\author{Tuomas Orponen}
\title{An improved bound on the packing dimension of Furstenberg sets in the plane}
\keywords{Furstenberg sets, projections, packing dimension}
\address{University of Helsinki, Department of Mathematics and Statistics}
\subjclass[2010]{28A80 (Primary)}
\thanks{T.O. is supported by the Academy of Finland through the grant Restricted families of projections and connections to
Kakeya type problems, grant number 274512.}
\email{tuomas.orponen@helsinki.fi}
\newcommand{\R}{\mathbb{R}}
\newcommand{\N}{\mathbb{N}}
\newcommand{\Z}{\mathbb{Z}}
\newcommand{\calT}{\mathcal{T}}
\newcommand{\calL}{\mathcal{L}}
\newcommand{\calD}{\mathcal{D}}
\newcommand{\calH}{\mathcal{H}}
\newcommand{\calB}{\mathcal{B}}
\newcommand{\calQ}{\mathcal{Q}}
\newcommand{\spt}{\operatorname{spt}}
\newcommand{\Hd}{\dim_{\mathrm{H}}}
\newcommand{\Pd}{\dim_{\mathrm{p}}}
\newcommand{\Bd}{\overline{\dim}\,}
\newcommand{\calP}{\mathcal{P}}
\newcommand{\diam}{\operatorname{diam}}
\newcommand{\dist}{\operatorname{dist}}
\numberwithin{equation}{section}
\theoremstyle{plain}
\newtheorem{thm}[equation]{Theorem}
\newtheorem{conjecture}[equation]{Conjecture}
\newtheorem{lemma}[equation]{Lemma}
\newtheorem{proposition}[equation]{Proposition}
\theoremstyle{definition}
\newtheorem{definition}[equation]{Definition}
\theoremstyle{remark}
\newtheorem{remark}[equation]{Remark}
\begin{document}

\begin{abstract} Let $0 \leq s \leq 1$. A set $K \subset \R^{2}$ is a Furstenberg $s$-set, if for every unit vector $e \in S^{1}$, some line $L_{e}$ parallel to $e$ satisfies
\begin{displaymath} \Hd [K \cap L_{e}] \geq s. \end{displaymath}
The Furstenberg set problem, introduced by T. Wolff in 1999, asks for the best lower bound for the dimension of Furstenberg $s$-sets. Wolff proved that $\Hd K \geq \max\{s + 1/2,2s\}$ and conjectured that $\Hd K \geq (1 + 3s)/2$. The only known improvement to Wolff's bound is due to Bourgain, who proved in 2003 that $\Hd K \geq 1 + \epsilon$ for Furstenberg $1/2$-sets $K$, where $\epsilon > 0$ is an absolute constant. In the present paper, I prove a similar $\epsilon$-improvement for all $1/2 < s < 1$, but only for packing dimension: $\Pd K \geq 2s + \epsilon$ for all Furstenberg $s$-sets $K \subset \R^{2}$, where $\epsilon > 0$ only depends on $s$.

The proof rests on a new incidence theorem for finite collections of planar points and tubes of width $\delta > 0$. As another corollary of this theorem, I obtain a small improvement for Kaufman's estimate from 1968 on the dimension of exceptional sets of orthogonal projections. Namely, I prove that if $K \subset \R^{2}$ is a linearly measurable set with positive length, and $1/2 < s < 1$, then
\begin{displaymath} \Hd \{e \in S^{1} : \Pd \pi_{e}(K) \leq s\} \leq s - \epsilon \end{displaymath}
for some $\epsilon > 0$ depending only on $s$. Here $\pi_{e}$ is the orthogonal projection onto the line spanned by $e$. 
\end{abstract} 

\maketitle

\tableofcontents

\section{Introduction}

This paper is concerned with two closely related topics in planar fractal geometry: the Furstenberg set problem, and exceptional sets of orthogonal projections. 
\subsection{Furstenberg sets} I start with the central definition:
\begin{definition}[Furstenberg sets]\label{furstenbergSets} Let $0 \leq s \leq 1$. A set $K \subset \R^{2}$ is a Furstenberg $s$-set, if there exists a set of unit vectors $S_{K} \subset S^{1}$ with positive length such that the following holds: for every $e \in S_{K}$, some line $L_{e}$ parallel to $e$ satisfies $\Hd [K \cap L_{e}] \geq s$. Here $\Hd$ is Hausdorff dimension. \end{definition}
The terminology was introduced in 1999 by T. Wolff \cite{Wo}, who proved the following lower bound for the Hausdorff dimension of Furstenberg sets:
\begin{thm}[Wolff's bound] The Hausdorff dimension of compact Furstenberg $s$-sets is at least $\max\{1/2 + s,2s\}$. \end{thm}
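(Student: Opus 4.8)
The plan is to treat separately the two bounds $\Hd K \geq 2s$ and $\Hd K \geq s+\tfrac12$ (the second being the stronger one precisely when $s \leq \tfrac12$), deriving each from a single-scale incidence inequality. To discretize, fix $0 < t < s$. Since $\Hd[K\cap L_e]\geq s$ forces $\mathcal{H}^{t}_{\infty}(K\cap L_e)>0$ for every $e\in S_K$, and $\mathcal{H}^{1}(S_K)>0$, a pigeonholing over $e\in S_K$ and over dyadic scales reduces matters to the following picture at a single scale $\delta$, where $\delta$ may be taken arbitrarily small: there is a $\delta$-separated set of $\gtrsim\delta^{-1}$ directions such that, for each one, a $\delta$-tube $T$ (the $\delta$-neighbourhood of a unit segment in that direction) has a shading $Y_T$ --- the collection of $\delta$-squares of $T$ meeting $K$ --- with $\#Y_T\geq\delta^{-t}$; and one must bound from below the number $N$ of $\delta$-squares needed to cover $K$. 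A standard argument, working with Hausdorff content $\mathcal{H}^{t'}_{\infty}$ for $t'<\beta$ so that polylogarithmic losses are harmless, then upgrades any single-scale bound $N\gtrsim\delta^{-\beta}(\log\tfrac1\delta)^{-A}$ to $\Hd K\geq\beta$.

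For $\beta=2s$ I would argue by Cauchy--Schwarz. The incidence count $I:=\sum_{T}\#Y_T$ satisfies $I\gtrsim\delta^{-1}\cdot\delta^{-t}=\delta^{-1-t}$, and, writing $I=\sum_q\#\{T:q\subseteq T\}$ with $q$ ranging over the $K$-squares, Cauchy--Schwarz in $q$ gives
\[
  I^{2}\;\leq\;N\sum_{T,T'}\#\{q:q\subseteq T\cap T'\}.
\]
The diagonal term is $\lesssim\delta^{-2}$, as each tube meets $\lesssim\delta^{-1}$ squares and there are $\lesssim\delta^{-1}$ tubes. For $T\neq T'$ with $\theta=\angle(T,T')$, the set $T\cap T'$ lies in a $\delta\times(\delta/\sin\theta)$ parallelogram and so meets $\lesssim 1/\sin\theta$ squares; since at most $\lesssim\sigma\delta^{-1}$ tubes lie within angle $\sigma$ of a fixed one, a dyadic summation in $\sigma=\sin\theta$ bounds the off-diagonal sum by $\lesssim\delta^{-2}\log\tfrac1\delta$. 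Hence $N\gtrsim\delta^{2}I^{2}/\log\tfrac1\delta\gtrsim\delta^{-2t}/\log\tfrac1\delta$, and letting $t\uparrow s$ yields $\Hd K\geq 2s$.

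For $\beta=s+\tfrac12$ the plain bush argument almost works but not quite. Take the $K$-square $q_0$ of maximal multiplicity $M$, so $M\gtrsim I/N\gtrsim\delta^{-1-t}/N$; the $\gtrsim M$ tubes through $q_0$ are $\delta$-separated in direction and therefore essentially disjoint away from $q_0$, but the shading of such a tube might be entirely concentrated near $q_0$, which defeats a direct count. The remedy, following Wolff \cite{Wo}, is a two-ends reduction: at the price of descending to a sub-configuration at a possibly larger dyadic scale, one may assume every shading is non-concentrated, $\#(Y_T\cap B(x,r))\lesssim r^{\eta}\,\#Y_T$ for all $x,r$ and some fixed $\eta>0$. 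Then a definite fraction of each of the $\gtrsim M$ shadings through $q_0$ lies outside a fixed small ball $B(q_0,c)$, and after passing to a $\sim\delta$-separated subfamily these tubes are pairwise disjoint there, so $N\gtrsim M\delta^{-t}$. Combined with $M\gtrsim\delta^{-1-t}/N$ this forces $N^{2}\gtrsim\delta^{-1-2t}$, i.e. $N\gtrsim\delta^{-1/2-t}$; letting $t\uparrow s$ gives $\Hd K\geq s+\tfrac12$, and hence $\Hd K\geq\max\{2s,s+\tfrac12\}$.

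I expect the two-ends reduction to be the main obstacle: the bush estimate itself is elementary, but the reduction requires an iterative rescaling of the configuration that must be performed so as to stay within the Furstenberg framework and to remain consistent with all the tubes and their shadings, and it is exactly this step that promotes the trivial bound $\Hd K\geq s$ to $s+\tfrac12$. The reduction to a single scale and the Cauchy--Schwarz estimate, by contrast, are routine.
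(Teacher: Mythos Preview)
The paper does not prove this theorem; it is quoted from Wolff \cite{Wo} as background, so there is no ``paper's own proof'' to compare against. Your Cauchy--Schwarz argument for the $2s$ bound is correct and is essentially the content of Lemma \ref{2sBound} here.

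For the $s+\tfrac12$ bound, however, you are making life harder than necessary, and the two-ends reduction is not what Wolff does. The point you are missing is that the hypothesis $\calH^{t}_{\infty}(K\cap L_{e})>0$ yields, via Frostman's lemma (Proposition \ref{deltasSet} in this paper), shadings $Y_{T}$ that are $(\delta,t,C)$-sets --- not merely sets of cardinality $\geq\delta^{-t}$. This built-in non-concentration is exactly the content of your two-ends condition, so no iterative reduction is needed: a $(\delta,t,C)$-set of size $\geq c\,\delta^{-t}$ has at least half its points outside any ball of radius $r_{0}=r_{0}(t,c,C)>0$. With this, your bush argument runs directly. Cleaner still is the pair-counting version: each tube carries $\gtrsim\delta^{-2t}$ pairs $(p,q)\in Y_{T}^{2}$ with $|p-q|\geq r_{0}$, any such pair determines the direction of $T$ up to $O(\delta)$ and hence lies in $O(1)$ tubes, and therefore $N^{2}\gtrsim\delta^{-1}\cdot\delta^{-2t}$, giving $N\gtrsim\delta^{-1/2-t}$.

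Your proposed two-ends reduction is also fragile in this setting: the standard rescaling is local to a single offending tube and its concentration point, and it is not clear how to carry the other $\delta^{-1}$ tubes and their shadings through the rescaling while preserving the Furstenberg structure. So the step you flag as ``the main obstacle'' is both unnecessary and, as you have stated it, not obviously available.
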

Wolff suspected that his bound is not sharp, and made the following conjecture:
\begin{conjecture}[Wolff's conjecture]\label{furstenbergConjecture} The Hausdorff dimension of compact Furstenberg $s$-sets is at least $(1 + 3s)/2$. \end{conjecture}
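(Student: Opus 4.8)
\medskip

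\noindent\emph{A proposed line of attack.}
The approach I would take is to $\delta$-discretise and reduce Conjecture~\ref{furstenbergConjecture} to a Szemer\'edi--Trotter-type incidence estimate for $\delta$-tubes. Concretely: \emph{(i)} fix Frostman measures of exponent $s$ on the fibres $K \cap L_{e}$ (for a positive-measure, hence dimension-one, set of directions $e$) and pigeonhole to produce, along a sequence of scales $\delta \to 0$, a set $P$ of $\#P = N(\delta)$ disjoint $\delta$-balls covering most of $K$ together with a family $\calT$ of $\#\calT \sim \delta^{-1}$ $\delta$-tubes, essentially one for each $\delta$-separated direction; \emph{(ii)} observe that the Furstenberg hypothesis, via the fibre Frostman measures, forces each $T \in \calT$ to meet $\gtrsim \delta^{-s}$ balls of $P$, so the number $\mathcal{I}(P,\calT)$ of incident pairs satisfies $\mathcal{I}(P,\calT) \gtrsim \delta^{-1-s}$; \emph{(iii)} prove the incidence bound
\begin{displaymath} \mathcal{I}(P,\calT) \lesssim_{\epsilon} \delta^{-\epsilon}\bigl( (\#P \cdot \#\calT)^{2/3} + \#P + \#\calT \bigr) \qquad (\epsilon > 0); \end{displaymath}
\emph{(iv)} combine: since $\#\calT \sim \delta^{-1} = o(\delta^{-1-s})$, the term $(\#P \cdot \#\calT)^{2/3}$ must dominate, which forces $\#P \gtrsim_{\epsilon} \delta^{-(1+3s)/2 + \epsilon}$ for every $\epsilon > 0$, and hence $\Hd K \geq (1+3s)/2$. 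In short, the conjecture is morally equivalent to a sharp Szemer\'edi--Trotter theorem for $\delta$-tubes --- that is, to the heuristic that a $\delta$-tube behaves like a line.

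All the content is in step \emph{(iii)}. The classical starting point is the $L^{2}$/Cauchy--Schwarz incidence argument: applying Cauchy--Schwarz to the tube-multiplicities $\#\{T \in \calT : p \in T\}$, $p \in P$, and using that $T \cap T'$ is contained in $\lesssim \angle(T,T')^{-1}$ balls of $P$, one recovers Wolff's bound $\Hd K \geq \max\{2s, s + \tfrac{1}{2}\}$. The deficit from the conjectured exponent is structural: these inequalities only see \emph{pairs} of tubes (or points) and are saturated by model configurations in which $P$ is a $\delta$-net of a $\max\{2s, s+\tfrac12\}$-dimensional product set. To improve them one must either run a multiscale argument --- decomposing $P$ into ``spread'' pieces, handled by projection/$L^{2}$ theory, and ``concentrated'' pieces, handled by induction on scales --- or import the discretised sum--product / ring machinery underlying Bourgain's $\epsilon$-improvement at $s = \tfrac12$, where the gain comes from additive combinatorics ruling out the extremal configurations. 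With current technology only $\epsilon$-improvements over Wolff's bound seem reachable, and --- as the present paper demonstrates --- even these demand a genuinely new incidence input, here at the price of working with packing dimension.

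The main obstacle is step \emph{(iii)}, and the reason the full conjecture remains open is that $\delta$-tubes honestly fail the clean Szemer\'edi--Trotter bound: two $\delta$-balls at distance $\rho$ lie in $\sim \max\{1,\rho^{-1}\}$ of the $\delta$-separated tubes, so clustering of $P$ at intermediate scales $\delta \ll \rho \ll 1$ manufactures incidences that the point--line heuristic cannot detect. Any successful argument must therefore quantitatively rule out the near-extremal ``grid-like'' configurations --- roughly, $P$ looking like a product of a one-dimensional set along the tube directions with a very thin transversal set --- and, crucially, control the interaction between the (possibly many) scales at which such concentration can occur. It is precisely this gap that the incidence theorem announced in the abstract manages to narrow, at the cost of passing from Hausdorff to packing dimension and only beyond the threshold $2s$.
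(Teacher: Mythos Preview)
The statement is a \emph{conjecture}, not a theorem: the paper does not prove it, and indeed explicitly records that Wolff's bound $\max\{2s, s+\tfrac12\}$ remains the state of the art for Hausdorff dimension away from $s=\tfrac12$. There is therefore no ``paper's own proof'' to compare your proposal against.

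Your write-up is also not a proof, and you say so yourself: step \emph{(iii)}, the Szemer\'edi--Trotter bound
\[
\mathcal{I}(P,\calT) \lesssim_{\epsilon} \delta^{-\epsilon}\bigl( (\#P \cdot \#\calT)^{2/3} + \#P + \#\calT \bigr),
\]
is false for $\delta$-tubes without additional non-concentration hypotheses on $P$ (the standard counterexample being $P$ a $\delta$-net in a thin rectangle), and you correctly identify this as the obstruction. So what you have written is a heuristic reformulation of the conjecture --- ``Wolff's conjecture is equivalent to a sharp $\delta$-tube Szemer\'edi--Trotter theorem'' --- together with an accurate diagnosis of why that reformulation does not immediately help. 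This is a reasonable summary of the landscape, but it is not a proof attempt in any meaningful sense, and there is no gap to name beyond the one you already flag. The paper's actual contribution (Theorem \ref{mainIntro}) is a much weaker incidence statement than your step \emph{(iii)}, proved under the extra two-scale hypothesis $N(P,\delta^{1/2}) \leq \delta^{-1/2-\epsilon}$, which is why it only yields an $\epsilon$-improvement for packing dimension rather than the full conjecture.
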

\begin{remark} Where does the name "Furstenberg set" come from? In 1970, H. Furstenberg \cite{Fu} proved the following theorem. Assume that $p,q \in \N$ are integers such that $\log p/\log q$ is irrational. Assume that $A_{p},A_{q} \subset [0,1]$ are closed sets invariant under $x \mapsto px (\operatorname{mod} 1)$ and $x \mapsto qx (\operatorname{mod} 1)$, respectively. Let $0 \leq s \leq 1$, and assume that some line $L \subset \R^{2}$ satisfies $\Hd [L \cap (A_{p} \times A_{q})] \geq s$. Then $A_{p} \times A_{q}$ is a Furstenberg $s$-set.  

Furstenberg was interested in the problem: how do lines intersect product sets of the form $A_{p} \times A_{q}$? He conjectured that 
\begin{equation}\label{FurstenbergsConjecture} \Hd [L \cap (A_{p} \times A_{q})] \leq \max\{0,\Hd (A_{p} \times A_{q}) - 1\} \end{equation}
for every line $L \subset \R^{2}$. Keeping in mind Furstenberg's theorem cited above, the upper bound \eqref{FurstenbergsConjecture} would evidently follow, if only one could prove the lower bound $\Hd K \geq s + 1$ for all Furstenberg $s$-sets. However, the estimate $\Hd K \geq s + 1$ is too optimistic in general: as shown by Wolff \cite{Wo}, Conjecture \ref{furstenbergConjecture} is the strongest possible for general Furstenberg sets. In other words, the sets in Definition \ref{furstenbergSets} are too general to help solve Furstenberg's conjecture \eqref{FurstenbergsConjecture} for the special sets of the form $A_{p} \times A_{q}$. 

Fortunately, the services of general Furstenberg sets are no longer required for this purpose: only this year, Furstenberg's conjecture \eqref{FurstenbergsConjecture} was spectacularly verified (independently) by P. Shmerkin \cite{Sh} and M. Wu \cite{Wu}, with two very different techniques! \end{remark}

For general sets, progress in Conjecture \ref{furstenbergConjecture} has been quite modest. Around the year 2000, Katz and Tao \cite{KT} observed that improving Wolff's bound for Furstenberg $\tfrac{1}{2}$-sets is roughly equivalent to proving a "$\delta$-discretised" sum-product theorem in $\R$. The latter task was then accomplished in 2003 by Bourgain \cite{Bo1}. Hence, the combined efforts of Katz-Tao and Bourgain give the following improvement to Wolff's bound:
\begin{thm}[Bourgain, Katz-Tao] There is an absolute constant $\epsilon > 0$ such that the Hausdorff dimension of compact Furstenberg $\tfrac{1}{2}$-sets is at least $1 + \epsilon$. \end{thm}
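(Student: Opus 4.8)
The plan is to follow the Katz--Tao reduction of the problem to Bourgain's discretised sum--product theorem, arguing by contradiction. Let $\epsilon_0 > 0$ be the absolute constant supplied by Bourgain's theorem and fix $\epsilon := c\,\epsilon_0$ with $c > 0$ a small absolute constant to be chosen at the end; suppose, towards a contradiction, that some compact Furstenberg $\tfrac12$-set $K$ satisfies $\Hd K \le 1 + \epsilon$. By a standard multi-scale pigeonholing (the one underlying Wolff's proof of his bound), one reduces at a well-chosen small scale $\delta$ to the following discrete configuration: a family $\mathcal P$ of $\lesssim \delta^{-1-\epsilon}$ dyadic $\delta$-squares covering a ``rich'' subset of $K$, a set of $\gtrsim \delta^{-1}$ many $\delta$-separated directions, and for each such direction $e$ a $\delta$-tube $T_e$ that contains $\gtrsim \delta^{-1/2+\eta}$ squares of $\mathcal P$, non-concentrated along $T_e$ in the sense that $\#\{Q \in \mathcal P : Q \subset T_e \cap B(x,r)\} \lesssim \delta^{-\eta}(r/\delta)^{1/2-\eta}$ for all $x$ and all $\delta \le r \le 1$; the loss $\eta \ll \epsilon$ comes from invoking Frostman's lemma on the $\tfrac12$-dimensional sets $K \cap L_e$ at dimension $\tfrac12 - \eta$. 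Reparametrising directions by slopes gives a slope set $\mathcal A \subset [1,2]$ with $\#\mathcal A \gtrsim \delta^{-1}$.

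\textbf{Wolff's estimate is nearly sharp.} Set $t(Q) := \#\{a \in \mathcal A : Q \subset T_a\}$. Two $\delta$-tubes whose slopes differ by $\sim 2^{-j} \ge \delta$ share $\lesssim 2^{j}$ squares of $\mathcal P$, and there are $\lesssim 2^{-j}\delta^{-2}$ such pairs, so $\sum_Q \binom{t(Q)}{2} \lesssim \delta^{-2}\log(1/\delta)$; together with $\sum_Q t(Q) \gtrsim \delta^{-3/2+\eta}$ and Cauchy--Schwarz this gives $\#\mathcal P \gtrsim \delta^{-1+2\eta}/\log(1/\delta)$, which at $s = \tfrac12$ is Wolff's bound. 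Since also $\#\mathcal P \lesssim \delta^{-1-\epsilon}$, we are within a factor $\delta^{-C\epsilon}$ of equality throughout, hence near-tight both in the Cauchy--Schwarz inequality and in the geometric incidence bound: $t(Q)$ is essentially constant over the bulk of $\mathcal P$, and for most scales $2^{-j}$ most pairs of rich lines at angle $\sim 2^{-j}$ meet in nearly the maximal number $\sim 2^{j}$ of squares of $\mathcal P$.

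\textbf{Extracting arithmetic structure.} This rigidity says the incidence graph between $\mathcal P$ and $\mathcal A$ is almost complete on a large sub-configuration, so, after discarding light and heavy fibres and feeding the graph into a $\delta$-discretised Balog--Szemer\'edi--Gowers theorem, one passes to a refinement in which $\mathcal P$ has an approximate product shape --- morally $\mathcal P \subset X \times Y$ up to thickening --- with $X, Y \subset \R$ non-concentrated of dimension $\approx \tfrac12$, with $\gtrsim \delta^{-1+C\epsilon}$ surviving rich slopes, and with intercepts $b = y - ax$ forming a non-concentrated set of dimension $\approx \tfrac12$. Restricted to this sub-configuration the relation $y = ax + b$ linearises, and (after several applications of the Pl\"unnecke--Ruzsa inequalities to move between sum, difference, product and quotient sets) one arrives at a $\delta$-discretised, non-concentrated set $X \subset \R$ of dimension $\approx \tfrac12$ with both $\mathcal N(X+X,\delta) \lesssim \delta^{-1/2-C\epsilon}$ and $\mathcal N(X\cdot X,\delta) \lesssim \delta^{-1/2-C\epsilon}$: the small sumset reflects the near-product shape of $\mathcal P$, i.e. the near-sharpness of Wolff's bound, and the small product set reflects the abundance of distinct rich slopes.

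\textbf{Contradiction, and the main obstacle.} Since $\tfrac12 \in (0,1)$, Bourgain's discretised sum--product theorem applies to $X$ and gives $\mathcal N(X+X,\delta) + \mathcal N(X\cdot X,\delta) \ge \delta^{-\epsilon_0}\,\mathcal N(X,\delta) \gtrsim \delta^{-1/2-\epsilon_0+\eta}$. Choosing $c$ small enough that $C\epsilon < \epsilon_0/2$ contradicts the two bounds of the previous step (for all sufficiently small $\delta$), which proves the theorem. The analytic heart of the argument is Bourgain's theorem, whose proof is an intricate induction on scales repeatedly calling on Balog--Szemer\'edi--Gowers and Pl\"unnecke--Ruzsa, and it is the reason $\epsilon$ cannot be made explicit. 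The second and more bookkeeping-heavy difficulty is the extraction step: turning ``Wolff's inequality is almost an equality'' into genuine product structure that still carries usable Frostman exponents requires a carefully quantified $\delta$-discretised Balog--Szemer\'edi--Gowers theorem, and one must track the non-concentration hypotheses through every pigeonholing --- they are indispensable, since an arithmetic progression or a near-subring at scale $\delta$ would satisfy every combinatorial conclusion above while contradicting nothing.
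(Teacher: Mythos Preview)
The paper does not actually prove this theorem: it is stated in the introduction purely as background, with the proof attributed to the combined work of Katz--Tao \cite{KT} (the reduction) and Bourgain \cite{Bo1} (the discretised sum--product theorem). So there is no ``paper's own proof'' to compare against.

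That said, your outline is faithful to the strategy in the literature: discretise, observe that the counter-assumption forces Wolff's $L^2$ incidence argument to be nearly sharp, exploit that rigidity to extract an approximate product structure, and then feed the resulting set into Bourgain's sum--product theorem. Two remarks on accuracy. First, the ``extracting arithmetic structure'' step is where the real work lies, and your description is more impressionistic than the actual Katz--Tao argument: they do not quite pass through a single Balog--Szemer\'edi--Gowers application to obtain a product $X\times Y$, but rather set up a chain of reductions showing that a counterexample to the Furstenberg $\tfrac12$-bound yields a counterexample to the discretised ring conjecture (a $(\delta,\tfrac12)$-set $A$ with $N(A+A,\delta)$ and $N(A\cdot A,\delta)$ both $\lessapprox \delta^{-1/2}$). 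The line $y=ax+b$ is indeed the source of the sum--product relation, but deriving \emph{simultaneous} control on a sumset and a product set of the \emph{same} set requires more than one pigeonholing pass and is not an immediate consequence of near-tightness in Cauchy--Schwarz. Second, your final paragraph correctly identifies the two genuine difficulties --- Bourgain's theorem itself, and the bookkeeping needed to preserve non-concentration through the refinements --- so as a plan with named obstacles the proposal is sound, even if the middle step would need substantial expansion to become a proof.
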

Of course, the theorem also gives an improvement for Furstenberg $s$-sets with $s$ very close to $\tfrac{1}{2}$, but, to the best of my knowledge, Wolff's bound remains the world record for other values of $s \in (0,1)$. The main purpose of this paper is to prove a Bourgain-Katz-Tao type $\epsilon$-improvement to Wolff's bound for all values $\tfrac{1}{2} < s < 1$. As a notable caveat, the method only works for packing dimension:
\begin{thm}\label{mainFurstenberg} For $\tfrac{1}{2} < s < 1$, there exists a constant $\epsilon = \epsilon(s) > 0$ such that every Furstenberg $s$-set $K$ has packing dimension $\Pd K \geq 2s + \epsilon$. \end{thm}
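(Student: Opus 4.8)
\emph{Strategy and reduction to a single scale.} The plan is to reduce Theorem~\ref{mainFurstenberg} to a single-scale incidence inequality between points and $\delta$-tubes, and then to prove that inequality. Suppose, toward a contradiction, that $\Pd K < 2s + \epsilon$ for some $\epsilon = \epsilon(s) > 0$ to be fixed. By the countable stability of packing dimension --- equivalently, the identity $\Pd K = \inf\{\sup_j \Bdim K_j : K = \bigcup_j K_j\}$ --- we may write $K$ as a countable union of bounded closed sets $K_j$ with $\Bdim K_j < 2s + \epsilon$ (box dimension is invariant under closure). Fix $\sigma \in (\tfrac{1}{2}, s)$. For each $e \in S_K$ we have $\Hd(K \cap L_e) \geq s > \sigma$, so by countable stability of $\Hd$ some index $j = j(e)$ satisfies $\Hd(K_j \cap L_e) > \sigma$; since $S_K$ has positive length and outer measure is countably subadditive, a single piece $E := K_{j_0}$ satisfies $\Hd(E \cap L_e) > \sigma$ for all $e$ in a subset $S' \subseteq S^1$ of positive length. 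Thus $E$ is a bounded, closed Furstenberg $\sigma$-set, while $\Bdim E < 2s + \epsilon$ supplies a uniform covering bound $N(E, \delta) \leq \delta^{-(2s+\epsilon)}$ for all $0 < \delta < \delta_0(E)$, where $N(\cdot, \delta)$ denotes the $\delta$-covering number. It now suffices to contradict the existence of such a set $E$ once $\epsilon = \epsilon(s)$ is chosen small enough.

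\emph{A point--tube configuration at scale $\delta$.} Fix $\delta < \delta_0(E)$. For $e \in S'$ one has $\mathcal{H}^\sigma(E \cap L_e) = \infty$, so by Frostman's lemma $E \cap L_e$ carries a probability measure $\mu_e$ with $\mu_e(B(x,r)) \le C_e r^\sigma$; dyadic-pigeonholing the value of $C_e$ and shrinking $S'$ allows us to take $C_e \le C_0$ uniformly. Discretising $\mu_e$ at scale $\delta$: the $\delta$-intervals $J \subset L_e$ with $\mu_e(J) \ge \delta^\sigma$ number $\gtrsim \delta^{-\sigma}$, each meets $E$, and they form a set that is $\sigma$-regular at scale $\delta$ (any interval of length $\ell$ contains $\lesssim (\ell/\delta)^\sigma$ of them). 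Letting $e$ range over a maximal $\delta$-separated subset $\mathcal{N}_\delta \subseteq S'$, so that $|\mathcal{N}_\delta| \gtrsim \delta^{-1}$ since $S'$ has positive length, we obtain a family of $\delta$-tubes $\{T_e\}_{e \in \mathcal{N}_\delta}$ with $\delta$-separated directions, each $T_e$ meeting $E$ in $\gtrsim \delta^{-\sigma}$ points which form a set that is $\sigma$-regular at scale $\delta$. Replacing these points by nearby points of a fixed maximal $\delta$-separated set $P \subseteq E$ costs only bounded multiplicative constants, and $|P| \le N(E, \delta) \le \delta^{-(2s+\epsilon)}$.

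\emph{The incidence theorem and the contradiction.} I would establish an incidence theorem of the following shape: for every $0 < \sigma < 1$ there is $\epsilon_0 = \epsilon_0(\sigma) > 0$, bounded away from $0$ in a neighbourhood of any fixed value of $\sigma$, such that for all small $\delta > 0$ the following holds. If $P \subseteq [0,1]^2$ is $\delta$-separated, and there exist $\gtrsim \delta^{-1}$ many $\delta$-tubes with $\delta$-separated directions, each meeting $P$ in a set of $\gtrsim \delta^{-\sigma}$ points which is $\sigma$-regular at scale $\delta$, then $|P| \gtrsim \delta^{-2\sigma - \epsilon_0}$. The exponent $2\sigma$ is what a Cauchy--Schwarz/C\'ordoba count delivers --- two $\delta$-tubes meeting at angle $\theta$ intersect $P$ in $\lesssim \theta^{-\sigma}$ points, and $\sum_{T' \neq T} \theta(T, T')^{-\sigma} \lesssim_\sigma \delta^{-1}$ over the $\delta$-separated directions --- so the theorem is a genuine $\epsilon$-improvement over the discretised $2\sigma$-bound. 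Applying it to the configuration of the previous paragraph gives $\delta^{-(2s+\epsilon)} \geq |P| \gtrsim \delta^{-2\sigma - \epsilon_0(\sigma)}$ for all small $\delta$, whence $2s + \epsilon \geq 2\sigma + \epsilon_0(\sigma)$. Now choose $\sigma$ close enough to $s$ that both $2(s - \sigma) < \tfrac{1}{4}\epsilon_0(s)$ and $\epsilon_0(\sigma) \geq \tfrac{1}{2}\epsilon_0(s)$, and set $\epsilon := \tfrac{1}{4}\epsilon_0(s)$; then $2\sigma + \epsilon_0(\sigma) > 2s + \epsilon$, contradicting the inequality just derived. Hence $\Pd K \geq 2s + \epsilon(s)$.

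\emph{The main obstacle.} The substance of the proof is the incidence theorem --- improving the exponent $2\sigma$ by a fixed amount. I would attack it by an induction-on-scales bootstrap with Bourgain's $\delta$-discretised sum--product theorem \cite{Bo1} (equivalently, the associated discretised projection theorem) as the nonlinear engine: a configuration nearly saturating the C\'ordoba bound is forced, at essentially every intermediate scale, to carry rigid multiplicative--additive structure adapted to the tube directions, and the sum--product phenomenon rules out such structure persisting through all scales; quantifying the gain costs one scale, and one iterates. This is precisely where the restriction to packing dimension is essential: the incidence theorem is a single-scale statement, and $\Pd K < 2s + \epsilon$ --- via the box-dimension decomposition --- delivers exactly the single-scale hypothesis $N(E, \delta) \leq \delta^{-(2s+\epsilon)}$ valid for all small $\delta$, whereas $\Hd K < 2s + \epsilon$ would only constrain coverings by balls of widely varying radii, which the argument cannot exploit. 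The remaining points --- non-measurability of the direction set $S'$, the non-attainment of suprema in countable stability (absorbed into the loss from $s$ to $\sigma$), and the pigeonholing needed to make all constants and exponents uniform --- are routine.
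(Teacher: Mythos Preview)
Your reduction to upper box dimension of a compact Furstenberg $\sigma$-set $E$ is essentially the paper's, but the incidence theorem you propose is too strong and is not what the paper proves. A single-scale statement of the form ``$\delta^{-1}$ tubes with $\delta$-separated directions, each meeting a $\delta$-separated set $P$ in a $\sigma$-regular set of $\gtrsim \delta^{-\sigma}$ points, forces $|P|\gtrsim\delta^{-2\sigma-\epsilon_0}$'' would, applied at every small $\delta$, give a \emph{lower} Minkowski dimension bound $\underline{\dim}_{\mathrm{B}}\,E\geq 2\sigma+\epsilon_0$ for compact Furstenberg sets. The paper states explicitly that its method does not yield this (see the discussion after Theorem~\ref{mainFurstenberg}), and in the remark following the incidence theorem the author records your statement (in its point-line dual form) as an open conjecture. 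Your sketch ``induction on scales plus Bourgain's sum-product'' does not separate this from the unresolved problem; the sum-product input needs a non-concentration hypothesis on the ambient point configuration that a bare single-scale count does not supply.

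What the paper actually proves is a genuinely \emph{two-scale} incidence theorem: in the dual picture, one has a $(\delta,1)$-set $P$ of cardinality $\approx\delta^{-1}$ together with the extra hypothesis $N(P,\delta^{1/2})\lessapprox\delta^{-1/2}$, and the conclusion is a dichotomy --- either $|\calT|\geq\delta^{-2s-\epsilon}$ or $N(\calT,\delta^{1/2})\geq\delta^{-s-\epsilon}$. Both alternatives are ruled out because the counter assumption $\Bd K_F<2s+\epsilon$ gives $N(K_F,\rho)\leq\rho^{-2s-\epsilon}$ at $\rho=\delta$ \emph{and} at $\rho=\delta^{1/2}$ simultaneously. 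This is precisely where packing dimension enters, and your diagnosis of its role is inverted: it is not that the incidence theorem is single-scale and box dimension delivers a single-scale bound; it is that the incidence argument \emph{requires} control at two coupled scales, and only an upper-box (hence packing) counter assumption provides it. The proof then exploits the $\delta^{1/2}$-structure to locate a heavy $\delta^{1/2}$-tube, rescale its contents to a ``quasi-product'' configuration at scale $\delta^{1/2}$, and only there --- with the additional product structure in hand --- does Bourgain's discretised projection theorem apply to finish.
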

The foremost reason, why the proof of Theorem \ref{mainFurstenberg} does not give information about Hausdorff dimension -- or even lower Minkowski dimension -- is that it relies on the counter assumption $\Pd K \approx 2s$, which gives information about $K$ on two different scales, namely $\delta$ and $\delta^{1/2}$. Assuming $\Hd K \approx 2s$ does not have similar consequences. For a reader familiar with Besicovitch sets, I mention that a similar issue seems to stand in the way of improving Wolff's bound $\tfrac{5}{2}$ for the Hausdorff dimension of Besicovitch sets in $\R^{3}$: the improved lower bound $\tfrac{5}{2} + \epsilon$ from 2000 by Katz, \L aba and Tao \cite{KLT} is only known for upper Minkowski dimension. (Addendum to a second version of the paper: in April 2017, Katz and Zahl \cite{KZ} posted on arXiv a proof that the Hausdorff dimension of Besicovitch sets in $\R^{3}$ is at least $\tfrac{5}{2} + \epsilon$.)

Finally, I mention that several papers have been written around Wolff's conjecture \ref{furstenbergConjecture} in the past few years. An article of Zhang \cite{Zh} completely solves a discrete variant of the conjecture, plus its analogues in higher dimensions. Zhang also studied a variant of the problem in finite fields \cite{Zh2}. Most recently, Ellenberg and Erman \cite{EE} used machinery from algebraic geometry to study a "$k$-plane" variant of Wolff's conjecture in finite fields.

\subsection{Projections} The second topic of the paper are orthogonal projections. This is one of the most classical -- and popular -- topics in fractal geometry, so the amount of literature is immense: for a reader interested in finding out (much) more than covered below, I suggest taking a look at the recent survey of Fraser, Falconer and Jin \cite{FFJ}. 

Fix $0 \leq s \leq 1$, and let $K \subset \R^{2}$ be a Borel set of Hausdorff dimension $\Hd K \geq s$. In 1968, Kaufman \cite{Ka} proved, improving an earlier result of Marstrand \cite{Ma} from 1954, that
\begin{equation}\label{kaufman} \Hd \{e \in S^{1} : \Hd \pi_{e}(K) < s\} \leq s. \end{equation}
Here $\pi_{e} \colon \R^{2} \to \R$ is the orthogonal projection $\pi_{e}(x) = x \cdot e$. Under the assumption $\Hd K \geq s$, Kaufman's bound \eqref{kaufman} is sharp: in 1975, Kaufman and Mattila \cite{KM} constructed explicit compact sets $K \subset \R^{2}$ with $\Hd K = s$ such that
\begin{equation}\label{sharpness} \Hd \{e : \Hd \pi_{e}(K) < s\} = s. \end{equation}
Under the assumption $\Hd K \geq t > s$, the sharpness of \eqref{kaufman} is an open problem. The following improvement is conjectured (in (1.8) of \cite{Ob}, for instance):
\begin{conjecture}\label{mainC} Assume that $0 \leq t/2 \leq s \leq t \leq 1$ and $\Hd K \geq t$. Then
\begin{equation}\label{mainC2} \Hd \{e \in S^{1} : \Hd \pi_{e}(K) < s\} \leq 2s - t. \end{equation}
\end{conjecture}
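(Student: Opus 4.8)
The plan is to $\delta$-discretise and reduce the estimate to a Szemer\'edi--Trotter type bound for planar points and $\delta$-tubes --- the same strategy underlying the $\epsilon$-improvements of this paper. I would argue by contradiction: assume $\Hd K \geq t$ but $\Hd E > 2s - t$, where $E := \{e \in S^{1} : \Hd \pi_{e}(K) < s\}$. By Frostman's lemma, fix a measure $\mu$ on $K$ with $\mu(B(x,r)) \lesssim r^{t}$ and a measure $\nu$ on $E$ with $\nu(B(e,r)) \lesssim r^{u}$ for a fixed $u > 2s - t$. For each $e \in E$ one has $\cH^{s}(\pi_{e}(K)) = 0$; dyadically grouping a cover of $\pi_{e}(K)$ witnessing this and pigeonholing --- first over scales, then over a positive-$\nu$-measure set of directions --- I would extract a single scale $\delta$ and a $\delta$-discretised configuration: a family $\calP$ of $\approx \delta^{-t}$ essentially disjoint $\delta$-squares (a $t$-dimensional set, carrying near-full $\mu$-mass) together with a family $\Theta$ of $\approx \delta^{-u}$ many $\delta$-separated directions (a $u$-dimensional set) such that for each $\theta \in \Theta$ the union $\bigcup \calP$ is covered by a family $\calT_{\theta}$ of $\lesssim \delta^{-s}$ tubes of width $\delta$ in direction $\theta$.

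\textbf{Applying the incidence bound.} Next I would set $\calT := \bigcup_{\theta \in \Theta}\calT_{\theta}$, so $|\calT| \lesssim \delta^{-u-s}$, and count the incidences $\calI := \{(p,T) : p \in \calP,\ T \in \calT,\ p \cap T \neq \emptyset\}$. Since every $p \in \calP$ meets a tube of $\calT_{\theta}$ for each of the $\approx \delta^{-u}$ directions $\theta$, one has the lower bound $|\calI| \gtrsim |\calP|\,\delta^{-u} = \delta^{-t-u}$. On the other hand, a Szemer\'edi--Trotter type bound for $\delta$-tubes would give $|\calI| \lesssim_{\epsilon} \delta^{-\epsilon}\bigl((|\calP|\,|\calT|)^{2/3} + |\calP| + |\calT|\bigr) \lesssim \delta^{-\epsilon}\bigl(\delta^{-2(t+u+s)/3} + \delta^{-t} + \delta^{-u-s}\bigr)$. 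In the range $t/2 \leq s < t$ the last two terms are strictly lower order than $\delta^{-t-u}$ (as $u > 2s-t \geq 0$ and $s < t$), so the term $\delta^{-2(t+u+s)/3}$ must dominate; but $\delta^{-\epsilon}\delta^{-2(t+u+s)/3} \ll \delta^{-t-u}$ exactly when $\epsilon < \tfrac{1}{3}(u+t-2s)$, a positive quantity by the assumption $u > 2s-t$. Taking $\epsilon$ small relative to the gap $u-(2s-t)$ produces a contradiction, so $\Hd E \leq 2s-t$; the endpoint case $s=t$ is already contained in Kaufman's bound \eqref{kaufman}.

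\textbf{The main obstacle.} The above is only a heuristic, because the Szemer\'edi--Trotter bound is genuinely false for arbitrary $\delta$-tubes: a packet of nearly parallel tubes confined to a single $\rho$-tube ($\delta < \rho < 1$) crowds its incidences into one $\rho$-square and destroys the $(|\calP|\,|\calT|)^{2/3}$ estimate. The real work is therefore to establish a \emph{multiscale non-concentration} property of the pair $(\calP,\calT)$: at every scale $\rho$, no $\rho$-square should contain more than a $\rho$-power's worth of $\calP$ --- immediate from the $t$-Frostman property of $\calP$ --- and no $\rho$-tube should contain more than a $\rho$-power's worth of $\calT$. Establishing the latter requires combining the $u$-Frostman property of the direction set $\Theta$ with the per-direction bound $|\calT_{\theta}| \lesssim \delta^{-s}$ in a quantitatively sharp way, and it is precisely here that the incidence theorem proved in this paper --- strong enough for the $\epsilon$-improvements stated in the introduction --- falls short of the exponent $2s-t$. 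A second, intertwined difficulty is specific to Hausdorff dimension: to obtain a statement about $\Hd$ rather than packing or box dimension one cannot really freeze a single scale $\delta$ at all, but must run the incidence analysis over all scales simultaneously, via an energy-integral formulation or a tree decomposition of $K$ and $E$, as the discussion following Theorem \ref{mainFurstenberg} explains. Pushing the non-concentration analysis through with the full strength of the hypothesis $s \geq t/2$ --- conceivably by an induction on scales that exploits the structure of $K$ at the intermediate scale $\delta^{1/2}$ in the spirit of Theorem \ref{mainFurstenberg} --- is what I expect to be the principal obstacle; a plausible alternative is to transfer $(\calP,\calT)$ to a dual Furstenberg set problem and invoke a sharp Furstenberg estimate in the relevant range, but such an estimate appears to be of comparable depth.
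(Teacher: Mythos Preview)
The statement you are addressing is \emph{Conjecture}~\ref{mainC}: the paper does not prove it, and indeed presents it as an open problem (attributed to \cite{Ob}). There is therefore no ``paper's own proof'' to compare your proposal against. What the paper actually establishes is the much weaker Theorem~\ref{mainProjections}, an $\epsilon$-improvement over Kaufman's bound for packing dimension in the special case $t=1$, and even that requires the full machinery of Theorem~\ref{mainIntro} together with Bourgain's discretised projection theorem.

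Your write-up is not a proof but a well-articulated heuristic, and you are candid about this: the Szemer\'edi--Trotter bound $|\calI| \lesssim_{\epsilon} \delta^{-\epsilon}(|\calP|\,|\calT|)^{2/3}$ that your argument hinges on is simply not true for $\delta$-tubes without strong non-concentration hypotheses, and establishing those hypotheses with the sharp exponents is essentially equivalent to the conjecture itself. You correctly locate the two genuine obstructions --- the failure of tube non-concentration at intermediate scales, and the impossibility of freezing a single scale when the target is Hausdorff (rather than packing or box) dimension --- and these are precisely the reasons the conjecture remained open at the time of the paper. So there is no error to flag beyond the fact that what you have written is a discussion of obstacles rather than a proof; but since the paper itself offers no proof of this statement, that is the appropriate posture.
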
 

It is well-known that there is a connection between the case $t = 1$ of Conjecture \ref{mainC} and Wolff's conjecture \ref{furstenbergConjecture} for Furstenberg sets. As observed in 2012 by D. Oberlin \cite{Ob2}, an improvement to Conjecture \ref{mainC} immediately gives an improvement to Wolff's bound for Furstenberg sets arising from a special -- but rather natural -- construction. As far as I know, there is no published evidence of a converse, but it seems very likely that progress in Wolff's conjecture \ref{furstenbergConjecture} would also lead to progress in Conjecture \ref{mainC}. 

I now concentrate on the case $t = 1$. If $0 \leq s \leq 1$ and $K \subset \R^{2}$ is a Borel set with $\Hd K \geq 1$, then $\Hd K \geq s$, and \eqref{kaufman} holds for $K$. Curiously, it appears to be very difficult to capitalise on the stronger assumption $\Hd K \geq 1$, and beat the estimate \eqref{kaufman}. In fact, the only known improvement to Kaufman's bound \eqref{kaufman} follows -- once again -- from Bourgain's discretised sum-product theorem. The next theorem appeared in another paper of Bourgain \cite{Bo} from 2010:
\begin{thm}[Bourgain]\label{bourgain} Given $\kappa > 0$, there exists $\eta > \tfrac{1}{2}$ such that the following holds. If $K \subset \R^{2}$ is a Borel set with $\Hd K \geq 1$, then $\Hd \pi_{e}(K) \geq \eta$ for all $e \in S^{1} \setminus E$, where $E \subset S^{1}$ is an exceptional set of Hausdorff dimension $\Hd E \leq \kappa$. In particular,
\begin{displaymath} \Hd \{e \in S^{1} : \Hd \pi_{e}(K) \leq s\} \to 0, \quad \text{as } s \searrow \tfrac{1}{2}. \end{displaymath}
\end{thm}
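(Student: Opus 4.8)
The plan is to argue by contradiction and to reduce the statement of Theorem \ref{bourgain} to Bourgain's discretised sum--product theorem \cite{Bo1}, in the spirit of the Katz--Tao correspondence \cite{KT}. It is enough to show: for every $\kappa \in (0,1)$ there is $\epsilon = \epsilon(\kappa) > 0$ with $\Hd \{e \in S^{1} : \Hd \pi_{e}(K) \leq \tfrac{1}{2} + \epsilon\} \leq \kappa$, since then $\eta := \tfrac{1}{2} + \epsilon$ satisfies the conclusion, and the limiting assertion $\Hd \{e : \Hd \pi_{e}(K) \leq s\} \to 0$ as $s \searrow \tfrac{1}{2}$ follows upon letting $\kappa \searrow 0$. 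Fix $\kappa$, let $\epsilon > 0$ be a small parameter to be pinned down at the end, and suppose for contradiction that $E := \{e : \Hd \pi_{e}(K) \leq \tfrac{1}{2} + \epsilon\}$ has $\Hd E > \kappa$.

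The first move is to pass to Frostman measures and to \emph{product-ify} the configuration by means of two transverse projections. Since $\Hd K \geq 1$, Frostman's lemma provides, for every small $\tau > 0$, a compactly supported probability measure $\mu$ with $\spt \mu \subset K$ and $\mu(B(x,r)) \lesssim r^{1 - \tau}$, and since $\Hd E > \kappa$ it provides a probability measure $\nu$ on $E$ with $\nu(B(e,r)) \lesssim r^{\kappa}$. As $E$ is uncountable it contains directions $e_{1} \neq \pm e_{2}$, so $\Phi := (\pi_{e_{1}}, \pi_{e_{2}}) \colon \R^{2} \to \R^{2}$ is a linear isomorphism. Put $\tilde{\mu} := \Phi_{\#}\mu$ and $\tilde{K} := \Phi(\spt \mu)$. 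Then $\tilde{\mu}(B(x,r)) \lesssim r^{1 - \tau}$ with the same exponent, $\spt \tilde{\mu} \subset \tilde{K}$, and $\tilde{K} \subset K_{1} \times K_{2}$, where $K_{i} := \pi_{e_{i}}(K)$ has $\Hd K_{i} \leq \tfrac{1}{2} + \epsilon$ because $e_{i} \in E$. Moreover each $\pi_{e}$ factors as $\pi_{e} = \ell_{e} \circ \Phi$ for a linear functional $\ell_{e}(u,v) = a_{e}u + b_{e}v$, so that $\ell_{e}(\tilde{K}) = \pi_{e}(\spt \mu)$ has $\Hd \leq \tfrac{1}{2} + \epsilon$ for every $e \in E$; writing $t_{e} := b_{e}/a_{e}$ (defined off one direction), the smooth, dimension-preserving map $e \mapsto t_{e}$ sends $E$ to a set $T := \{t_{e} : e \in E\}$ with $\Hd T > \kappa$. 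In sum: $\tilde{K}$ is a planar set of dimension $\geq 1 - \tau$, trapped inside a product of two sets of dimension $\leq \tfrac{1}{2} + \epsilon$, and for every $t \in T$ the linear image $\{u + tv : (u,v) \in \tilde{K}\}$ has dimension $\leq \tfrac{1}{2} + \epsilon$, with $\Hd T > \kappa$.

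Next I would discretise at a suitable small scale $\delta$ and turn this into a violation of \cite{Bo1}. Passing to $\delta$-separated sets, one extracts finite Katz--Tao non-concentrated sets $A \subset K_{1}$ and $B \subset K_{2}$ with $|A|, |B| \approx \delta^{-1/2}$ (lower bounds from $N_{\delta}(\tilde{K}) \gtrsim \delta^{-(1 - \tau)}$ and $\tilde{K} \subset A \times B$, upper bounds from $\Hd K_{i} \leq \tfrac{1}{2} + \epsilon$), a $\delta$-separated non-concentrated set $\Theta \subset T$ with $|\Theta| \approx \delta^{-\kappa}$, and a subset $G \subset A \times B$ modelling $\tilde{K}$ with $|G| \geq \delta^{O(\tau + \epsilon)} |A| |B|$, such that $\{u + tv : (u,v) \in G\}$ lies in a union of $\lesssim \delta^{-(1/2 + O(\epsilon))}$ intervals of length $\delta$ for every $t \in \Theta$. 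A Balog--Szemer\'edi--Gowers-type refinement, carried out uniformly over the $\delta^{-\kappa}$ slopes in $\Theta$ (a bush / tube-counting argument), should then produce honest subsets $A' \subset A$ and $B' \subset B$ of cardinality $\gtrsim \delta^{O(\tau + \epsilon)}\delta^{-1/2}$ for which the additive structure becomes rigid: $N_{\delta}(A' + tB') \lesssim \delta^{-(1/2 + O(\epsilon))}$ for all $t$ in a subset of $\Theta$ that is still $\kappa$-dimensional. This contradicts Bourgain's discretised sum--product theorem \cite{Bo1}, which forbids $\delta$-separated non-concentrated sets $A', B'$ of dimension bounded away from $0$ and $1$ from admitting such a large non-concentrated family of "compressing" slopes $t$ (for which $N_{\delta}(A' + tB')$ is within a factor $\delta^{-c}$ of $|A'|^{1/2}|B'|^{1/2}$), once $c = c(\kappa) > 0$. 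Choosing $\epsilon$, and then $\tau \ll \epsilon$, small enough that the implicit $O(\epsilon)$ is smaller than $c(\kappa)$ completes the argument.

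The step I expect to be the genuine obstacle --- carrying essentially all the substance of Bourgain's argument --- is the discretisation together with the refinement just described. Two difficulties stand out. First, $\Hd K_{i} \leq \tfrac{1}{2} + \epsilon$ supplies economical covers of $\pi_{e_{i}}(K)$ only along a sequence of scales that \emph{a priori} depends on $e_{i}$, and by intervals of mixed sizes; one must pigeonhole over dyadic scales and over a positive-$\tilde{\mu}$-mass sub-configuration to reach a \emph{single} scale $\delta$ serving a still $\kappa$-dimensional family of directions. Second, and more seriously, one must verify that the Katz--Tao non-concentration conditions required by \cite{Bo1} survive both the linear change of coordinates $\Phi$ and the nested pigeonholings, and that the subsets produced by the Balog--Szemer\'edi--Gowers step retain the expected cardinalities and dimensions. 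Everything else --- Frostman's lemma, the two-transverse-projections trick, the exponent bookkeeping, and the concluding choice of $\epsilon$ --- is routine; it is the robustness of non-concentration across these reductions, fed into the black-box input \cite{Bo1}, that carries the theorem.
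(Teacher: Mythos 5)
The paper does not prove Theorem \ref{bourgain}: it is stated as a known result and cited to Bourgain's 2010 paper \cite{Bo}, whose Theorem 5 is the discretised version actually invoked later (at the end of the proof of Proposition \ref{productProp}). There is therefore no in-paper proof to compare your sketch against, and it would be a mistake to treat this as a statement the paper re-derives.

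Judged on its own terms, your outline has the right shape: Frostman measures for $K$ and $E$, the two-transverse-projections linearisation sending $\pi_{e}$ to the slope map $(u,v)\mapsto u+tv$ acting on a subset of $K_{1}\times K_{2}$, discretisation at a single pigeonholed scale, and a Balog--Szemer\'edi--Gowers regularisation before feeding into a sum--product input. That is indeed the known route, and it is also roughly the reduction from the continuous statement to a discretised one. But the genuine content is precisely the part you leave unexecuted and flag yourself: extracting a \emph{single} scale $\delta$ and Katz--Tao non-concentrated data that work \emph{uniformly} over a still-$\kappa$-dimensional family of slopes, and the amplification from a single compressing slope to a positive-dimensional family. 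This is not a short corollary of the single-pair discretised sum--product theorem in \cite{Bo1}; the robust family-of-slopes version is essentially the content of Theorem 5 in \cite{Bo}, which occupies the bulk of that paper. Your final citation should be \cite{Bo} rather than \cite{Bo1}, and what you have written is an outline of the reduction from Theorem \ref{bourgain} to Bourgain's discretised projection theorem, not a proof. In that sense it mirrors the paper's own stance --- treating the hard analytic core as a black box --- but the paper at least makes the black box explicit by citing it, whereas the sketch risks giving the impression that the reduction to \cite{Bo1} is lighter than it really is.
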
  
In brief, Theorem \ref{bourgain} marks a substantial improvement over Kaufman's bound \eqref{kaufman} for values of $s$ very close to $\tfrac{1}{2}$, but for other values of $s \in (1/2,1)$, Kaufman's bound remains the world record. It is no coincidence that the situation is reminiscent of the known bounds for Furstenberg $s$-sets, for $s$ close to, or far from, $\tfrac{1}{2}$. 

The second main result of the paper is a small improvement for the packing dimension variant of Kaufman's bound \eqref{kaufman}, for any $\tfrac{1}{2} < s < 1$:
\begin{thm}\label{mainProjections} Let $\tfrac{1}{2} < s < 1$. If $K \subset \R^{2}$ is an $\calH^{1}$-measurable set with $\calH^{1}(K) > 0$, then
\begin{displaymath} \Hd \{e \in S^{1} : \Pd \pi_{e}(K) \leq s\} \leq s - \epsilon \end{displaymath}
for some $\epsilon > 0$ depending only on $s$. \end{thm}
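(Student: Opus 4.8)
The plan is to deduce Theorem \ref{mainProjections} from the incidence theorem for planar points and $\delta$-tubes alluded to in the abstract, via a somewhat lengthy but essentially routine discretisation. I argue by contradiction: suppose that for arbitrarily small $\epsilon > 0$ there is an $\calH^{1}$-measurable set $K \subset \R^{2}$ with $\calH^{1}(K) > 0$ whose exceptional set $E := \{e \in S^{1} : \Pd \pi_{e}(K) \leq s\}$ has $\Hd E > s - \epsilon$. Using standard measure-theoretic reductions, I pass to a compact $K' \subset K$ supporting a measure $\mu$ with $0 < \mu(K') < \infty$ and $\mu(B(x,r)) \lesssim r$ for all $x \in \R^{2}$ and $r > 0$; and, fixing $\sigma \in (s - \epsilon, \Hd E)$, I place a compactly supported Frostman probability measure $\nu$ on a compact subset of $E$ with $\nu(B(e,r)) \lesssim r^{\sigma}$. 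The objective is to contradict the inequality $\sigma > s - \epsilon$ once $\epsilon = \epsilon(s)$ is chosen small enough.

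The first substantial step is to turn the hypothesis $\Pd \pi_{e}(K) \leq s$ -- a statement about countable decompositions -- into a uniform box-counting bound holding at \emph{all} small scales. For $e \in E$ write $\pi_{e}(K) = \bigcup_{n} A_{e,n}$ with $A_{e,n}$ increasing and $\Bdim A_{e,n} \leq s$. Since $\mu(K' \cap \pi_{e}^{-1}(A_{e,n})) \nearrow \mu(K')$, there is a least index $n(e)$ with $\mu(K' \cap \pi_{e}^{-1}(A_{e,n(e)})) \geq \tfrac{1}{2}\mu(K')$; pigeonholing in $n(e)$, in the scale below which $N(A_{e,n(e)},\rho) \leq \rho^{-s-\eta}$ holds, and absorbing minor measurability issues, I obtain a set $E_{0} \subset E$ with $\nu(E_{0}) > 0$ -- hence $\Hd E_{0} \geq \sigma > s - \epsilon$ -- a fixed threshold $\delta_{1} = \delta_{1}(\eta) > 0$, and compact sets $K_{e} \subset K'$ ($e \in E_{0}$) with $\mu(K_{e}) \geq \tfrac{1}{2}\mu(K')$, such that
\begin{displaymath} N(\pi_{e}(K_{e}), \rho) \leq \rho^{-s - \eta} \quad \text{for all } 0 < \rho < \delta_{1} \text{ and all } e \in E_{0}. \end{displaymath}
The crucial gain over lower Minkowski dimension is that this bound is valid at \emph{every} small scale, hence simultaneously at $\rho = \delta$ and $\rho = \delta^{1/2}$: it is precisely this two-scale information that the incidence theorem consumes.

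Next, fix a small $\delta > 0$ (ultimately $\delta \to 0$) and discretise. Because $0 < \calH^{1}(K') < \infty$ and $\mu(B(x,r)) \lesssim r$, the family $\calP$ of $\delta$-squares meeting $K'$ has $|\calP| \approx \delta^{-1}$ and is a $(\delta,1)$-set, with the analogous statements at scale $\delta^{1/2}$. For each $e \in E_{0}$, the $\gtrsim \delta^{-1}$ squares of $\calP$ meeting $K_{e}$ project under $\pi_{e}$ into $\lesssim \delta^{-s-\eta}$ intervals of length $\delta$; pulling these back gives a family $\calT_{e}$ of $\lesssim \delta^{-s-\eta}$ tubes of width $\delta$ and direction $e^{\perp}$ covering a $(\delta,1)$-subset of $\calP$ of cardinality $\gtrsim \delta^{-1}$, together with a parallel family of $\lesssim \delta^{-s/2-\eta}$ tubes of width $\delta^{1/2}$. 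Picking a maximal $\delta$-separated $\calE \subset E_{0}$, the Frostman condition yields $|\calE| \gtrsim \delta^{-\sigma}$ with $(\delta,\sigma)$-nonconcentration of the directions $\{e^{\perp} : e \in \calE\}$, and $\calT := \bigcup_{e \in \calE} \calT_{e}$ has $\lesssim \delta^{-\sigma - s - \eta}$ members, at most $\delta^{-s-\eta}$ in each direction. Counting incident pairs $(p,T)$ with $p \in \calP$, $T \in \calT$, $p \subset T$: each $e \in \calE$ contributes $\gtrsim \delta^{-1}$ such pairs through distinct tubes, whence $I(\calP, \calT) \gtrsim |\calE|\,\delta^{-1} \gtrsim \delta^{-1-\sigma}$.

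Finally, I would feed the data $(\calP$, its $\delta^{1/2}$-coarsening, $\calT$, the per-direction bound $\lesssim \delta^{-s-\eta}$, the $(\delta,\sigma)$-set of directions) into the incidence theorem to obtain an upper bound of the shape $I(\calP,\calT) \lesssim_{\eta} \delta^{-1-\sigma+\epsilon'(s)}$, with $\epsilon'(s) > 0$ independent of $\sigma$ throughout the relevant range. Comparing with the lower bound $\delta^{-1-\sigma}$ and letting $\delta \to 0$ after choosing $\eta = \eta(s)$ small, this is impossible unless $\sigma \leq s - \epsilon(s)$, contradicting $\sigma > s - \epsilon$ as soon as $\epsilon \leq \epsilon(s)$; this proves the theorem. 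The genuine difficulty is the incidence theorem itself, established separately. Within the present reduction, the two delicate points are: (i) organising the packing-dimension pigeonholing so the box-counting bounds become uniform without $E$ losing dimension; and (ii) verifying -- possibly after one more pigeonholing to prevent the tubes through a typical point of $\calP$ from clustering in a few directions -- that $\calT$ genuinely satisfies the non-concentration hypotheses of the incidence theorem at both scales $\delta$ and $\delta^{1/2}$, so that the improved rather than the trivial incidence bound applies.
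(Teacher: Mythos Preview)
Your overall strategy is the paper's: reduce from packing to upper box dimension, discretise, and feed the resulting point--tube configuration into the incidence theorem (Theorem~\ref{mainIntro}). However, two of the ``routine'' steps you pass over are exactly the places where real work is needed, and as written they do not go through.

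First, the hypothesis $N(P,\delta^{1/2}) \le \delta^{-1/2-\epsilon}$ of Theorem~\ref{mainIntro} is not a consequence of ``$0 < \calH^{1}(K') < \infty$ and $\mu(B(x,r)) \lesssim r$''. A compact set with finite $\calH^{1}$-measure can have upper box dimension $2$, so neither $N(K',\delta) \lesssim \delta^{-1}$ nor $N(K',\delta^{1/2}) \lesssim \delta^{-1/2}$ need hold at any given scale. The Frostman bound gives the $(\delta,1)$-set non-concentration and hence the \emph{lower} bound $N(P,\delta^{1/2}) \gtrsim \delta^{-1/2}$, but not the upper bound you need. The paper handles this by a genuine scale-selection argument: take an efficient $\calH^{1}$-cover of $K'$, pigeonhole to find a dyadic scale $2^{-j}$ at which $\sim 2^{j}$ balls of that radius carry most of the $\mu$-mass, and set $\delta := 2^{-2j}$. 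Only at this specially chosen scale does one obtain a $(\delta,1)$-set $P$ with $|P| \approx \delta^{-1}$ that is simultaneously covered by $\lessapprox \delta^{-1/2}$ balls of radius $\delta^{1/2}$. Your sentence ``with the analogous statements at scale $\delta^{1/2}$'' hides this.

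Second, to contradict Theorem~\ref{mainIntro} you must violate \emph{both} alternatives in \eqref{alternativeIntro}, in particular $N(\calT,\delta^{1/2}) \lessapprox \delta^{-s}$. You record that each direction contributes $\lesssim \delta^{-s/2-\eta}$ tubes of width $\delta^{1/2}$, but you still need $N(\calE,\delta^{1/2}) \lesssim \delta^{-s/2}$, and the Frostman bound on $\nu$ gives no such upper bound. The paper obtains this from Kaufman's estimate (Proposition~\ref{kaufmanProp}) applied at scale $\delta^{1/2}$: since $N(\pi_{e}(K_{\pi}),\delta^{1/2}) \leq \delta^{-(s+\epsilon)/2}$ for $e \in E$, one gets $N(E,\delta^{1/2}) \lesssim_{\log} \delta^{-(s+\epsilon)/2}$. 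Note this step uses that the box-counting bound on $\pi_{e}(K_{\pi})$ holds for a \emph{single} set $K_{\pi}$, which brings up the third point: the paper's reduction (via Lemma~4.5 of \cite{O2}) produces one compact $K'$ with $\Bd \pi_{e}(K') \leq s$ for all $e$ in a set of dimension $> s - \epsilon$, whereas your pigeonholing yields $e$-dependent sets $K_{e}$. Your version can be salvaged by the Fubini-type averaging you allude to, but the application of Proposition~\ref{kaufmanProp} then needs extra care. Finally, the incidence theorem does not literally output an upper bound on $I(\calP,\calT)$; it says that under the stated hypotheses on $P$ and the families $\calT_{p}$, one of $|\calT| \ge \delta^{-2s-\epsilon}$ or $N(\calT,\delta^{1/2}) \ge \delta^{-s-\epsilon}$ must hold, so the contradiction is reached by checking that both fail, not by comparing incidence counts.
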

The reason for the appearance of $\Pd$ is the same as in Theorem \ref{mainFurstenberg}, and the proof does not to give any improvement for the dimension of $\{e : \Hd \pi_{e}(K) \leq s\}$. The assumption $\calH^{1}(K) > 0$ is quite convenient, but nothing more: the proof would also work for Borel sets $K$ with $\Hd K \geq 1$.

Theorem \ref{mainProjections} first appeared in a preliminary version of this paper \cite{O1} (which is now superseded by the current article, and hence not intended for publication). In the present paper, the proofs of Theorems \ref{mainFurstenberg} and \ref{mainProjections} are deduced from a single discrete result, Theorem \ref{mainIntro} below, which concerns incidences between certain finite families of points and $\delta$-tubes in the plane. At the level of this incidence result, Theorem \ref{mainProjections} is strictly easier than Theorem \ref{mainFurstenberg}, as the relevant families of $\delta$-tubes are somewhat special. 

\subsection{Outline of the paper} Both main results, Theorem \ref{mainFurstenberg} and \ref{mainProjections}, will be proven simultaneously. Section \ref{reductions} reduces the proofs to compact sets, and to corresponding claims about upper Minkowski dimension (instead of packing dimension). Section \ref{scaleSection} reduces the proofs further to the discrete result mentioned above, namely Theorem \ref{mainIntro}. Section \ref{incidenceSection} -- which is the main section of the paper -- contains the proof of Theorem \ref{mainIntro}. 

The reductions to Theorem \ref{mainIntro} are fairly standard, so Theorem \ref{mainIntro} can be considered the main result of the paper. It states, roughly, the following: if every point $p$ in a $\delta$-discretised $1$-dimensional set $P \subset \R^{2}$ is incident to a $\delta$-discretised $s$-dimensional set $\calT_{p}$ of $\delta$-tubes, then at least one of the following holds. Either $\calT = \cup_{p} \calT_{p}$ contains $\gg \delta^{-2s}$ tubes in total, or then it takes $\gg \delta^{-s}$ tubes of width $\delta^{1/2}$ to cover the union of the tubes in $\calT$.

The proof has two phases: the first -- and longer -- reduces the proof to sets $P$ of a special form, which I call "quasi-product sets" in lack of a better term. This phase is elementary but tedious. One starts with a counter assumption: the union of the tubes in $\calT$ can be covered by $\lesssim \delta^{-2s}$ and $\lesssim \delta^{-s}$ tubes at scales $\delta$ and $\delta^{1/2}$, respectively. Building on this information, one eventually finds a single $\delta^{1/2}$ tube $T_{0}$ with the following properties. First, $T_{0}$ contains a large number of points from $P$. Second, each point in $P \cap T_{0}$ is incident to a large number of $\delta$-tubes $T$, which are essentially contained in $T_{0}$ (in particular, this can be used to find an upper bound on the total number of relevant tubes $T$). After such a $\delta^{1/2}$-tube $T_{0}$ has been found, one applies an affine re-scaling $A$, which essentially sends $P \cap T_{0}$ inside the unit square, and maps the $\delta$-tubes $T$ to $\delta^{1/2}$-tubes, see Figure \ref{fig2}. Then, it turns out that $A(P \cap T_{0})$ behaves like a quasi-product set, and has suspiciously many incidences with the $\delta^{1/2}$-tubes $A(T)$. 
\begin{figure}[h!]
\begin{center}
\includegraphics[scale = 0.4]{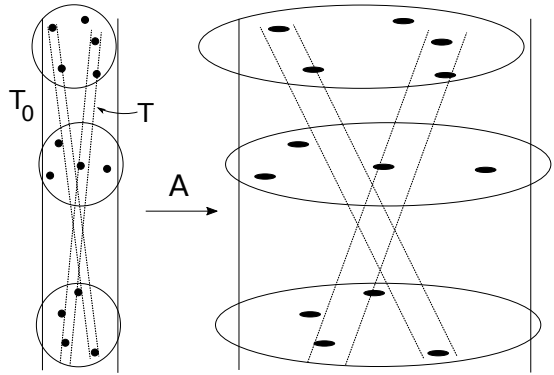}
\caption{The tubes $T_{0},T$ and the set $P \cap T_{0}$, before and after the affine transformation $A$. Explaining why $A(P \cap T_{0})$ "behaves like a quasi-product set" would get too technical here, so I refer to Section \ref{quasiProduct} for more details.}\label{fig2}
\end{center}
\end{figure}

At this point, it may seem like all the work has been fruitless: apart from changing scales from $\delta$ to $\delta^{1/2}$, the original incidence problem associated to $P$ and $\calT$ has precisely the same numerology as the new incidence problem associated to $A(P \cap T_{0})$ and the $\delta^{1/2}$-tubes $A(T)$. However, it turns out that the problem is easier to solve (or at least make progress in) for quasi-product sets, because tools from additive combinatorics become available. 

In the second phase, one proves an incidence theorem for quasi-product sets (see Proposition \ref{productProp}). This uses standard tools from additive combinatorics, such as the Pl\"unnecke-Ruzsa inequalities and the Balog-Szemer\'edi-Gowers theorem. In the end, it turns out that the incidence problem for quasi-product sets is roughly equivalent to a discretised variant of Bourgain's projection theorem, Theorem \ref{bourgain}. Fortunately, Bourgain states and proves a suitable discretised variant of Theorem \ref{bourgain} in his paper \cite{Bo}, so the proof of Theorem \ref{mainIntro} is completed by appealing to Theorem 5 in \cite{Bo}. 

\subsection{Some notation} An open ball in $\R^{d}$ with centre $x$ and radius $r > 0$ will be denoted by $B(x,r)$. The Hausdorff measure and content of dimension $t$ will be denoted by $\calH^{t}$ and $\calH^{t}_{\infty}$, respectively. Given real numbers $A,B > 0$, the notation $A \lesssim B$ means that $A \leq CB$ for some constant $C \geq 1$. If the dependence of $C$ on a parameter $p$ needs to be emphasised, I will write $A \lesssim_{p} B$. The notation $A \gtrsim B$ means that $B \lesssim A$, and $A \sim B$ stands for $A \lesssim B \lesssim A$. 

The notation $A \lesssim_{\log} B$ means that $A \lesssim \log^{C}(1/\delta) B$ for some absolute constant $C \geq 1$, where $\delta > 0$ is a "scale". The meaning of $\delta > 0$ will be clear from the context, whenever the notation is used. The notations $A \gtrsim_{\log} B$ and $A \sim_{\log} B$ are then defined as above. 

Given a bounded set $F \subset \R^{d}$, the notation $N(F,\delta)$ stands for the least number of balls of radius $\delta$ required to cover $F$. The \emph{upper Minkowski dimension} of $F$ is
\begin{displaymath} \Bd F := \limsup_{\delta \to 0} \frac{\log N(F,\delta)}{-\log \delta}. \end{displaymath}
The \emph{packing dimension} $\Pd$ is defined in \eqref{packingDimension} below. 

\subsection{Acknowledgements} I wish to thank the referees for reading the manuscript carefully and giving excellent comments; they helped me make the paper more readable.

%\subsubsection{Specific notation} The notation above was fairly standard, but there is plenty of less ubiquitous notation coming up. This section is aimed as a centralised catalogue of (some of) that notation, so the reader can simply check here instead of leafing through the whole manuscript.
%\begin{itemize}
%\item $\calD$: the point-line duality map. See Definition \ref{pointLineDuality}.
%\item $K_{F}$: a generalised Furstenberg set. Defined above \eqref{KF}
%\item $K_{\pi}$: a set with many small projections. Defined above \eqref{KPi}.
%\end{itemize}

\section{Reductions to Minkowski dimension and compact sets}\label{reductions} In this short section, I reduce the proofs of Theorems \ref{mainFurstenberg} and \ref{mainProjections} to establishing analogous statements for Minkowski dimension (instead of packing dimension), and just for compact sets. 

I start with reductions concerning Furstenberg sets. Let $K \subset \R^{2}$ be an arbitrary Furstenberg $s$-set, and let $S_{K} \subset S^{1}$ be the associated set of unit vectors with $\calH^{1}(S_{K}) > 0$. The definition of packing dimension is
\begin{equation}\label{packingDimension} \Pd K = \inf \left\{\sup_{i} \Bd F_{i} :  K \subset \bigcup_{i} F_{i} \right\}, \end{equation}
where the $\inf$ is taken over all countable covers of $K$ with bounded sets $F_{i}$. Since taking closures does not affect the upper Minkowski dimension $\Bd$, one may restrict attention to covers by compact sets $F_{i}$. Now, given any $\epsilon > 0$, I claim that one of the sets $F_{i}$ is (essentially) a Furstenberg $(s - \epsilon)$-set. This is rather straightforward: since the sets $F_{i}$ cover $K$, one has
\begin{displaymath} \Hd [K \cap L_{e}] = \sup_{i} \Hd [F_{i} \cap L_{e}], \qquad e \in S_{K}, \end{displaymath}
so for any fixed $e \in S^{1}$, it holds that $\calH^{s - \epsilon}_{\infty}(F_{i} \cap L_{e}) > 0$ for some $i$. Consequently, there exists $i$ such that $\calH^{s - \epsilon}_{\infty}(F_{i} \cap L_{e}) \geq c > 0$ for some $c > 0$ and for a positive set of vectors $e$. Thus, $F_{i}$ is a Furstenberg $(s - \epsilon)$-set. Since $\epsilon > 0$ was arbitrary, it follows that it suffices to prove Theorem \ref{mainFurstenberg} for the Minkowski dimension $\Bd$, for compact sets $K$, and under the extra assumption that
\begin{equation}\label{contentAss} \calH^{s}_{\infty}(K \cap L_{e}) \geq c > 0, \qquad e \in S_{K}. \end{equation}

As a slightly less obvious reduction, I claim that, without loss of generality, one may assume that the lines $L_{e}$, $e \in S_{K}$, form a compact set. To formalise the statement, I recall the (standard) concept of point-line duality in the plane:
\begin{definition}[Point-line duality]\label{pointLineDuality} The points in $\R^{2}$ are in one-to-one correspondence with non-vertical lines in $\R^{2}$ via the mapping
\begin{displaymath} \calD : (a,b) \mapsto \{y = ax + b : x \in \R\}. \end{displaymath}
For every set of points $P$, define the set of lines $\calL_{P} := \calD(P) = \{\calD(p) : p \in P\}$. Similarly, for a set $\calL$ of non-vertical lines $\calL$, define the set of points $P_{\calL} := \calD^{-1}(\calL)$. A family of lines will be called open/closed/compact etc. if the point set $P_{\calL}$ has the same topological property. I will also write $\calH^{t}(\calL) := \calH^{t}(P_{\calL})$. 
\end{definition}

\begin{remark}\label{notationalRemark} Even though the family of lines $\calD(P)$ and the planar set $\cup\{L : L \in \calD(P)\}$ are different objects, I will not differentiate between them in subsequent notation. In particular, if $B \subset \R^{2}$ is any set, the notation $B \cap \calD(P)$ refers to $\cup \{B \cap L : L \in \calD(P)\}$.  \end{remark}

Now, let $\calL_{K} := \{L_{e} : e \in S_{K}\}$. Deleting a set of lines with sufficiently small measure, one may assume that every line in $\calL_{K}$ makes a positive (and uniformly bounded from below) angle with the $y$-axis.  Then $P_{K} := P_{\calL_{K}}$ is a bounded graph, that is, a set of the form $\{(a,f(a)) : a \in A\}$, where $A \subset \R$ is a bounded set of positive length, and $f$ is a bounded function. Consider the compact line set
\begin{displaymath} \overline{\calL}_{K} := \calL_{\overline{P}_{K}}. \end{displaymath} 
Then every line $L \in \overline{\calL}_{K}$ has the property \eqref{contentAss}. This follows easily from the upper semi-continuity of Hausdorff content with respect to Hausdorff convergence and the compactness of $K$. Namely, assume for a moment that $L \in \overline{\calL}_{K}$ is such that $K \cap L_{e}$ can be covered by finitely many open balls $B_{i}$ with 
\begin{displaymath} \sum_{i} d(B_{i})^{s} < c. \end{displaymath}
Choose a sequence of lines $L_{j} \in \calL_{K}$ converging to $L$ locally in the Hausdorff metric. Then, using the compactness of $K$, the sets $K \cap L_{j}$ can also be covered by the balls $B_{i}$ for all $j$ large enough, contradicting \eqref{contentAss}. 

The set $\overline{P}_{K}$ may no longer be a graph, but it certainly satisfies $\calH^{1}(\overline{P}_{K}) > 0$. 
\begin{definition}[Generalised Furstenberg $s$-set]\label{genFS} Let $0 < s < 1$. Assume that $K \subset B(0,1) \subset \R^{2}$ is a compact set, and $\calL$ is a compact set of lines with $\calH^{1}(\calL) > 0$ with the property that $\calH^{s}_{\infty}(K \cap L) \geq c > 0$ for every $L \in \calL$, and for some constant $c > 0$. Also, assume that every line $L \in \calL$ makes an angle $\geq 1/10$ with the $y$-axis. Then $K$ is called a \emph{generalised Furstenberg $s$-set}. \end{definition} 
By the previous discussion (and a simple coordinate-change, if necessary, to accommodate the angle requirement), the proof of Theorem \ref{mainFurstenberg} is now reduced to proving to following statement:
\begin{thm}\label{mainGeneralised} $\Bd K \geq 2s + \epsilon$ for every generalised Furstenberg $s$-set $K$, where $\epsilon > 0$ only depends on $s$. \end{thm}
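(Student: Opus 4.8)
The plan is to prove Theorem \ref{mainGeneralised} by a standard $\delta$-discretisation and planar duality that converts it into the point--tube incidence statement sketched in the introduction (Theorem \ref{mainIntro}), and then to put all the real work into that incidence statement. Argue by contradiction: suppose that for every $\epsilon > 0$ there is a generalised Furstenberg $s$-set $K$ with $\Bd K < 2s + \epsilon$. The one feature of \emph{upper Minkowski} dimension that makes this route work — and that fails for Hausdorff dimension, which is exactly why the theorem is only stated for $\Bd$ — is that $\Bd K < 2s + \epsilon$ forces $N(K,r) \le r^{-(2s+\epsilon)}$ for \emph{all} sufficiently small $r$, hence simultaneously at some small scale $\delta$ and at its square root:
\begin{displaymath} N(K,\delta) \le \delta^{-(2s+\epsilon)} \qquad \text{and} \qquad N(K,\delta^{1/2}) \le \delta^{-(s+\epsilon/2)}. \end{displaymath}
Fix such a $\delta$ for the rest of the argument.

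Next I would dualise via $\calD$. After discarding a set of lines of small length (harmless, as $\calH^{1}(\calL) > 0$), the angle condition of Definition \ref{genFS} confines everything to a fixed bounded region on which $\calD$ is bi-Lipschitz. Restricting $\calL$ to a compact subset carrying a Frostman measure $\mu$ with $\mu(B(x,r)) \lesssim r$ (Frostman's lemma, using $\calH^{1}(\calL) > 0$) and $\delta$-discretising produces a family $P$ of $\sim \delta^{-1}$ points that is a $\delta$-discretised $1$-dimensional set. For each $p = \calD^{-1}(L) \in P$, the equivalence $x \in L \iff p \in \calD(x)$ turns $K \cap L$ into a family of lines through $p$; feeding $\calH^{s}_{\infty}(K \cap L) \ge c$ into Frostman's lemma and passing to a subset yields a $\delta$-discretised $s$-dimensional family $\calT_{p}$ of $\delta$-tubes through $p$, one tube per (discretised) point of $K \cap L$. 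Two bookkeeping facts then do the job. First, $\calT := \bigcup_{p} \calT_{p}$ consists of $\delta$-tubes dual to points of $K$, so $|\calT| \lesssim N(K,\delta)$. Second, each $\delta$-tube of $\calT$ lies inside the $\delta^{1/2}$-tube dual to the same point of $K$, and over our bounded region a $\delta^{1/2}$-ball of points of $K$ yields dual $\delta^{1/2}$-tubes all contained in a single $O(\delta^{1/2})$-tube; hence $\bigcup \calT$ is covered by $\lesssim N(K,\delta^{1/2})$ tubes of width $\delta^{1/2}$. (The pigeonholing onto a single scale $\delta$ and the absorption of logarithmic losses are routine and are exactly what Sections \ref{reductions}--\ref{scaleSection} carry out.)

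Now the hypothesis of Theorem \ref{mainIntro} holds for $P$ and $\{\calT_{p}\}$, so one of its two conclusions must hold, with some $\kappa = \kappa(s) > 0$: either $|\calT| \gtrsim_{\log} \delta^{-(2s+\kappa)}$, or covering $\bigcup \calT$ by tubes of width $\delta^{1/2}$ requires $\gtrsim_{\log} \delta^{-(s+\kappa)}$ of them. Combined with the two facts above and the two Minkowski bounds, the first option gives $\delta^{-(2s+\kappa)} \lesssim_{\log} \delta^{-(2s+\epsilon)}$ and the second gives $\delta^{-(s+\kappa)} \lesssim_{\log} \delta^{-(s+\epsilon/2)}$; since $\delta^{-\kappa}$ dominates any power of $\log(1/\delta)$ once $\delta$ is small, both are impossible as soon as $\epsilon = \epsilon(s)$ was chosen smaller than $\kappa(s)$. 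This contradiction proves $\Bd K \ge 2s + \epsilon(s)$.

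The reduction just described is soft; the genuine obstacle — and the part I expect to be hardest to make airtight — is Theorem \ref{mainIntro} itself, to be proved along the two-phase route outlined in the introduction. Phase one is a long but elementary argument: starting from the \emph{failure} of both conclusions, one locates a single $\delta^{1/2}$-tube $T_{0}$ containing many points of $P$, each incident to many $\delta$-tubes essentially contained in $T_{0}$, and then applies an affine rescaling $A$ that sends $P \cap T_{0}$ into the unit square and the relevant $\delta$-tubes to $\delta^{1/2}$-tubes, so that $A(P \cap T_{0})$ acquires "quasi-product" structure while retaining too many $\delta^{1/2}$-tube incidences (Figure \ref{fig2}). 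Phase two is an incidence theorem for quasi-product sets (Proposition \ref{productProp}), proved with the Pl\"unnecke--Ruzsa inequalities and the Balog--Szemer\'edi--Gowers theorem and reduced in the end to Bourgain's discretised projection theorem (Theorem 5 of \cite{Bo}). The delicate point throughout is the passage to quasi-product sets: one must preserve the discretised Frostman conditions at \emph{all} intermediate scales under the rescaling $A$, and organise the counting so that the quasi-product structure is genuinely available when the additive-combinatorial machinery is invoked.
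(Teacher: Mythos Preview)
Your proposal is correct and follows essentially the same route as the paper: counter-assumption giving two-scale box-counting bounds on $K$, dualisation of $\calL$ to a $(\delta,1)$-set $P$ and of $K \cap L$ to $(\delta,s)$-sets of tubes $\calT_{p}$, then a contradiction with Theorem~\ref{mainIntro}, whose proof proceeds via the quasi-product reduction and Proposition~\ref{productProp}. One point you glossed over (though you correctly deferred it to Section~\ref{scaleSection}): Theorem~\ref{mainIntro} also requires the hypothesis $N(P,\delta^{1/2}) \leq \delta^{-1/2-\epsilon}$ on the \emph{point} set, and this does not follow from the Minkowski bound on $K$ but from a careful scale selection exploiting $0 < \calH^{1}(P_{\calL}) < \infty$ --- so $\delta$ cannot simply be ``fixed'' from the $K$-side alone, it is chosen by the pigeonholing of Section~3.1.
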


I now turn to the -- much shorter -- reduction related to the projection result, Theorem \ref{mainProjections}. The following observation is a special case Lemma 4.5 in \cite{O2}:
\begin{lemma} Assume that $K \subset \R^{2}$ is $\calH^{1}$-measurable with $\calH^{1}(K) > 0$, and 
\begin{displaymath} \Hd \{e \in S^{1} : \Pd \pi_{e}(K) < \sigma\} > \beta \end{displaymath}
for some $\sigma,\beta > 0$. Then, there exists a compact set $K' \subset K$ with $\calH^{1}(K') > 0$ such that $\calH^{1}(K') > 0$ and
\begin{displaymath} \Hd \{e \in S^{1} : \Bd \pi_{e}(K') < \sigma\} > \beta. \end{displaymath}
\end{lemma}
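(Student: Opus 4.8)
The plan is to strip $K$ down to a compact piece via inner regularity, to use the countable covers built into the definition \eqref{packingDimension} of packing dimension so as to attach to each relevant direction a single low-Minkowski-dimensional ``target set'' carrying a fixed proportion of the mass, and then to run two parallel compactness arguments in hyperspaces of compact sets in order to replace these direction-dependent targets by one set $K'$. Concretely, set $E:=\{e\in S^{1}:\Pd\pi_{e}(K)<\sigma\}$, so $\Hd E>\beta$. First I would intersect $K$ with a large ball to arrange $0<\calH^{1}(K)<\infty$, so that $\calH^{1}|_{K}$ is a finite Radon measure, and then pass by inner regularity to a compact $F\subseteq K$ with $0<\calH^{1}(F)<\infty$; write $\mu:=\calH^{1}|_{F}$ and, after translating and rescaling, assume $\spt\mu\subseteq B(0,1)$. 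For every $e\in E$ one has $\Pd\pi_{e}(F)\le\Pd\pi_{e}(K)<\sigma$, so by \eqref{packingDimension} there is a cover $\pi_{e}(F)\subseteq\bigcup_{j}A_{j}^{e}$ with $\sup_{j}\Bd A_{j}^{e}<\sigma$. Disjointifying the $A_{j}^{e}$ (which only decreases their Minkowski dimensions) makes $\sum_{j>N}\mu(\pi_{e}^{-1}(A_{j}^{e})\cap F)\to0$, so a finite union $B^{e}:=\bigcup_{j\le N_{e}}A_{j}^{e}$ satisfies $\mu(\pi_{e}^{-1}(B^{e})\cap F)\ge\tfrac12\mu(F)$ and $\Bd B^{e}<\sigma$; passing to the closure preserves this. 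Making ``$\Bd B^{e}<\sigma$'' quantitative and pigeonholing over dyadic threshold scales and over rational exponents --- using at each step that the Hausdorff dimension of a union is the supremum of the dimensions of its members --- I would obtain a scale $\delta_{0}>0$, an exponent $\sigma_{0}<\sigma$, and a set $E_{1}\subseteq E$ with $\Hd E_{1}>\beta$ such that every $e\in E_{1}$ admits a compact $B^{e}\subseteq[-1,1]$ with $N(B^{e},\delta)\le\delta^{-\sigma_{0}}$ for all $\delta\in(0,\delta_{0})$ and $\mu(\pi_{e}^{-1}(B^{e})\cap F)\ge\tfrac12\mu(F)$.

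The second phase is to decouple the targets from $e$. Let $\calK_{0}$ denote the compact subsets of $[-1,1]$ obeying $N(\cdot,\delta)\le\delta^{-\sigma_{0}}$ for all $\delta\in(0,\delta_{0})$; with the Hausdorff metric this is a compact metric space, since the covering bound survives Hausdorff limits. For each $k$ I would cover $\calK_{0}$ by finitely many balls of radius $2^{-k}$ and pigeonhole (again the Hausdorff dimension of a finite union is the maximum) to find $E_{1}^{k}\subseteq E_{1}$ with $\Hd E_{1}^{k}=\Hd E_{1}>\beta$ on which all of the $B^{e}$ lie within $2^{-k}$ of a common $A_{k}\in\calK_{0}$, pass to a subsequence with $A_{k}\to A_{*}\in\calK_{0}$, fix $\beta'\in(\beta,\Hd E_{1})$, take Frostman measures $\nu_{k}$ at exponent $\beta'$ on compact subsets of $E_{1}^{k}$, and pass to a weak-$*$ limit $\nu$. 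Then $\nu$ is still a Frostman measure at exponent $\beta'$, and a diagonal extraction combined with the upper semicontinuity of $\mu$ under Hausdorff convergence of compact sets (which follows from outer regularity of $\mu$, in the spirit of the semicontinuity used in Section \ref{reductions}) and the triviality that a point at distance $0$ from the closed set $A_{*}$ belongs to $A_{*}$ should give $\spt\nu\subseteq E_{*}:=\{e:\mu(\pi_{e}^{-1}(A_{*})\cap F)\ge\tfrac12\mu(F)\}$, hence $\Hd E_{*}\ge\beta'>\beta$. Finally I would run the identical manoeuvre one level higher: the assignment $e\mapsto K_{e}:=\pi_{e}^{-1}(A_{*})\cap F$ is a Borel map from $E_{*}$ into the compact metric hyperspace $\calK(F)$ of compact subsets of $F$, with $\mu(K_{e})\ge\tfrac12\mu(F)$ and $\pi_{e}(K_{e})\subseteq A_{*}$ for every $e\in E_{*}$; covering $\calK(F)$ by small balls, pigeonholing, and taking a weak-$*$ limit of Frostman measures in the same way produces a single compact $K'\subseteq F\subseteq K$, realised as a Hausdorff limit of sets whose small neighbourhoods have $\mu$-measure at least $\tfrac12\mu(F)$, so that $\calH^{1}(K')=\mu(K')\ge\tfrac12\mu(F)>0$ by upper semicontinuity of $\mu$, together with a set of directions of Hausdorff dimension $>\beta$ along which $\pi_{e}(K')\subseteq A_{*}$, and therefore $\Bd\pi_{e}(K')\le\sigma_{0}<\sigma$. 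This $K'$ is the one claimed.

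The inner-regularity reduction and the pigeonholing I expect to be routine. The hard part is the decoupling of $K'$ from $e$: the targets $B^{e}$, and subsequently the good sets $K_{e}$, genuinely depend on the direction, and the point is to exhibit one compact $K'$ that simultaneously works for a set of directions of Hausdorff dimension exceeding $\beta$ --- producing one working for a single direction would be trivial. This is what forces the use of Frostman measures on the direction parameter and their weak-$*$ limits, since a na\"ive intersection over directions of a decreasing sequence of sets of dimension $>\beta$ can collapse to dimension $0$; the potential escape of Hausdorff dimension has to be controlled measure-theoretically, and it is the closedness of the limiting target $A_{*}$ that upgrades ``$\pi_{e}(K')$ lies within $o(1)$ of $A_{*}$'' to the exact inclusion $\pi_{e}(K')\subseteq A_{*}$ needed to conclude $\Bd\pi_{e}(K')<\sigma$.
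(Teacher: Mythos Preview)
The paper does not prove this lemma; it merely records it as a special case of Lemma~4.5 in \cite{O2} and moves on, so there is no in-paper proof to compare against. Your argument is a reasonable self-contained substitute, and the outline is sound: pass by inner regularity to a compact $F$ carrying a finite Radon measure $\mu$, use the cover definition \eqref{packingDimension} of $\Pd$ to attach to each bad direction a closed low-$\Bd$ target absorbing a fixed fraction of the $\mu$-mass, pigeonhole to make the bounds uniform, and then run two limiting arguments in hyperspaces --- first to freeze the target as a single $A_{*}$, then to freeze the preimages as a single $K'$. The key step in the third phase is exactly what you identify: if $x_{k}\in K_{e_{k}}$ with $x_{k}\to x$ and $e_{k}\to e$, then $\pi_{e_{k}}(x_{k})\in A_{*}$ and $\pi_{e_{k}}(x_{k})\to\pi_{e}(x)$, so $\pi_{e}(x)\in A_{*}$ by closedness; combined with upper semicontinuity of $\mu$ under Hausdorff convergence (from outer regularity), this yields both $\pi_{e}(K')\subset A_{*}$ and $\mu(K')\ge\tfrac12\mu(F)$.

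Two places would need care in a full write-up. First, there is no canonical choice of $B^{e}$, so the pigeonholing over balls $U$ in $\calK_{0}$ should be phrased existentially, as $\{e\in E_{1}:\exists B\in U\text{ with the required properties}\}$; these sets are then analytic (projections of Borel sets in $S^{1}\times\calK_{0}$), which is what you need for Frostman's lemma. Second, to conclude $\spt\nu\subset E_{*}$ you must justify that every $e\in\spt\nu$ is approximated by points $e_{k}\in\spt\nu_{k}$ along the chosen subsequence; this follows from the Portmanteau lower bound $\liminf\nu_{k}(B(e,r))\ge\nu(B(e,r))>0$ on open balls, and a diagonal extraction. With those points made precise, the argument goes through.
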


It follows immediately that it suffices to prove Theorem \ref{mainProjections} for $\Bd$ instead of $\Pd$, and for compact sets $K$ with $\calH^{1}(K) > 0$. Also, one may restrict attention to the case $0 < \calH^{1}(K) < \infty$, since compact subsets with finite measure can always be found, and proving the theorem for any subset implies it for the whole set. For purposes of easy reference, I record the result explicitly:
\begin{thm}\label{mainProjections2} Let $1/2 < s < 1$. If $K \subset \R^{2}$ is a is compact with $0 < \calH^{1}(K) < \infty$, then
\begin{displaymath} \Hd \{e \in S^{1} : \Bd \pi_{e}(K) \leq s\} \leq s - \epsilon \end{displaymath}
for some $\epsilon > 0$ depending only on $s$. \end{thm}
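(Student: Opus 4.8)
The plan is to deduce this statement from the incidence result Theorem \ref{mainIntro} (stated below), following the reductions of Sections \ref{scaleSection}--\ref{incidenceSection}; I sketch only the overall shape. Suppose, towards a contradiction, that
\begin{displaymath} E := \{e \in S^{1} : \Bd \pi_{e}(K) \leq s\} \end{displaymath}
has $\Hd E > s - \epsilon$, with $\epsilon = \epsilon(s) > 0$ to be fixed below. Since $\Hd K = 1$, Kaufman's bound \eqref{kaufman} (applied at exponents slightly larger than $s$) already gives $\Hd E \leq s$, so only the range $s - \epsilon < \Hd E \leq s$ needs to be excluded. Fix a compact subset of $E$ carrying a Frostman measure $\nu$ of exponent $s - \epsilon$, so $\nu(B(e,r)) \lesssim r^{s - \epsilon}$ for all $r > 0$; and, after passing to a compact subset of positive length, let $\mu := \calH^{1}|_{K}$, which then satisfies $\mu(B(x,r)) \lesssim r$ and $\mu(K) \sim 1$.

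\emph{Passage to a single scale.} Fix a small auxiliary exponent $\eta \ll \epsilon$. For $e \in E$ the definition of $\Bd$ provides $\delta_{0}(e) > 0$ with $N(\pi_{e}(K), \rho) \leq \rho^{-s - \eta}$ for all $0 < \rho < \delta_{0}(e)$, and pigeonholing $\delta_{0}(e)$ against $\nu$ produces a fixed $\delta_{0} > 0$ and a subset of $E$ of comparable $\nu$-mass on which $\delta_{0}(e) \geq \delta_{0}$. From here on $\delta \in 2^{-\N}$ is an arbitrary scale with $\delta^{1/2} < \delta_{0}$, so that for every retained direction $e$,
\begin{displaymath} N(\pi_{e}(K), \delta) \leq \delta^{-s - \eta} \quad \text{and} \quad N(\pi_{e}(K), \delta^{1/2}) \leq \delta^{-(s + \eta)/2} \end{displaymath}
hold \emph{simultaneously}. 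A standard multi-scale extraction, carried out at the two scales $\delta$ and $\delta^{1/2}$, then produces from $\nu$ a set of directions $\Theta$ which is a $(\delta, s - \epsilon)$-set with $\#\Theta \sim_{\log} \delta^{-(s - \epsilon)}$ and whose $\delta^{1/2}$-coarsening has $\lesssim_{\log} \delta^{-(s - \epsilon)/2}$ elements, and from $\mu$ a $(\delta, 1)$-set $P \subset K$ with $\#P \sim_{\log} \delta^{-1}$.

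\emph{The incidence configuration.} For $p \in P$ let $\calT_{p}$ be the family of $\delta$-tubes through $p$ with orientation $e^{\perp}$, $e \in \Theta$; since $\Theta$ is a $(\delta, s - \epsilon)$-set, this is a $\delta$-discretised $(s - \epsilon)$-dimensional set of $\sim_{\log} \delta^{-(s - \epsilon)}$ tubes. (All the families $\calT_{p}$ are translates of a single one, which is precisely why the present instance of Theorem \ref{mainIntro} is the easier one noted in the introduction.) The total family $\calT := \bigcup_{p} \calT_{p}$ consists of all $\delta$-tubes with orientation in $\Theta^{\perp}$ that meet $K$, so the single-scale bound gives $\#\calT \lesssim_{\log} \#\Theta \cdot \max_{e \in \Theta} N(\pi_{e}(K), \delta) \lesssim_{\log} \delta^{-(2s - \epsilon + \eta)}$; and, grouping the tubes of $\calT$ by the $\delta^{1/2}$-coarsening of $\Theta$ and covering each group by the $\lesssim \delta^{-(s + \eta)/2}$ tubes of width $\delta^{1/2}$ covering the corresponding projection $\pi_{e}(K)$, one sees that $\bigcup \calT$ is covered by $\lesssim_{\log} \delta^{-(s - \epsilon)/2} \cdot \delta^{-(s + \eta)/2} = \delta^{-(2s - \epsilon + \eta)/2}$ tubes of width $\delta^{1/2}$. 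Applying Theorem \ref{mainIntro} to the pair $(P, \{\calT_{p}\}_{p})$ with dimension parameter $s - \epsilon$, and choosing $\epsilon$ (hence $\eta$) small enough in terms of $s$ that $s - \epsilon > \tfrac12$ and $\epsilon + \eta$ lies below the gain $\epsilon_{0} = \epsilon_{0}(s - \epsilon) > 0$ supplied by that theorem, the bound $\#\calT \lesssim_{\log} \delta^{-(2s - \epsilon + \eta)}$ contradicts its first alternative ($\#\calT \gtrsim \delta^{-\epsilon_{0}} \delta^{-2(s - \epsilon)}$), and the bound on the number of $\delta^{1/2}$-tubes covering $\bigcup \calT$ contradicts its second alternative ($\gtrsim \delta^{-\epsilon_{0}} \delta^{-(s - \epsilon)}$ such tubes needed, since $2s - \epsilon + \eta < 2(\epsilon_{0} + s - \epsilon)$). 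As $\delta$ was an arbitrary sufficiently small scale, this is the desired contradiction, so $\Hd E \leq s - \epsilon$.

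\emph{Main obstacle.} Granting Theorem \ref{mainIntro} --- whose proof is the bulk of the paper --- the one delicate point above is the passage to a single scale carrying information at \emph{both} $\delta$ and $\delta^{1/2}$; it works because $N(\pi_{e}(K), \cdot)$ is robust under shrinking the scale, and it is also the reason only upper Minkowski (hence packing) and not Hausdorff dimension is reached, as the assumption $\Hd \pi_{e}(K) \leq s$ would give no control at scale $\delta^{1/2}$. The remaining ingredients --- the dyadic pigeonholing, the multi-scale $(\delta, \cdot)$-set extractions, and the reduction of the Frostman data $\mu, \nu$ to genuine discretised point--tube configurations --- are standard if tedious, and belong to Sections \ref{scaleSection}--\ref{incidenceSection}.
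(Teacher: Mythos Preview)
Your proposal follows the same route as the paper: counter-assume, discretise $K$ and $E$ at scales $\delta$ and $\delta^{1/2}$, assemble the families $\calT_{p}$, and contradict Theorem~\ref{mainIntro}. Two small points where your sketch diverges from the paper's execution are worth flagging. First, the scale $\delta$ is \emph{not} arbitrary: the hypothesis $N(P,\delta^{1/2}) \leq \delta^{-1/2-\epsilon}$ of Theorem~\ref{mainIntro} is precisely what the paper's pigeonholing argument (using $0 < \calH^{1}(K) < \infty$ and an efficient cover, see \eqref{form190}--\eqref{form200}) is designed to secure, and it produces \emph{one} such scale below any given $\delta_{0}$, not all of them---so ``$\delta$ was an arbitrary sufficiently small scale'' is not quite right. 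Second, your bound $N(\Theta,\delta^{1/2}) \lesssim_{\log} \delta^{-(s-\epsilon)/2}$ via ``multi-scale extraction from $\nu$'' does not follow from the Frostman condition on $\nu$ alone at a prescribed scale; the paper instead bounds $N(E_{\delta},\delta^{1/2})$ by applying Kaufman's estimate (Proposition~\ref{kaufmanProp}) to $K$ at scale $\delta^{1/2}$, which works at every scale and gives the slightly weaker but sufficient $\lesssim_{\log} \delta^{-(s+\eta)/2}$. Neither point is fatal---both are resolved by the arguments you explicitly defer to---but they are the places where the paper does genuine work rather than ``standard'' bookkeeping.
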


\section{Proofs of Theorems \ref{mainGeneralised} and \ref{mainProjections2}}\label{scaleSection}

The proofs of both the main theorems are based on counter assumptions. If Theorem \ref{mainGeneralised} fails, then for arbitrarily small $\epsilon > 0$, there exists a generalised Furstenberg $s$-set $K_{F} \subset \R^{2}$ such that 
\begin{equation}\label{KF} N(K_{F},\delta) \leq \delta^{-2s - \epsilon} \end{equation}
for all small enough $\delta > 0$, say $0 < \delta \leq \delta_{1}$. Recall that $N(K,\delta)$ is the least number of balls of radius $\delta$ required to cover $K$. Similarly, if Theorem \ref{mainProjections2} fails, then for arbitrarily small $\epsilon > 0$, there exists a number $1/2 < s < 1$, a compact set $K_{\pi} \subset \R^{2}$ with $0 < \calH^{1}(K_{\pi}) < \infty$, and a set of vectors $E \subset S^{1}$ with $\calH^{s}(E) > 0$ such that
\begin{equation}\label{KPi} N(\pi_{e}(K_{\pi}),\delta) \leq \delta^{-s - \epsilon}, \qquad e \in E, \end{equation}
for all $0 < \delta \leq \delta_{2}$. 

The purpose of this section is to first pick the scale $0 < \delta \leq \delta_{0} \leq \min\{\delta_{1},\delta_{2}\}$ so that the information from the counter assumptions \eqref{KF} and \eqref{KPi} is as strong as possible. Then, at this scale $\delta > 0$, the counter assumptions are employed to construct an "impossible" configuration of $\delta$-separated points and $\delta$-tubes. The "impossibility" of the configuration is finally deduced from an incidence result, Theorem \ref{mainIntro}. The proof of the incidence result is a separate story, which will occupy the remainder of the paper.

\subsection{Finding the scale $\delta > 0$} The scale $\delta$ will only be chosen once, but it can be chosen arbitrarily small (by choosing $\delta_{0} \leq \min\{\delta_{1},\delta_{2}\}$ very small). This will be useful -- and implicitly assumed -- countless times below. For example, I will always implicitly assume that $\delta^{-\epsilon}$ is far larger than various constants $C \geq 1$, which appear throughout the proof.

I start by recalling some basic facts about "discretising $s$-dimensional sets". As far as I know, the following definition is due to Katz and Tao \cite{KT}: 

\begin{definition}[$(\delta,s,C)$-sets]\label{deltaSSet} Fix $\delta,s > 0$. A finite $\delta$-separated set $P \subset \R^{d}$ is called a $(\delta,s,C)$-set, if
\begin{equation}\label{deltaTSet} |P \cap B(x,r)| \leq C\left(\frac{r}{\delta} \right)^{s} \end{equation}
for all $x \in \R^{d}$ and $\delta \leq r \leq 1$. Here and below, $|\cdot |$ stands for cardinality. 
\end{definition}

An open ball of radius $\delta > 0$ will be called a \emph{$\delta$-ball}. A collection of $\delta$-balls will be called a $(\delta,s,C)$-set, if the centres of the balls form a $(\delta,s,C)$-set. The following proposition explains the rationale behind $(\delta,s,C)$-sets:

\begin{proposition}\label{deltasSet} Let $\delta > 0$, and let $B \subset B(0,1) \subset \R^{2}$ be a set with $\calH^{s}_{\infty}(B) =: \kappa > 0$. Then, there exists a $(\delta,s,C)$-set $P \subset B$ with cardinality $|P| \geq (\kappa/C) \cdot \delta^{-s}$, where $C \geq 1$ is an absolute constant. \end{proposition}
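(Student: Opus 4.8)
The plan is to extract the set $P$ greedily from a maximal $\delta$-separated subset of $B$, and then verify the $(\delta,s,C)$-set property by a standard pigeonholing argument against the definition of Hausdorff content. First I would let $P_0 \subset B$ be a maximal $\delta$-separated subset of $B$; then the $\delta$-balls centred at the points of $P_0$ cover $B$ (otherwise one could add another point), so by the definition of $\calH^s_\infty$ one has
\begin{displaymath}
\kappa = \calH^s_\infty(B) \leq \sum_{p \in P_0} (2\delta)^s = 2^s |P_0| \delta^s,
\end{displaymath}
which already gives $|P_0| \geq 2^{-s}\kappa \delta^{-s} \gtrsim \kappa \delta^{-s}$. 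This handles the cardinality lower bound, up to the absolute constant. The issue is that $P_0$ itself need not be a $(\delta,s,C)$-set: it could have heavy concentration in some ball $B(x,r)$.

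To fix this, I would run a standard "remove the densest ball" iteration, or equivalently invoke the well-known fact (essentially due to Katz and Tao) that any $\delta$-separated set of cardinality $M$ contains a $(\delta,s,C)$-subset of cardinality $\gtrsim M$ provided $M \gtrsim \delta^{-s}$, with $C$ absolute. Concretely: if $P_0$ is not a $(\delta,s,C_0)$-set for a suitable absolute $C_0$, there is a ball $B(x,r)$ with $|P_0 \cap B(x,r)| > C_0 (r/\delta)^s$; but $P_0 \cap B(x,r)$ is $\delta$-separated inside a ball of radius $r \leq 1$, so it can be re-examined at scale $r$. The cleanest route is a dyadic pigeonhole: among the scales $r = 2^{-k}$, $\delta \leq r \leq 1$ (there are $\lesssim \log(1/\delta)$ of them), and among balls of radius $r$, the content estimate $\calH^s_\infty(B) = \kappa$ forces that the portion of $P_0$ lying in balls that are "over-full" by a factor $\gg \log(1/\delta)$ at some scale has total cardinality at most $\tfrac12 |P_0|$; discarding those points leaves a $(\delta,s,C)$-set with $C \sim \log^{O(1)}(1/\delta)$. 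This is not quite "$C$ absolute" — so to get an \emph{absolute} constant one should instead use the direct iteration: repeatedly find the smallest scale $r$ at which some ball $B(x,r)$ violates \eqref{deltaTSet} with constant $C_0$, and \emph{remove} the points of $P_0$ in $B(x,r/2)$ from future consideration while \emph{keeping} just one representative; a counting argument (each removal destroys at most a controlled fraction of the content, summed geometrically) shows the process terminates having kept $\gtrsim |P_0| \gtrsim \kappa\delta^{-s}$ points, and the surviving set satisfies \eqref{deltaTSet} for all $x$ and all $\delta \leq r \leq 1$ with an absolute $C$.

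The main obstacle is precisely this last point: getting the constant $C$ in the $(\delta,s,C)$-set property to be genuinely \emph{absolute} (independent of $\delta$ and of $\kappa$), rather than losing a $\log(1/\delta)$ factor from the naive pigeonhole over dyadic scales. The resolution is to organise the pruning so that one controls the \emph{sum over all bad scales simultaneously} via a single application of the content bound — using that if $B(x_j, r_j)$ are disjoint balls each carrying $\gtrsim C_0 (r_j/\delta)^s$ points of a $\delta$-separated set, then $\sum_j (r_j)^s \gtrsim C_0 \delta^s \sum_j |P_0 \cap B(x_j,r_j)|/ (\text{const})$, while $\calH^s_\infty$ bounds $\sum_j r_j^s$ from above in terms of $\kappa$ up to the covering of $B$ — so choosing $C_0$ a large absolute constant forces the bad part to be a small fraction of $P_0$. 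Everything else (the cardinality bound, the inclusion $P \subset B$) is immediate from the construction.
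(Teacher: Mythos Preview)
Your proposal has a genuine gap in the pruning step. The strategy ``start with a maximal $\delta$-separated set $P_{0}$ and discard a small fraction to obtain the $(\delta,s,C)$-property'' cannot work as stated, because $|P_{0}|$ may be far larger than $\kappa\delta^{-s}$. For instance, if $B = B(0,1)$ and $0 < s < 2$, then $\kappa \sim 1$ but $|P_{0}| \sim \delta^{-2}$, and \emph{every} ball $B(x,r)$ with $\delta \leq r \leq 1$ satisfies $|P_{0} \cap B(x,r)| \sim (r/\delta)^{2} \gg C_{0}(r/\delta)^{s}$; so the ``bad'' part of $P_{0}$ is essentially all of $P_{0}$, not a small fraction. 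Your iterative removal sketch claims to keep $\gtrsim |P_{0}|$ points, which is impossible here. The final paragraph also has the inequalities reversed: from $|P_{0} \cap B(x_{j},r_{j})| > C_{0}(r_{j}/\delta)^{s}$ one gets $\sum_{j} r_{j}^{s} \leq C_{0}^{-1}\delta^{s}\sum_{j}|P_{0} \cap B(x_{j},r_{j})|$, not $\gtrsim C_{0}\cdots$; and $\calH^{s}_{\infty}(B) = \kappa$ gives a \emph{lower} bound on $\sum (\diam)^{s}$ for \emph{covers} of $B$, not an upper bound for an arbitrary disjoint family of balls.

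The paper does not give a self-contained proof but refers to the appendix of \cite{FO} and notes that the argument is essentially Frostman's lemma. The point is that one should not try to keep most of $P_{0}$, but rather redistribute mass: assign weight $\delta^{s}$ to each $\delta$-dyadic cube meeting $B$, then pass upward through dyadic scales, and whenever a dyadic cube $Q$ carries total weight exceeding $\ell(Q)^{s}$, rescale the weights inside $Q$ so that the total becomes exactly $\ell(Q)^{s}$. The resulting measure $\mu$ satisfies $\mu(Q) \leq \ell(Q)^{s}$ for all dyadic $Q$ with $\ell(Q) \geq \delta$, and the \emph{maximal} cubes where rescaling occurred, together with the untouched $\delta$-cubes, form a cover of $B$; comparing with $\calH^{s}_{\infty}(B) = \kappa$ gives $\mu(\R^{2}) \gtrsim \kappa$. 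Since each $\delta$-cube has $\mu$-mass at most $\delta^{s}$, one extracts $\gtrsim \kappa\delta^{-s}$ of them, and the Frostman bound on $\mu$ translates directly into the $(\delta,s,C)$-set condition with an absolute $C$. This mechanism is what your sketch is missing: it \emph{thins out} dense regions in a controlled way rather than attempting to show that dense regions are rare.
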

The proof is very close to that Frostman's lemma; the details can be found in the appendix of \cite{FO}. Note that any $(\delta,s,C)$-set $P \subset B(0,1)$ satisfies $|P| \leq C\delta^{-s}$. So, slightly informally, Proposition \ref{deltaSSet} says that sets $B \subset B(0,1)$ with $\calH^{s}(B) > 0$ contain $(\delta,s,C)$-sets with near-maximal cardinality.

Now, I will pick a suitable scale $\delta > 0$. Recall the sets $K_{F}$ and $K_{\pi}$. Since $K_{F}$ is a generalised Furstenberg $s$-set, it comes bundled with a compact set of lines $\calL$ with $\calH^{1}(\calL) > 0$. Clearly, one may also assume that $\calH^{1}(\calL) < \infty$, since a compact finite-measure subset $\calL' \subset \calL$ can be found, and then the pair $K_{F}, \calL'$ satisfies the same hypotheses as $K_{F},\calL$. 

Let $K$ be either one of the sets $K_{\pi}$ or $P_{\calL}$, so that $K \subset B(0,1)$ and $0 < \calH^{1}(K) < \infty$. I will treat $\calH^{1}(K)$ as an absolute constant in the $\lesssim$-notation below; in particular $\calH^{1}(K) \sim 1$. Let $\mu$ be a Frostman measure supported on $K$, that is, $\mu(K) = 1$ and $\mu(B(x,r)) \lesssim r$ for all balls $B(x,r) \subset \R^{2}$. Next, let $\calB$ be an efficient $\delta_{0}$-cover for $K$, that is,
\begin{equation}\label{form190} \sup\{\diam B : B \in \calB\} \leq \delta_{0} \quad \text{and} \quad \sum_{B \in \calB} \diam(B) \lesssim \calH^{1}(K) \sim 1. \end{equation} 
One may assume that the diameters of the balls in $\calB$ are of the form $2^{-j}$, $j \in \N$. For $2^{-j} \leq \delta_{0}$, set $\calB_{j} := \{B \in \calB : \diam(B) = 2^{-j}\}$, and observe that
\begin{displaymath} \sum_{2^{-j} \leq \delta_{0}} \sum_{B \in \calB_{j}} \mu(B) \geq \mu(K) = 1. \end{displaymath}
In particular, there exists an index $j \in \N$ with $2^{-j} \leq \delta_{0}$ and
\begin{equation}\label{form117} \sum_{B \in \calB_{j}} \mu(B) \gtrsim \frac{1}{(j - j_{0} + 1)^{2}}. \end{equation}
Here $j_{0} \in \N$ is the smallest number with $2^{-j_{0}} \leq \delta_{0}$. Now, I set
\begin{displaymath} \delta := 2^{-2j}, \end{displaymath}
so that $\delta^{1/2} = 2^{-j}$. Note that $\delta^{1/2} \leq \delta_{0}$. In particular, \eqref{form117} implies that
\begin{equation}\label{form180} \sum_{B \in \calB_{j}} \mu(B) \gtrsim_{\log} 1. \end{equation}
Observe that $|\calB_{j}| \lesssim \delta^{-1/2}$ by \eqref{form190}, and on the other hand every ball $B \in \calB_{j}$ satisfies $\mu(B) \lesssim \delta^{1/2}$. Thus, \eqref{form180} implies that there are $\sim_{\log} \delta^{-1/2}$ balls in $\calB_{j}$, denoted by $\calB_{j}^{G}$, such that
\begin{equation}\label{form200} \mu(B) \gtrsim_{\log} \delta^{1/2}, \qquad B \in \calB_{j}^{G}. \end{equation}
Discarding a few balls if necessary, one may assume that 
\begin{equation}\label{form210} \dist(B,B') \geq \delta^{-1/2}, \qquad B,B' \in \calB_{j}^{G}. \end{equation}
For each ball $B \in B_{j}^{G}$, choose a $(\delta,1,C)$-set $P_{B} \subset B$ with $C \sim 1$ and $|P_{B}| \gtrsim_{\log} \delta^{-1/2}$. This is possible by Proposition \ref{deltasSet}, since \eqref{form200} and the linear growth of $\mu$ imply that $\calH_{\infty}^{1}(B \cap K) \gtrsim_{\log} \delta^{1/2}$. Write
\begin{displaymath} P := \bigcup_{B \in \calB_{j}^{G}} P_{B}. \end{displaymath}
Then $|P| \sim_{\log} \delta^{-1}$, and $P_{B} = B \cap P$ for $B \in \calB_{j}^{G}$. I will now verify that $P$ is a $(\delta,1,C)$-set for some $C \sim_{\log} 1$. Fix $x \in P$ and $r \geq \delta$, let $B \in \calB_{j}^{G}$ be the unique ball with $x \in P_{B}$. There are two cases to consider: if $\delta \leq r \leq \delta^{1/2}$, one needs only note that $|P \cap B(x,r)| = |P_{B} \cap B(x,r)|$ by \eqref{form210}, and recall that $P_{B}$ is a $(\delta,1,C)$-set with $C \sim 1$. So, let $r \geq \delta^{1/2}$. This time, if $B(x,r) \cap B' \neq \emptyset$ for some ball $B' \in \calB_{j}^{G}$, then $r$ is large enough to ensure that $B' \subset B(x,2r)$. Hence, by $|P \cap B'| \lesssim \delta^{-1/2}$ for $B' \in \calB_{j}^{G}$, and \eqref{form200}, and the disjointness of the balls in $\calB_{j}^{G}$, one obtains
\begin{displaymath} |P \cap B(x,r)| \lesssim \delta^{-1/2} \mathop{\sum_{B' \in \calB_{j}^{G}}}_{B' \cap B(x,r) \neq \emptyset} \frac{\mu(B')}{\mu(B')} \lesssim_{\log} \delta^{-1} \mathop{\sum_{B' \in \calB_{j}^{G}}}_{B' \subset B(x,2r)} \mu(B') \leq \frac{\mu(B(x,2r))}{\delta} \lesssim \frac{r}{\delta}. \end{displaymath} 

I recap the main achievements so far. For a suitable scale $\delta \leq \delta_{0}^{2}$, a $(\delta,1,C)$-set $P \subset K \in \{K_{\pi},P_{\calL}\}$ has now been constructed with $C \sim_{\log} 1$, along with a family of $\delta^{1/2}$-balls $\calB$ such that
\begin{itemize}
\item[(P1)]\label{P1} $|P| \sim_{\log} \delta^{-1}$, and $|\calB| \lesssim \delta^{-1/2}$,
\item[(P2)] $P \subset \bigcup_{B \in \calB} B$.
\end{itemize}

\subsection{Finding $\delta$-tubes} Next, relying on the counter assumptions \eqref{KF} and \eqref{KPi}, I will accompany $P$ with a finite family of $\delta$-tubes. For technical reasons, I will consider two types of $\delta$-tubes in this paper: the \emph{ordinary} ones, which are $(\delta/2)$-neighbourhoods of lines in $\R^{2}$, and then the \emph{dyadic} ones, which I now proceed to define:

%I recap the achievements so far. For a certain scale $\delta \leq (\delta_{0})^{2}$, the following hold:
%\begin{itemize}
%\item $P \subset K$ is a $(\delta,1)$-set of cardinality $|P| \sim_{\log} \delta^{-1}$.
%\item $P$ can be covered by $\sim_{\log} \delta^{-1/2}$ balls of diameter $\sim \delta^{1/2}$ in the collection $\calB_{j}^{G}$, which I will henceforth denote simply by $\calB$. For every $B \in \calB$, the set $P$ contains a special point $p_{B}$, and the set $P_{\delta^{1/2}} \subset P$ of these special points is a $(\delta^{1/2},1)$-set of cardinality $|P_{\delta^{1/2}}| \sim_{\log} \delta^{-1/2}$. 
%\item Since $P_{\delta^{1/2}} \subset P \subset K$ and $\delta^{1/2} \leq \delta_{0}$, the main counter assumption \eqref{form24} implies that
%\begin{equation}\label{form5} N(\pi_{e}(P_{\delta^{1/2}}),\delta^{1/2}) \leq \delta^{-(s + \epsilon_{0})/2} \end{equation}
%and
%\begin{equation}\label{form22} N(\pi_{e}(P),\delta) \leq \delta^{-s - \epsilon_{0}} \end{equation}
%for $e \in E$. 
%\end{itemize}

\begin{definition}[Dyadic tubes]\label{dyadicTubes} For $\delta = 2^{-k}$, $k \geq 0$, a dyadic $\delta$-tube is a set of the form $\calD(Q)$, where $Q \subset [0,1)^{2}$ is a dyadic square of side-length $\delta$, and $\calD$ is the point-line duality mapping from Definition \ref{pointLineDuality}. One should view $\calD(Q)$ here as a set of points in $\R^{2}$, not as a family of lines, see Remark \ref{notationalRemark}. The \emph{slope} of a dyadic $\delta$-tube $T = \calD([a + \delta) \times [b + \delta))$ is defined fo be $s(T) := a$ (which is the actual slope of the line $\calD(a,b) \subset T$). 

The definition above is convenient for the reason that dyadic tubes have a dyadic structure (unlike ordinary tubes). For dyadic numbers $0 < \delta_{1} < \delta_{2} < 1$, the \emph{$\delta_{2}$-parent} of a $\delta_{1}$-tube $T_{1} = \calD(Q_{1})$ is the unique tube $T_{2} = \calD(Q_{2})$ such that $Q_{2}$ is a dyadic square of side-length $\delta_{2}$ containing $Q_{1}$. The \emph{$\delta_{1}$-children} of a $\delta_{2}$-tube are defined in the obvious way: note that a $\delta_{1}$-tube $T_{1}$ is the child of a $\delta_{2}$-tube $T_{2}$, if and only if $T_{1} \subset T_{2}$. Given a collection of $\delta_{1}$-tubes $\calT$, I write $N(\calT,\delta_{2})$ for the cardinality of the family of $\delta_{2}$-parents of the tubes in $\calT$ (that is, minimal family of $\delta_{2}$-tubes containing all the tubes in $\calT$).

From geometric intents and purposes, dyadic tubes are locally very similar to ordinary tubes: if $T$ is a dyadic $\delta$-tube, then the intersection $T \cap B(0,R)$ is contained in an ordinary $C_{R}\delta$-tube with the same slope, for some constant $C_{R} \geq 1$ depending only on $R$. Conversely, an ordinary $\delta$-tube can be covered by a bounded number of dyadic $\delta$-tubes with nearly the same slope (with an error of $\lesssim \delta$).

Assume that $\calT_{p}$ is a collection of dyadic $\delta$-tubes, each containing a point $p \in \R^{2}$, and let $0 < s < 1$. Then $\calT_{p}$ is called a $(\delta,s,C)$-set, if the set of slopes $s(\calT_{p}) := \{s(T) : T \in \calT_{p}\}$ is a $(\delta,s,C)$-subset of $\R$. The same definition is used, if $p$ is a $\delta$-ball instead of a point, and "containing $p$" is replaced by "intersecting $p$".

Similarly, a family of ordinary $\delta$-tubes, all containing a common point, is called a $(\delta,s,C)$-set, if the directions of the tubes (on $S^{1}$) form a $(\delta,s,C)$-set. \end{definition}

As stated above Definition \ref{dyadicTubes}, the plan is to use the counter assumptions \eqref{KF} and \eqref{KPi} to accompany $P$ with a finite family of dyadic $\delta$-tubes $\calT$. Finding $\calT$ is a somewhat lengthy task, so I start by clarifying: what exactly is required of these tubes to end up with a contradiction? In brief, the tubes need violate the next theorem:

\begin{thm}\label{mainIntro} Given $0 < s < 1$, there exists an $\epsilon = \epsilon(s) > 0$ such that the following holds for small enough dyadic numbers $\delta > 0$ (depending only on $s$). Assume that $P \subset B(0,1) \subset \R^{2}$ is a $(\delta,1,\delta^{-\epsilon})$-set with cardinality $|P| \geq \delta^{-1 + \epsilon}$ and assume that
\begin{equation}\label{deltaHalfAssumptionIntro} N(P,\delta^{1/2}) \leq \delta^{-1/2 - \epsilon}. \end{equation}
Assume that $\calT$ is a collection of dyadic $\delta$-tubes such that for every $p \in P$, there exists a sub-family $\calT_{p} \subset \{T \in \calT : p \in T\}$, which is a $(\delta,s,\delta^{-\epsilon})$-set of cardinality $|\calT_{p}| \geq \delta^{-s + \epsilon}$. Then either
\begin{equation}\label{alternativeIntro} |\calT| \geq \delta^{-2s - \epsilon} \quad \text{or} \quad N(\calT,\delta^{1/2}) \geq \delta^{-s - \epsilon}. \end{equation}
\end{thm}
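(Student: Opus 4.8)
The plan is to argue by contradiction: assume that both alternatives in \eqref{alternativeIntro} fail, so that $\calT$ is covered by $\lesssim \delta^{-2s-\epsilon}$ dyadic $\delta$-tubes and by $\lesssim \delta^{-s-\epsilon}$ dyadic $\delta^{1/2}$-tubes, and then extract from the incidence data a configuration that additive combinatorics forbids. The first — and, I expect, most laborious — phase is a \emph{rescaling reduction} to a special geometric situation. Since $P$ has $\sim \delta^{-1}$ points but only $\lesssim \delta^{-1/2-\epsilon}$ of the $\delta^{1/2}$-balls meeting $P$ are nonempty, a pigeonholing argument produces a large sub-collection of $\delta^{1/2}$-balls each containing $\gtrsim \delta^{-1/2}$ points of $P$; dually, since $\calT$ is covered by few $\delta^{1/2}$-tubes, a typical $\delta^{1/2}$-tube $T_{0}$ must both (i) contain $\gtrsim \delta^{-1/2}$ points of $P$ and (ii) be such that each such point $p$ is incident to $\gtrsim \delta^{-s+\epsilon}$ of its $\delta$-tubes $T \in \calT_{p}$ that are essentially contained in $T_{0}$ (this uses the $(\delta,s)$-growth of $\calT_{p}$ together with the $\delta^{1/2}$-tube covering bound to rule out too many tubes "escaping" $T_{0}$). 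The standard affine map $A$ sending $T_{0}$ to a fixed unit rectangle rescales $\delta$ to $\delta^{1/2}$: the points $P\cap T_{0}$ spread out to an essentially $(\delta^{1/2},1)$-set $A(P\cap T_0)$ inside the unit square, and the $\delta$-tubes $T\subset T_0$ become $\delta^{1/2}$-tubes $A(T)$. The upshot is a new incidence problem at scale $\delta^{1/2}$ with the \emph{same numerology} as the original, but now the point set has the structure of a "quasi-product set" — this is what makes the tools of additive combinatorics applicable in the next phase.

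In the second phase, one proves an incidence estimate for quasi-product sets (this is Proposition \ref{productProp}, which I would invoke as a black box if it is stated earlier, otherwise prove directly). The idea is that the incidence data $\{(p,T) : p \in A(P\cap T_0),\ T \in A(\calT_p)\}$ has too many incidences relative to $|A(P\cap T_0)| \sim (\delta^{1/2})^{-1}$ and the bound $|A(\calT)| \lesssim (\delta^{1/2})^{-2s-\epsilon}$ that survives the rescaling. Passing through point-line duality, a rich incidence count between a product-structured point set and a small tube set translates, after applying the Balog–Szemerédi–Gowers theorem and the Plünnecke–Ruzsa inequalities, into a $\delta^{1/2}$-discretised set $X$ of "slopes" (or directions) of dimension close to $s$ whose projections $\pi_\theta(X)$ also have dimension close to $s$ for a set of directions $\theta$ of dimension close to $s$ — precisely the configuration whose non-existence is the content of Bourgain's discretised projection theorem. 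Quantitatively, Theorem 5 of \cite{Bo} gives a $\sigma = \sigma(s) > 0$ such that such a set must have projections of dimension $\geq s + \sigma$ outside a small exceptional set of directions; choosing $\epsilon = \epsilon(s)$ small enough relative to this $\sigma$ yields the contradiction.

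The main obstacle, I expect, is not the additive-combinatorial endgame (which is essentially a citation to \cite{Bo} once the right object has been isolated) but the bookkeeping in the rescaling phase: one must track the $(\delta,s,C)$-set hypotheses through the affine map $A$, verify that $A(P\cap T_0)$ genuinely enjoys enough product structure to feed into Proposition \ref{productProp} (the map $A$ does not literally produce a Cartesian product, only something comparable up to the $\delta^{-\epsilon}$ losses), and ensure at every pigeonholing step that the polynomial-in-$\delta$ gains dominate the $\log(1/\delta)$ and $\delta^{-\epsilon}$ losses. A secondary subtlety is the interplay between dyadic and ordinary tubes — the covering number $N(\calT,\delta^{1/2})$ must be controlled in the dyadic sense throughout — but this is handled by the comparison remarks following Definition \ref{dyadicTubes}. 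Finally, one must be careful that the scale $\delta^{1/2}$ is still "small enough" for the cited discretised theorem to apply, which is why $\delta$ is taken as small as needed depending only on $s$.
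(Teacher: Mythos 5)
Your two-phase outline --- rescale along a special $\delta^{1/2}$-tube $T_{0}$ to a quasi-product configuration, then feed the rescaled incidence data into an additive-combinatorial bound (Balog--Szemer\'edi--Gowers, Pl\"unnecke--Ruzsa, Bourgain's discretised projection theorem) --- is exactly the route the paper takes, so at the level of strategy the proposal is on target.

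There is, however, one concrete error in the first phase. You claim that a typical $\delta^{1/2}$-tube $T_{0}$ can be chosen so that each of the $\gtrsim \delta^{-1/2}$ relevant points $p \in P \cap T_{0}$ is incident to $\gtrsim \delta^{-s+\epsilon}$ tubes of $\calT_{p}$ essentially contained in $T_{0}$, and you attribute this to the ``$(\delta,s)$-growth of $\calT_{p}$ together with the $\delta^{1/2}$-tube covering bound''. But the $(\delta,s)$-set condition on $\calT_{p}$ works \emph{against} you here: by Lemma~\ref{auxLemma}, since the slopes $s(\calT_{p})$ form a $(\delta,s)$-set, \emph{at most} $\lessapprox \delta^{-s/2}$ tubes of $\calT_{p}$ can lie inside any fixed $\delta^{1/2}$-tube. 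The correct count --- and the one that makes the numerology of Proposition~\ref{productProp} work out at scale $\delta^{1/2}$ after the affine rescaling $\operatorname{Aff}(x,y)=(\delta^{-1/2}x,y)$ --- is $\approx \delta^{-s/2}$, which becomes $(\delta^{1/2})^{-s}$ tubes per point, as required; see \eqref{form8}. Relatedly, the quasi-product structure does not emerge automatically from the rescaling: the paper needs the potential-theoretic Lemma~\ref{projections} to guarantee that the slices of $P\cap T_{0}$ project to genuine $(\delta,s)$-sets in the direction perpendicular to $T_{0}$, and a separate argument (the $M_{T}\lessapprox\delta^{(s-1)/2}$ bound from \eqref{form7}, driven by $|\calT|\lessapprox\delta^{-2s}$) to ensure the ``base'' of ball-representatives along $T_{0}$ is a $(\delta^{1/2},1-s)$-set. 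Those are precisely the two hypotheses --- $s$-dimensional fibers over a $(1-s)$-dimensional base --- that Proposition~\ref{productProp} needs with $\tau = 1-s$, and they do not come for free; without them the rescaled configuration need not contradict anything.
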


Note that the assumptions of Theorem \ref{mainIntro}, in particular \eqref{deltaHalfAssumptionIntro}, are valid for the set $P$ constructed earlier, by properties (P1)--(P2). So, to prove Theorems \ref{mainGeneralised} and \ref{mainProjections2}, it remains to use \eqref{KF} and \eqref{KPi} to find a family of dyadic $\delta$-tubes $\calT$ which violates Theorem \ref{mainIntro}. I first need to record a few easy geometric lemmas about points and dyadic $\delta$-tubes:

\begin{lemma}\label{tubesAndSlopes} Assume that $\calT_{p}$ is a collection of dyadic $\delta$-tubes, each containing a point $p \in B(0,1)$. Then $|s(\calT_{p})| \sim |\calT_{p}|$. \end{lemma}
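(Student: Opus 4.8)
The inequality $|s(\calT_{p})| \leq |\calT_{p}|$ is trivial, since $T \mapsto s(T)$ is a well-defined map on $\calT_{p}$. The plan for the reverse bound $|\calT_{p}| \lesssim |s(\calT_{p})|$ is to show that this slope map has fibres of size at most $2$: for each dyadic number $a$, there are at most two dyadic $\delta$-tubes $T$ with $s(T) = a$ and $p \in T$. Summing this over $a \in s(\calT_{p})$ then gives $|\calT_{p}| \leq 2 |s(\calT_{p})|$, and combined with the trivial bound yields $|s(\calT_{p})| \leq |\calT_{p}| \leq 2|s(\calT_{p})|$, i.e. $|s(\calT_{p})| \sim |\calT_{p}|$.

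To carry this out, fix a dyadic $a$ and write $p = (p_{1},p_{2})$. By Definition \ref{dyadicTubes}, a dyadic $\delta$-tube with slope $a$ is of the form $T_{b} := \calD([a,a+\delta) \times [b,b+\delta))$ with $b \in \delta\Z \cap [0,1)$; by the definition of the duality map $\calD$, one has $p \in T_{b}$ precisely when there exists $a' \in [a,a+\delta)$ and $b' \in [b,b+\delta)$ with $p_{2} = a'p_{1} + b'$, that is, precisely when the set $I := \{p_{2} - a'p_{1} : a' \in [a,a+\delta)\}$ meets $[b,b+\delta)$. Since $p \in B(0,1)$ we have $|p_{1}| \leq 1$, so $I$ is an interval of length $|p_{1}|\delta \leq \delta$. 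An interval of length at most $\delta$ can intersect at most two of the pairwise disjoint dyadic intervals $\{[b,b+\delta) : b \in \delta\Z\}$; hence at most two tubes $T_{b}$ contain $p$, as claimed.

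There is no real obstacle here beyond unwinding the definition of $\calD$ and the dyadic structure: the only point where the hypothesis $p \in B(0,1)$ is used is the bound $|p_{1}| \leq 1$, which is exactly what forces the slope map to be (uniformly) finite-to-one. I expect the whole argument to be a short, elementary calculation once these observations are in place.
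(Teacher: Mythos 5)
Your proof is correct and takes essentially the same approach as the paper: both arguments hinge on the observation that $|p_{1}| \leq 1$ bounds the range of admissible $b$'s for each fixed slope $a$ by a constant multiple of $\delta$, so the slope map $T \mapsto s(T)$ is uniformly finite-to-one on $\calT_{p}$. The paper argues via a pair of tubes and arrives at the bound $|b_{1}-b_{2}| < 4\delta$ (hence at most four tubes per slope), while your direct description of the set $I = \{p_{2} - a'p_{1} : a' \in [a, a+\delta)\}$ as an interval of length $|p_{1}|\delta \leq \delta$ is a slightly cleaner phrasing of the same computation and gives the sharper constant $2$.
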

\begin{proof} Write $S := s(\calT_{p})$. Clearly $|S| \leq |\calT_{p}|$, so it suffices to prove that $|\calT_{p}| \lesssim |S|$. To this end, I will show that only four tubes in $\calT_{p}$ can share a common slope. Assume that $a \in S$, and $T_{1} = \calD([a,a + \delta) \times [b_{1},b_{1} + \delta))$ and $T_{2} = \calD([a,a + \delta) \times [b_{2},b_{2} + \delta))$ both belong to $\calT_{p}$, so that $p = (p_{x},p_{y}) \in T_{1} \cap T_{2}$. By definition of $T_{1},T_{2}$, this means that there exist numbers $a',b_{1}',a'',b_{2}''$ with $\max\{|a' - a|,|a'' - a|,|b_{1}' - b_{1}|,|b_{2}'' - b_{2}|\} < \delta$ such that
\begin{displaymath} p_{y} = a'p_{x} + b_{1}' \quad \text{and} \quad p_{y} = a''p_{x} = b_{2}''. \end{displaymath}
It follows that $|b_{1} - b_{2}| \leq |p_{x}||a' - a''| + 2\delta < 4\delta$. This completes the proof. 
  \end{proof}  

\begin{lemma}\label{auxLemma} Assume that $0 < \delta_{1} < \delta_{2}$ are dyadic numbers, $p \in B(0,1)$, and $T_{0} = \calD(Q_{0}) = \calD([a,a + \delta_{2}) \times [b,b + \delta_{2}))$ is a dyadic $\delta_{2}$-tube. Further, assume that $\calT_{p}$ is a $(\delta_{1},s,C)$-set of dyadic $\delta_{1}$-tubes $T$ with $p \in T \subset T_{0}$. Then $|\calT_{p}| \lesssim C(\delta_{2}/\delta_{1})^{s}$.
\end{lemma}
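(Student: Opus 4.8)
The plan is to reduce the whole statement to the one-dimensional estimate for the slope set $s(\calT_p)$. First I would unwind the inclusion $T \subset T_0$ using the dyadic structure recorded in Definition~\ref{dyadicTubes}: a dyadic $\delta_1$-tube $T = \calD(Q)$ is contained in $T_0 = \calD(Q_0)$ if and only if the dyadic $\delta_1$-square $Q$ is contained in $Q_0 = [a,a+\delta_2) \times [b,b+\delta_2)$. Writing $Q = [a',a'+\delta_1) \times [b',b'+\delta_1)$, the inclusion $Q \subset Q_0$ forces $a' \in [a, a + \delta_2 - \delta_1] \subset [a, a+\delta_2)$; since $s(T) = a'$ by definition, the entire slope set $s(\calT_p)$ lies in the interval $[a, a+\delta_2)$, which has length $\delta_2$.

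Next I would invoke Lemma~\ref{tubesAndSlopes}, whose hypotheses hold because every tube in $\calT_p$ contains the common point $p \in B(0,1)$; this gives $|\calT_p| \sim |s(\calT_p)|$. Finally, since $\calT_p$ is a $(\delta_1,s,C)$-set, the slope set $S := s(\calT_p)$ is by definition a $(\delta_1,s,C)$-subset of $\R$, so the growth bound \eqref{deltaTSet} applies to it. Choosing $x := a$ and $r := \delta_2$ — which is legitimate because $\delta_1 \le \delta_2 \le 1$ (recall $\delta_2$ is dyadic and $Q_0 \subset [0,1)^2$) — the open ball $B(a,\delta_2)$ contains all of $S$ by the first paragraph, so $|S| = |S \cap B(a,\delta_2)| \le C(\delta_2/\delta_1)^{s}$. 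Combining with $|\calT_p| \sim |S|$ yields $|\calT_p| \lesssim C(\delta_2/\delta_1)^{s}$, as claimed.

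I expect no real obstacle here: the only point that needs a little care is matching the dyadic-child description of the containment $T \subset T_0$ with the definition of the slope $s(T)$, so as to be sure that the slopes are genuinely trapped in a single $\delta_2$-interval. Once that is in place, the conclusion is an immediate application of the $(\delta_1,s,C)$-set property at scale $r = \delta_2$, together with the elementary count of Lemma~\ref{tubesAndSlopes}.
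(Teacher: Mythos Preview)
Your proposal is correct and follows essentially the same route as the paper's proof: both arguments observe that $T \subset T_{0}$ forces $Q \subset Q_{0}$ and hence $s(\calT_{p}) \subset [a,a+\delta_{2})$, then apply the $(\delta_{1},s,C)$-bound at scale $\delta_{2}$ to the slope set, and finish with Lemma~\ref{tubesAndSlopes}. Your write-up is a touch more explicit about why $\delta_{2} \le 1$ and about the containment $S \subset B(a,\delta_{2})$, but the logic is identical.
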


\begin{proof} Let $\calQ_{p}$ be the collection of dyadic $\delta_{1}$-squares such that $\calT_{p} = \{\calD(Q) : Q \in \calQ_{p}\}$. Then $Q \subset Q_{0}$, $Q \in \calQ_{p}$, by the assumption $T \subset T_{0}$, $T \in \calT_{p}$. Hence $s(\calT_{p})$ is a $(\delta_{1},s,C)$-subset of $[a,a + \delta_{2})$, and consequently $|s(\calT_{p})| \leq C(\delta_{2}/\delta_{1})^{s}$. The previous lemma completes the proof.\end{proof} 
 
 \begin{lemma}\label{tubeCovering} Assume that $P \subset B(0,1)$ is a set and $0 < \delta_{1} \leq \delta_{2}$ are dyadic numbers. Assume that $a_{2} \in \delta_{2}\Z$, and $P$ can be covered by $M \in \N$ dyadic $\delta_{2}$-tubes $\calD([a_{2},a_{2} + \delta_{2}) \times [b_{j},b_{j} + \delta_{2}))$ with fixed slope $a_{2}$. Then the collection of all dyadic $\delta_{1}$-tubes, which intersect $P$ and have slope in $[a_{2},a_{2} + \delta_{2})$, can be covered by $\lesssim M$ dyadic $\delta_{2}$-tubes with slope $a_{2}$. 
 \end{lemma}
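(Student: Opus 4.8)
The plan is to show that the dyadic $\delta_2$-parent of every dyadic $\delta_1$-tube counted in the statement is forced to be one of at most $\lesssim M$ dyadic $\delta_2$-tubes of slope $a_2$; since each tube is contained in its parent, this family of parents is the desired cover.

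First I would record one elementary computation about the duality $\calD$. Fix a point $p = (p_x, p_y) \in B(0,1)$, so $|p_x| < 1$. By definition of $\calD$, one has $p \in \calD([a, a + \delta) \times [b, b + \delta))$ if and only if there is $a' \in [a, a + \delta)$ with $p_y - a' p_x \in [b, b + \delta)$. Taking $a = a_2$ and $\delta = \delta_2$ and using $|p_x| < 1$, we get $|(a' - a_2) p_x| < \delta_2$, and therefore $p \in \calD([a_2, a_2 + \delta_2) \times [b, b + \delta_2))$ forces $b$ to lie in the fixed interval $(p_y - a_2 p_x - 2\delta_2,\, p_y - a_2 p_x + \delta_2)$. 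In particular, any two dyadic $\delta_2$-tubes of slope $a_2$ containing $p$ have lower endpoints $b$ differing by less than $3\delta_2$.

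Now let $T = \calD([a_1, a_1 + \delta_1) \times [b_1, b_1 + \delta_1))$ be a dyadic $\delta_1$-tube intersecting $P$ with slope $a_1 \in [a_2, a_2 + \delta_2)$. Since $a_2 \in \delta_2 \Z$, the dyadic $\delta_2$-interval containing $a_1$ is exactly $[a_2, a_2 + \delta_2)$, so the $\delta_2$-parent of $T$ has the form $T_2 = \calD([a_2, a_2 + \delta_2) \times [b(T), b(T) + \delta_2))$ for some $b(T) \in \delta_2 \Z$, and $T \subset T_2$. Pick $p \in T \cap P \subset T_2 \cap P$. Because $P$ is covered by the $M$ tubes $\calD([a_2, a_2 + \delta_2) \times [b_j, b_j + \delta_2))$, the point $p$ lies in one of them, say the $j$-th; applying the estimate of the previous paragraph to the two dyadic $\delta_2$-tubes $T_2$ and this $j$-th tube, both of slope $a_2$ and both through $p$, gives $|b(T) - b_j| < 3\delta_2$. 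As $b(T)$ and $b_j$ both lie in $\delta_2 \Z$, this leaves at most five choices of $b(T)$ for each $j$.

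Hence $b(T)$ ranges over a set of at most $5M$ numbers as $T$ ranges over all the dyadic $\delta_1$-tubes in the statement, so the $\delta_2$-parents of these tubes form a family of at most $5M \lesssim M$ dyadic $\delta_2$-tubes of slope $a_2$; each such $T$ sits inside its parent, which is what is claimed. There is no genuine obstacle in this lemma: the only point needing a little care is the bookkeeping with the half-open dyadic intervals and the use of $p \in B(0,1)$ to keep the error term $(a' - a_2) p_x$ of size $< \delta_2$; everything else is immediate from the definitions.
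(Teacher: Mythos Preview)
Your proof is correct and follows essentially the same approach as the paper: both arguments pick a point $p \in T \cap P$, use that $p$ also lies in one of the $M$ given $\delta_2$-tubes, and then compute that the two intercepts must differ by $O(\delta_2)$ because $|p_x| < 1$. Your version is organised slightly more cleanly by working directly with the $\delta_2$-parent of $T$, whereas the paper bounds $|b_1 - b_j|$ for the $\delta_1$-level intercept $b_1$ and then passes to the covering $\delta_2$-tubes; the content is the same.
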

 
 \begin{proof} Assume that $\calD([a_{1},a_{1} + \delta_{1}) \times [b_{1},b_{1} + \delta_{1}))$ is a $\delta_{1}$-tube with slope $a_{1} \in [a_{2},a_{2} + \delta_{2})$, which intersects $P$ at a point $p = (p_{x},p_{y})$. This means that $p_{y} = a_{1}'p_{x} + b_{1}'$ for some $|a_{1}' - a_{1}| < \delta_{1}$, and consequently $|a_{1}' - a_{2}| \leq 2\delta_{2}$. By assumption, $p$ is also covered by one of the dyadic $\delta_{2}$-tubes $\calD([a_{2},a_{2} + \delta_{2}) \times [b_{j},b_{j} + \delta_{2}))$, $1 \leq j \leq M$, which implies that $p_{y} = a_{2}'p_{x} + b_{j}'$ for some $|a_{2}' - a_{2}| < \delta_{2}$ and $|b_{j}' - b_{j}| < \delta_{2}$. It follows that
 \begin{displaymath} |b_{1}' - b_{j}'| = |p_{x}||a_{1}' - a_{2}'| \leq 3\delta_{2}, \end{displaymath}
 and consequently $|b_{1} - b_{j}| \leq 5\delta_{2}$. Now, the $\delta_{2}$-tubes of the form $\calD([a_{2},a_{2} + \delta) \times [b,b + \delta_{2}))$ with $|b - b_{j}| \leq 5\delta$ for some $1 \leq j \leq M$, form the desired cover.
 \end{proof}
  
% \begin{definition}[Incidences] Let $\calT$ be finite family of $\delta$-tubes, and let $P \subset \R^{2}$ be a finite set of points. The \emph{incidences} between $P$ and $\calT$ are, by definition, the pairs
% \begin{displaymath} I(P,\calT) := \{(p,T) \in P \times \calT : p \in T\}. \end{displaymath}
% If $(p,T) \in I(P,\calT)$, I may write that $p$ is incident to $T$ or $T$ is incident to $p$.  \end{definition}
 
Now, finally, everything is set up to accompany $P$ with a family of dyadic $\delta$-tubes $\calT$. The process is slightly different in the cases $K = K_{\pi}$ and $K = P_{\calL}$. 
\subsubsection{The case $P \subset K = K_{\pi}$} Here we use the counter assumption \eqref{KPi}, restated below: 
\begin{equation}\label{KPi2} N(\pi_{e}(K_{\pi}),\delta) \leq \delta^{-s - \epsilon}, \quad e \in E, \: 0 < \delta \leq \delta_{0}, \end{equation}
where $\calH^{s}(E) > 0$. The plan is to find a $(\delta,s)$-subset $E' \subset E$ of near-maximal cardinality and use \eqref{KPi2} to cover $P \subset K_{\pi}$ by a small family of dyadic $\delta$-tubes $\calT_{e}$ (nearly) perpendicular to $e$ for every $e \in E'$. 

Recall that both $\delta$ and $\delta^{1/2}$ are dyadic numbers. Let $E_{\delta} \subset \delta \Z$ be a $(\delta,s,C)$-set of slopes almost perpendicular to the vectors in $E$: more precisely, if $\theta \in E_{\delta}$, the requirement is that any line with slope $\theta$ is perpendicular to some vector in $E(10\delta)$ (the $10\delta$-neighbourhood is only needed to facilitate $E_{\delta} \subset \delta \Z$). By Proposition \ref{deltaSSet} applied to $E$ and some easy tinkering, one can choose $E_{\delta}$ with $|E_{\delta}| \sim \delta^{-s}$ (the implicit constants naturally depend on $\calH^{s}_{\infty}(E) > 0$, which will be treated as an absolute constant). Inequality \eqref{KPi2} then implies that for every $\theta \in E_{\delta}$, the set $K_{\pi}$ can be covered by $\lesssim \delta^{-s - \epsilon}$ dyadic $\delta$-tubes $\calT_{\theta}$ with slope $\theta$. Without loss of generality, all tubes will be assumed to intersect $K_{\pi}$. Let
\begin{displaymath} \calT := \bigcup_{\theta \in E_{\delta}} \calT_{\theta}. \end{displaymath}
Then $\calT$ is a collection of dyadic $\delta$-tubes such that
\begin{itemize}
\item[(T1)] $|\calT| \lesssim \delta^{-2s - \epsilon}$,
\item[(T2)] for every point $p \in P \subset K$, there is a $(\delta,s,C)$-subset $\calT_{p} \subset \{T \in \calT : p \in T\}$ of cardinality $|\calT_{p}| \sim \delta^{-s}$.
\item[(T3)] $N(\calT,\delta^{1/2}) \lesssim \delta^{-s - \epsilon}$.
\end{itemize}
Here (T1) follows from $|E_{\delta}| \sim \delta^{-s}$ and $|\calT_{\theta}| \lesssim \delta^{-s - \epsilon}$. To see the (T2), fix $p \in P \subset K = K_{\pi}$ and observe that $p \in T$ for some tube $T \in \calT_{\theta}$, for every $\theta \in E_{\delta}$. The family $\calT_{p}$ can be picked among those tubes $T$. The the claim (T3) requires a bit of extra work. I first recall the following basic estimate:
\begin{proposition}\label{kaufmanProp} Assume that $K \subset B(0,1)$ with $\calH^{1}(K) > 0$ and $0 < t < 1$. Then
 \begin{displaymath} N(\{e \in S^{1} : N(\pi_{e}(K),\delta) \leq \delta^{-t}\},\delta) \lesssim_{\log} \delta^{-t}, \end{displaymath}
 where the implicit constants depend on $\calH^{1}(K)$. 
 \end{proposition}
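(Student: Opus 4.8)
The plan is to run the standard potential-theoretic argument behind Kaufman's projection theorem, keeping careful track of the logarithmic loss that necessarily appears at the critical exponent $t < 1$. Write $E_{\delta} := \{e \in S^{1} : N(\pi_{e}(K),\delta) \leq \delta^{-t}\}$, and fix a maximal $\delta$-separated subset $\{e_{1},\dots,e_{N}\} \subset E_{\delta}$; then $E_{\delta} \subset \bigcup_{i} B(e_{i},\delta)$, so $N(E_{\delta},\delta) \leq N$ and it suffices to bound $N$. As earlier in this section, Frostman's lemma provides a measure $\mu$ supported on $K$ with $\mu(K) = 1$ and $\mu(B(x,r)) \lesssim r$, the implicit constant depending on $\calH^{1}(K)$.

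The first step is to turn the covering hypothesis into a lower bound on a $\delta$-truncated energy. Fix $i$. Since $N(\pi_{e_{i}}(K),\delta) \leq \delta^{-t}$, the set $\pi_{e_{i}}(K)$ — hence $\spt(\pi_{e_{i}\#}\mu)$ — is covered by a disjoint family of $\lesssim \delta^{-t}$ intervals $I_{1},\dots,I_{M_{i}}$ of length $\delta$. Cauchy--Schwarz gives $1 = \sum_{j} \pi_{e_{i}\#}\mu(I_{j}) \leq M_{i}^{1/2}\big(\sum_{j}\pi_{e_{i}\#}\mu(I_{j})^{2}\big)^{1/2}$, so $\sum_{j}\pi_{e_{i}\#}\mu(I_{j})^{2} \gtrsim \delta^{t}$; on the other hand, disjointness and $|I_{j}| = \delta$ yield $\sum_{j}\pi_{e_{i}\#}\mu(I_{j})^{2} \leq (\mu \times \mu)\{(x,y) : |\pi_{e_{i}}(x - y)| \leq \delta\}$. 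Summing over $i$ and swapping summation with integration,
\[ N\delta^{t} \lesssim \iint \#\{i : |\pi_{e_{i}}(x - y)| \leq \delta\}\, d\mu(x)\,d\mu(y). \]
For fixed $x \neq y$, the identity $|\pi_{e}(x-y)| = |x-y|\,|\cos\angle(e,x-y)|$ shows that $\{e : |\pi_{e}(x-y)| \leq \delta\}$ is contained in two arcs of length $\lesssim \min\{1,\delta/|x-y|\}$, so a $\delta$-separated set meets it in $\lesssim 1/\max\{\delta,|x-y|\}$ points. Substituting, and using $\mu(B(x,\delta)) \lesssim \delta$ together with the dyadic bound $\int_{|x-y| > \delta} |x-y|^{-1}\, d\mu(y) \lesssim \sum_{\delta \leq 2^{-k} \leq 1} 2^{k}\mu(B(x,2^{-k})) \lesssim \log(1/\delta)$, I obtain $\int \max\{\delta,|x-y|\}^{-1}\, d\mu(y) \lesssim \log(1/\delta)$ uniformly in $x$, whence $N\delta^{t} \lesssim_{\log} 1$, which is the claim.

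The only genuinely delicate point — and the source of the $\lesssim_{\log}$ in place of $\lesssim$ — is that the $1$-energy $\iint |x-y|^{-1}\, d(\mu\times\mu)$ of a linearly growing measure diverges logarithmically, so one cannot avoid truncating the Riesz kernel at scale $\delta$; this truncation is precisely what produces the harmless $\log(1/\delta)$ factor. The remaining ingredients — disjointification of the interval cover, Cauchy--Schwarz, and the arc-length count — are entirely routine.
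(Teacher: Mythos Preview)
Your proof is correct; it is the classical potential-theoretic argument (Frostman measure, Cauchy--Schwarz on the projected measure, then the truncated $1$-energy of a linearly growing measure is $\lesssim \log(1/\delta)$). The paper does not write out a proof here but instead cites \cite{O} and remarks that the statement also follows from Lemma~\ref{2sBound}; unwinding that lemma's proof in the present setting gives precisely your argument, since the key step there is the bound $\sum_{p\neq q}|\calT_{p}\cap\calT_{q}|\lesssim\sum_{p\neq q}|p-q|^{-1}\lesssim_{\log}\delta^{-2}$, which is the discrete counterpart of your truncated-energy estimate.
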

 This is, for instance, inequality (1.2) in \cite{O}, and the proof can be found on p. 9 of the same paper. The proposition also easily follows from Lemma \ref{2sBound} below. Now, apply Proposition \ref{kaufmanProp} with $t = s + \epsilon$, at scale $\delta^{1/2}$, and the set $K_{\pi}$. The conclusion is that
 \begin{equation}\label{form30} N(E_{\delta},\delta^{1/2}) \lesssim N(\{e \in S^{1} : N(\pi_{e}(K_{\pi}),\delta^{1/2}) \leq C\delta^{-(s + \epsilon)/2}\},\delta^{1/2}) \lesssim_{\log} \delta^{-(s + \epsilon)/2}. \end{equation}
The first inequality follows from the fact that if $\theta \in E_{\delta}(\delta^{1/2})$, then a unit vector $e_{\theta}$ parallel to a line with slope $\theta$ lies at distance $\lesssim \delta^{1/2}$ from some vector $e \in E^{\perp}$. Then, if $e_{\theta}^{\perp},e^{\perp}$ are perpendicular to $e_{\theta},e$, one has $|e_{\theta}^{\perp} - e^{\perp}| \lesssim \delta^{1/2}$, and
\begin{equation}\label{form31} N(\pi_{e_{\theta}^{\perp}}(K_{\pi}),\delta^{1/2}) \lesssim N(\pi_{e^{\perp}}(K_{\pi}),\delta^{1/2}) \leq \delta^{-(s + \epsilon)/2} \end{equation}
by the definition of $E$ (and $\delta^{1/2} \leq \delta_{0}$). This means that $e_{\theta}^{\perp}$ belongs to the set in the middle of \eqref{form30} for large enough $C \geq 1$, which implies the first inequality of \eqref{form30} for the vectors $e_{\theta}$, $\theta \in E_{\delta}$ -- and then for the points $\theta \in E_{\delta}$. 

With \eqref{form30} in hand, pick a collection $\{\theta_{j}\}_{j \in \mathcal{J}} \subset \delta^{1/2}\Z$ such that $|\mathcal{J}| \lesssim \delta^{-(s + \epsilon)/2}$ and $E_{\delta}$ is contained in the union of the dyadic intervals $[\theta_{j},\theta_{j} + \delta^{1/2})$. The numbers $\theta_{j}$ can be picked at distance $\leq \delta^{1/2}$ from some point in $E_{\delta}$, so \eqref{form31} applies with $\theta = \theta_{j}$: the conclusion is that $K_{\pi}$ can be covered by $\lesssim \delta^{-(s + \epsilon)/2}$ dyadic $\delta^{1/2}$-tubes with slope $\theta_{j}$. Consequently, by Lemma \ref{tubeCovering}, all the tubes in the families $\calT_{\theta}$, with $\theta \in E_{\delta} \cap [\theta_{j},\theta_{j} + \delta^{1/2})$, can be covered by $\lesssim \delta^{-(s + \epsilon)/2}$ dyadic $\delta^{1/2}$-tubes of slope $\theta_{j}$. It follows that all the tubes in $\calT$ can be covered by $\lesssim \delta^{-(s + \epsilon)/2} \cdot \delta^{-(s + \epsilon)/2} = \delta^{-s - \epsilon}$ dyadic $\delta^{1/2}$-tubes, as claimed in (T3). 

\subsubsection{The case $P \subset K = P_{\calL}$} This case is simpler: again, the aim is to find a family of dyadic $\delta$-tubes $\calT$ satisfying the conditions (T1)--(T3). Recall the main counter assumption \eqref{KF}:
\begin{equation}\label{KF+} N(K_{F},\delta) \leq \delta^{-2s - \epsilon}, \qquad 0 < \delta \leq \delta_{0}. \end{equation}
Also, recall that $\calL$ is a compact set of lines $L$, which form a large angle with the $y$-axis, and with the property that $\calH^{s}_{\infty}(K_{F} \cap L) \geq c > 0$. By (P1), $P \subset P_{\calL}$ is a $(\delta,1,C)$-set with $C \sim_{\log} 1$ of cardinality $|P| \sim_{\log} \delta^{-1}$. 

I record a small observation about the point-line duality: 
\begin{lemma}\label{dualityLemma} Assume that $(c,d) \in \calD(a,b)$. Then $(a,b) \in \calD(-c,d)$. \end{lemma}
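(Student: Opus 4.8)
The plan is simply to unwind the definition of the duality map. By Definition \ref{pointLineDuality}, the line $\calD(a,b)$ is $\{(x,y) : y = ax + b\}$, so the hypothesis $(c,d) \in \calD(a,b)$ is equivalent to the single scalar identity
\begin{displaymath} d = ac + b. \end{displaymath}
I would then rearrange this linear equation to isolate $b$, obtaining $b = -ca + d = (-c) \cdot a + d$. By Definition \ref{pointLineDuality} again, this last equation says precisely that the point $(a,b)$ lies on the line $\calD(-c,d) = \{(x,y) : y = (-c)x + d\}$, i.e. $(a,b) \in \calD(-c,d)$, which is the desired conclusion.

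There is no genuine obstacle here: the statement is a bookkeeping identity recording how an incidence between a point and a line behaves when the roles of "point" and "line" are exchanged under $\calD$ (the sign flip in the first coordinate being the only subtlety, and one that is forced by the fact that $\calD$ is an involution up to this reflection). I expect the lemma to be used below to convert geometric information about which dyadic $\delta$-tubes contain a given point $p \in P_{\calL}$ into dual information about how the lines of $\calL$ meet the relevant dual set, so the proof need only consist of the two displayed lines above.
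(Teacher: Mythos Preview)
Your proof is correct and is essentially identical to the paper's own argument: both unwind the definition $\calD(a,b) = \{y = ax + b\}$ to get $d = ac + b$, rearrange to $b = (-c)a + d$, and read off $(a,b) \in \calD(-c,d)$.
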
 
\begin{proof} By assumption $d = ac + b$, or $b = (-c)a + d$. Hence $(a,b) \in \calD(-c,d)$. \end{proof}

Note that $L_{p} := \calD(p) \in \calL$ for all $p \in P \subset P_{\calL}$. It follows from $\calH^{s}_{\infty}(K_{F} \cap L_{p}) \geq c$ and Proposition \ref{deltaSSet} that $K_{F} \cap L_{p}$ contains a $(\delta,s,C)$-set with $C \sim 1$ and cardinality $\sim \delta^{-s}$ (I treat $c$ as an absolute constant). Let $\calQ_{p}' = \{(a_{i},a_{i} +  \delta] \times [b_{i}, b_{i} + \delta)\}_{i \in \mathcal{I}}$ be the collection of (not quite dyadic) $\delta$-squares of the form $(a, a + \delta] \times [b,b + \delta)$ with $a,b \in \delta \Z$, which contain a point in the said $(\delta,s,C)$-set on $K_{F} \cap L_{p}$. Then $\calQ_{p} := \{[-a_{i} - \delta,a_{i}) \times [b_{i}, b_{i} + \delta)\}_{i \in \mathcal{I}}$ is a collection of dyadic squares. Since $L_{p}$ is quantitatively non-vertical, the numbers $a_{i}$ form a $(\delta,s,C)$-set, and hence $\calT_{p} := \{\calD(Q) : Q \in \calQ_{p}\}$ is a $(\delta,s,C)$-set of dyadic $\delta$-tubes by definition. Each tube $\calD([-a_{i} - \delta,a_{i}) \times [b_{i},b_{i} + \delta)) \in \calT_{p}$ moreover contains $p$, since $(a_{i},a_{i} + \delta] \times [b_{i},b_{i} + \delta)$ contains a point $(x_{1},x_{2}) \in L_{p} = \calD(p)$ by definition, and then 
\begin{displaymath} p \in \calD(-x_{1},x_{2}) \in \calD([-a_{i} - \delta,a_{i}) \times [b_{i},b_{i} + \delta)) \end{displaymath}
by Lemma \ref{dualityLemma}. Writing
\begin{displaymath} \calT := \bigcup_{p \in P} \calT_{p}, \end{displaymath}
the condition (T2) is automatically valid. Note that $\calT$ consists of $\calD$-images of $\delta$-squares $Q$ meeting $\{(-x,y) : (x,y) \in K_{F}\}$. Hence $|\calT| \lesssim \delta^{-2s - \epsilon}$ by \eqref{KF+}, which gives (T1). Finally, all these $\delta$-squares can be covered by $\lesssim \delta^{-s - \epsilon/2}$ dyadic squares of side-length $\delta^{1/2}$, by \eqref{KF+} at scale $\delta^{1/2}$. This gives $N(\calT,\delta^{1/2}) \lesssim \delta^{-s - \epsilon/2}$, which is a little bit better than (T3). 

\subsection{Concluding the proofs of Theorems \ref{mainGeneralised} and \ref{mainProjections2}}\label{contradictionSection} In both cases $K = K_{\pi}$ and $K = \calP_{\calL}$, a finite set $P$ and a collection of tubes $\calT$ have now been found, satisfying (P1)--(P2) and (T1)--(T3), respectively. The counter assumptions \eqref{KF} and \eqref{KPi} were heavily used. Now, to conclude the proofs of both Theorems \ref{mainGeneralised} and \ref{mainProjections2}, it suffices to show that $P$ can $\calT$ cannot exist, for sufficiently small $\epsilon > 0$. This follows from Theorem \ref{mainIntro}. Indeed, we already observed earlier that $P$ satisfies the hypotheses of Theorem \ref{mainIntro}, by (P1)--(P2). Moreover, the condition (T2) is even slightly better than what Theorem \ref{mainIntro} requires from $\calT$, and conditions (T1) and (T3) literally state that the main conclusion \eqref{alternativeIntro} of Theorem \ref{mainIntro} fails (assuming that $\epsilon > 0$ is sufficiently small). So, a contradiction has been reached, and the proofs of Theorems \ref{mainGeneralised} and \ref{mainProjections2} are complete.

\section{An incidence bound for points and tubes}\label{incidenceSection}

The purpose of this section is to prove Theorem \ref{mainIntro}, stated below as Theorem \ref{main} for convenience. I start with a simple -- and well-known -- incidence bound: heuristically, Theorem \ref{main} can then be viewed as an $\epsilon$-improvement of this "trivial" bound, although the hypotheses are somewhat stronger.

 \begin{lemma}\label{2sBound} Let $0 < s < 1$ and assume that $\epsilon > 0$ is small enough (depending on $s$ only). Assume that $P \subset B(0,1) \subset \R^{2}$ is a $(\delta,1,\delta^{-\epsilon})$-set with $|P| \geq \delta^{-1 + \epsilon}$, and $\calT$ is a family of dyadic $\delta$-tubes. Assume that for every point $p \in P$, there exists a sub-family $\calT_{p} \subset \{T \in \calT : p \in T\}$ with $|\calT_{p}| \geq \delta^{-s + \epsilon}$. Then $|\calT| \gtrsim_{\log} \delta^{-2s + 6\epsilon}$. \end{lemma}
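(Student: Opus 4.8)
The plan is to run a double-counting argument on the set of incidences $I := \{(p,T) : p \in P, \, T \in \calT_p\}$, combined with a Cauchy--Schwarz estimate, where the key input is a pointwise bound on how many points two distinct dyadic $\delta$-tubes can share. First I would record the geometric fact that if $T, T'$ are two dyadic $\delta$-tubes whose slopes differ by $\Delta \geq \delta$, then $T \cap T' \cap B(0,1)$ is contained in a ball of radius $\lesssim \delta/\Delta$; since $P$ is a $(\delta,1,\delta^{-\epsilon})$-set, this gives $|P \cap T \cap T'| \lesssim \delta^{-\epsilon}(\delta/\Delta)/\delta = \delta^{-\epsilon}/\Delta$ when the slopes differ by $\Delta$, and $|P \cap T \cap T'| \lesssim \delta^{-\epsilon}$ always (when the slopes differ by $\lesssim \delta$, only boundedly many such tubes pass through any fixed point by Lemma \ref{tubesAndSlopes}, so this case contributes a bounded multiple).

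Next I would count incidences from below: $|I| = \sum_{p \in P} |\calT_p| \geq |P| \cdot \delta^{-s+\epsilon} \geq \delta^{-1 - s + 2\epsilon}$. To convert this into a lower bound on $|\calT|$, I apply Cauchy--Schwarz in the form
\begin{displaymath} |I|^{2} = \Bigl( \sum_{T \in \calT} |\{p \in P : (p,T) \in I\}| \Bigr)^{2} \leq |\calT| \cdot \sum_{T \in \calT} |P \cap T|^{2} = |\calT| \cdot \sum_{(p,p') } |\{T \in \calT : p, p' \in T\}|, \end{displaymath}
so it remains to bound $\sum_{p,p' \in P} |\{T \in \calT : p,p' \in T, \, T \supset p \text{ via } \calT_p\}|$ from above. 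The diagonal terms $p = p'$ contribute $\sum_p |\calT_p| \cdot (\text{but capped}) \leq |P| \cdot |\calT| $—actually here one must be a little careful; the standard route is instead to fix a point $p$ and count, for each $p' \neq p$, the tubes through both. For a fixed pair $p \neq p'$ at distance $\rho = |p - p'| \geq \delta$, the tubes of $\calT_p$ through $p'$ are determined by a slope lying in an interval of length $\lesssim \delta/\rho$; since $s(\calT_p)$ is a $(\delta,s,\delta^{-\epsilon})$-set, there are $\lesssim \delta^{-\epsilon}(\delta/\rho \delta^{-1})^{s}$—wait, $\lesssim \delta^{-\epsilon} \max\{1,(\delta/\rho)/\delta)\}^{s} = \delta^{-\epsilon} (\rho/\delta)^{0}$... the bound is $\lesssim \delta^{-\epsilon}$ when $\rho \leq \delta^{1-?}$; cleanly, the number of tubes in $\calT_p$ through $p'$ is $\lesssim \delta^{-\epsilon}$ always and $\lesssim \delta^{-\epsilon}(\delta/(\rho \cdot \delta))^{s} = \delta^{-\epsilon}\rho^{-s}$ is not quite it either—so the honest statement is: it is $\lesssim \delta^{-\epsilon}$, since a $(\delta,s,C)$-set of slopes meets an interval of length $\delta/\rho \leq 1$ in $\lesssim C (\delta/(\rho\delta))^{s} = C\rho^{-s}$ points only when $\delta/\rho \geq \delta$, which always holds, giving $\lesssim C\rho^{-s} \leq C\delta^{-s}$, too lossy. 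The efficient argument therefore fixes $p$, sums over $p'$ grouped in dyadic annuli $\rho \sim 2^{-k}\in[\delta,1]$, using $|P \cap B(p,2^{-k})| \lesssim \delta^{-\epsilon}2^{-k}/\delta$ and the slope bound $\lesssim \delta^{-\epsilon}\min\{\delta^{-s}, (2^{-k})^{-?}\}$; optimising, $\sum_{p'} |\{T : p,p'\in T\}| \lesssim \delta^{-2\epsilon} \sum_k (2^{-k}/\delta) \cdot (\delta/2^{-k})^{?}$.

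Rather than belabour the arithmetic, the cleanest execution counts \emph{rich tubes}: let $\calT' := \{T \in \calT : |P \cap T| \geq \tfrac{1}{2}|I|/|\calT| =: \lambda\}$, so that $\sum_{T \in \calT'}|P\cap T| \geq |I|/2$ by popularity. Each $T \in \calT'$ carries $\geq \lambda$ points of $P$, which, being on a single $\delta$-tube and forming part of a $(\delta,1,\delta^{-\epsilon})$-set, are $\delta^{-\epsilon}\lambda$-dense along the tube in a controlled way; then a pair of rich tubes meeting in $\geq \delta^{-\epsilon}$ points must be nearly parallel, and a pigeonhole/Cauchy--Schwarz over slope-intervals of length $\sim \lambda^{-1}\delta$ bounds $|\calT'|$ in terms of $|\calT|$, closing the loop to yield $|\calT| \gtrsim_{\log} \delta^{-2s+6\epsilon}$. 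The main obstacle I anticipate is exactly this bookkeeping: extracting the sharp $\delta^{-2s}$ (rather than a weaker power) requires using the $(\delta,s,\cdot)$-structure of each $\calT_p$ together with the $(\delta,1,\cdot)$-structure of $P$ in a balanced way, and tracking the $\delta^{-\epsilon}$ and logarithmic losses so that the final exponent $-2s + 6\epsilon$ comes out with room to spare. Everything else is a routine incidence count.
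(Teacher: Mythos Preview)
Your opening move---double-counting $I=\{(p,T):T\in\calT_p\}$ and applying Cauchy--Schwarz to $\sum_{T}N_T$---is exactly the paper's approach. The genuine gap is what happens next. You repeatedly try to invoke a $(\delta,s,\delta^{-\epsilon})$-set structure on $\calT_p$, but look at the hypotheses again: this lemma assumes \emph{only} that $|\calT_p|\geq\delta^{-s+\epsilon}$, nothing about non-concentration of slopes. So the string of attempts to bound tubes through $p$ and $p'$ via ``$s(\calT_p)$ meets an interval of length $\delta/\rho$ in $\lesssim C(\cdot)^s$ points'' is using an assumption you do not have, and this is precisely why the arithmetic keeps collapsing on you.

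The fix is simpler than anything you tried. Since all tubes in $\calT_p$ pass through the single point $p\in B(0,1)$, their slopes are $\delta$-separated up to a bounded multiplicity (this is Lemma~\ref{tubesAndSlopes}). A tube through both $p$ and $q$ has slope constrained to an interval of length $\lesssim\delta/|p-q|$, and a $\delta$-separated set meets such an interval in $\lesssim 1/|p-q|$ points. Hence $|\calT_p\cap\calT_q|\lesssim 1/|p-q|$---no $s$, no $\epsilon$. Now the off-diagonal term in your Cauchy--Schwarz is $\sum_{p\neq q}1/|p-q|$, and the $(\delta,1,\delta^{-\epsilon})$-set property of $P$ bounds this by $\lesssim_{\log}\delta^{-2-2\epsilon}$ via the obvious dyadic-annulus decomposition around each $p$. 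Comparing $|I|\geq\delta^{-1-s+2\epsilon}$ with $|I|\lesssim_{\log}|\calT|^{1/2}\delta^{-1-\epsilon}$ gives $|\calT|\gtrsim_{\log}\delta^{-2s+6\epsilon}$ directly; the diagonal term $\sum_p|\calT_p|=|I|$ either gives the stronger bound $|\calT|\gtrsim|I|$ or is dominated. The rich-tubes detour is unnecessary and, as written, too vague to close the argument.
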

 
 \begin{proof} Define the set of \emph{incidences} as follows:
 \begin{displaymath} I(P,\calT) := \{(p,T) \in P \times \calT : T \in \calT_{p}\}. \end{displaymath}
 Note the slightly non-standard definition: the condition $p \in T$ is necessary, but not sufficient, for $(p,T) \in I(P,\calT)$. Evidently
 \begin{displaymath} |I(P,\calT)| \geq \sum_{p \in P} |\calT_{p}| \geq \delta^{-1 - s + 2\epsilon}. \end{displaymath}
 Write
 \begin{displaymath} N_{T} := |\{p \in P : T \in \calT_{p}\}|, \end{displaymath}
 and estimate $|I(P,\calT)|$ from above as follows:
 \begin{align*} |I(P,\calT)| & = \sum_{T \in \calT} N_{T} \leq |\calT|^{1/2} \left( \sum_{T \in \calT} |\{(p,q) \in P \times P : T \in \calT_{p} \cap \calT_{q}\}| \right)^{1/2}\\
 & \lesssim |\calT|^{1/2} \left( \sum_{p \in P} |\calT_{p}| \right)^{1/2} + |\calT|^{1/2} \left( \sum_{p \neq q} |\calT_{p} \cap \calT_{q}| \right)^{1/2}\end{align*} 
 The first term equals $|\calT|^{1/2}|I(P,\calT)|^{1/2}$. So, in case the first term dominates, one obtains $|\calT| \gtrsim |I(P,\calT)| \geq \delta^{-1 - s - 2\epsilon}$, which beats the desired estimate, if $\epsilon > 0$ is small enough. To estimate the second sum, one observes that
 \begin{displaymath} |\calT_{p} \cap \calT_{q}| \leq |\{T \in \calT_{p} : q \in T\}| \lesssim \frac{1}{|p - q|}, \end{displaymath}
 using the fact that the slopes of the tubes in $\calT_{p}$ are $\delta$-separated (first prove the inequality for any family of ordinary $\delta$-tubes, which contain $p$ and have $\delta$-separated slopes, and finally use $P \subset B(0,1)$ to reduce the dyadic case the non-dyadic one). Hence, if the second sum dominates, one obtains
 \begin{displaymath} |I(P,\calT)| \lesssim |\calT|^{1/2} \left( \sum_{p \neq q} \frac{1}{|p - q|} \right)^{1/2} \lesssim_{\log} |\calT|^{1/2} \delta^{-1 - \epsilon}. \end{displaymath}
The second inequality is a standard estimate using the $(\delta,1,\delta^{-\epsilon})$-set hypothesis (for each point $p \in P$, divide the points $q \neq p$ into dyadic annuli around $p$, and make the obvious estimates). Consequently $|\calT| \gtrsim_{\log} \delta^{-2s + 6\epsilon}$, as claimed. 
 \end{proof}
 
 \begin{remark}\label{ballsNotPoints} The lemma works verbatim the same, if the points $p$ are replaced by disjoint $\delta$-balls $B$, and $\calT_{B}$ (instead of $\calT_{p}$) consists of dyadic $\delta$-tubes meeting $B$ (instead of containing $p$). Instead of $|p - q|$, consider the distance between the centres of the relevant balls. \end{remark}
 
The next theorem is just Theorem \ref{mainIntro} repeated for convenience:
 
\begin{thm}\label{main} Given $0 < s < 1$, there exists an $\epsilon = \epsilon(s) > 0$ such that the following holds for small enough dyadic numbers $\delta > 0$ (depending only on $s$). Assume that $P \subset B(0,1) \subset \R^{2}$ is a $(\delta,1,\delta^{-\epsilon})$-set with cardinality $|P| \geq \delta^{-1 + \epsilon}$ and assume that
\begin{equation}\label{deltaHalfAssumption} N(P,\delta^{1/2}) \leq \delta^{-1/2 - \epsilon}. \end{equation}
Assume that $\calT$ is a collection of dyadic $\delta$-tubes such that for every $p \in P$, there exists a sub-family $\calT_{p} \subset \{T \in \calT : p \in T\}$, which is a $(\delta,s,\delta^{-\epsilon})$-set of cardinality $|\calT_{p}| \geq \delta^{-s + \epsilon}$. Then either
\begin{equation}\label{alternative} |\calT| \geq \delta^{-2s - \epsilon} \quad \text{or} \quad N(\calT,\delta^{1/2}) \geq \delta^{-s - \epsilon}. \end{equation}
\end{thm}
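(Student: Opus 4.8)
The plan is to argue by contradiction: assume both alternatives in \eqref{alternative} fail, so that $|\calT| \leq \delta^{-2s-\epsilon}$ and $N(\calT,\delta^{1/2}) \leq \delta^{-s-\epsilon}$ simultaneously, and derive an impossible configuration. The strategy has two phases, as advertised in the outline. In the first phase I will use the covering assumption $N(\calT,\delta^{1/2}) \leq \delta^{-s-\epsilon}$ together with $N(P,\delta^{1/2}) \leq \delta^{-1/2-\epsilon}$ to localise the problem: by pigeonholing over the $\delta^{1/2}$-parent tubes and the $\delta^{1/2}$-balls covering $P$, I expect to find a single dyadic $\delta^{1/2}$-tube $T_0$ containing $\gtrsim_{\log} \delta^{-1/2}$ points of $P$, such that each such point $p \in P \cap T_0$ is incident (through $\calT_p$) to $\gtrsim_{\log} \delta^{-s/2}$ of the $\delta$-tubes $T \in \calT$ with $T \subset T_0$ — here the slope-counting Lemmas~\ref{tubesAndSlopes}, \ref{auxLemma} and the covering Lemma~\ref{tubeCovering} control how many children of a fixed $\delta^{1/2}$-parent can be "used up." The point of passing to $T_0$ is that the affine map $A$ straightening $T_0$ to the unit square sends $P \cap T_0$ to a set in $B(0,1)$ and sends the $\delta$-subtubes to $\delta^{1/2}$-tubes, so after rescaling we face an incidence problem at scale $\delta^{1/2}$ with the \emph{same} numerology, except that now $A(P\cap T_0)$ has a product-like (one coordinate spread at scale $\delta^{1/2}$, one refined) structure — a "quasi-product set."

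In the second phase I would prove an incidence bound for quasi-product sets, i.e. a version of Proposition~\ref{productProp}. The mechanism is additive-combinatorial: the incidence count between a quasi-product set and a family of tubes, after a point-line duality, translates into a statement about the multiplicative energy / sumset growth of the slope set and the "heights." Using the Balog--Szemer\'edi--Gowers theorem one extracts large subsets with controlled doubling, then Pl\"unnecke--Ruzsa bounds the iterated sum- and product-sets, and finally one is in a position to invoke the discretised sum-product / projection theorem (Theorem~5 of \cite{Bo}, the discretised form of Theorem~\ref{bourgain}). That theorem provides an $\epsilon$-gain: a $(\delta,s)$-set of directions cannot keep a $1$-dimensional set's projections all compressed to dimension $s$, which is exactly what the localised configuration would require. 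Concretely, the localised configuration at scale $\delta^{1/2}$ would force a set of $\gtrsim \delta^{-s/2}$ directions under which a $1$-dimensional (at scale $\delta^{1/2}$) set projects into $\lesssim \delta^{-s/2}$ intervals, and the discretised Bourgain theorem says the number of such bad directions must be $\leq \delta^{-(s-\epsilon_0)/2}$ for some $\epsilon_0 = \epsilon_0(s) > 0$; choosing $\epsilon \ll \epsilon_0$ yields the contradiction.

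I expect the main obstacle to be the first phase — the bookkeeping needed to locate $T_0$ and verify that, after the affine rescaling $A$, the image $A(P \cap T_0)$ genuinely behaves like a quasi-product set with the right quantitative parameters (cardinality $\gtrsim (\delta^{1/2})^{-1+O(\epsilon)}$, the $(\delta^{1/2}, 1, (\delta^{1/2})^{-O(\epsilon)})$-set property, and each point incident to a $(\delta^{1/2}, s, (\delta^{1/2})^{-O(\epsilon)})$-set of the rescaled tubes), all while only losing $\log(1/\delta)$ factors and $O(\epsilon)$ in the exponents at each pigeonholing step. There is a genuine tension here: the hypothesis $N(P,\delta^{1/2}) \le \delta^{-1/2-\epsilon}$ is what makes $P$ "thin at scale $\delta^{1/2}$," hence forces many points into a single $\delta^{1/2}$-ball and ultimately into $T_0$; keeping track of this while simultaneously exploiting the tube-covering bound $N(\calT,\delta^{1/2}) \le \delta^{-s-\epsilon}$ requires a careful iterated dyadic pigeonhole, and one must make sure the $(\delta,s,\delta^{-\epsilon})$-set structure of each $\calT_p$ survives restriction to children of $T_0$. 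The second phase, by contrast, is largely a matter of correctly quoting and interfacing with \cite{Bo}; its difficulty is conceptual (seeing that the quasi-product incidence problem is equivalent to the discretised projection theorem) rather than technical.
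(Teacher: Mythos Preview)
Your overall strategy---counter-assumption, localise to a $\delta^{1/2}$-tube $T_0$, affinely rescale to a quasi-product set, then run Balog--Szemer\'edi--Gowers and Pl\"unnecke--Ruzsa to feed into Bourgain's discretised projection theorem---is exactly the paper's. However, the numerology in your sketch is off in two places, and getting it right is what makes the argument work.

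First, in Phase~1 the point is not simply that $T_0$ contains $\gtrapprox \delta^{-1/2}$ points of $P$. The key output of the localisation is that the $\delta^{1/2}$-balls $B\in\calB$ meeting $T_0$ form a $(\delta^{1/2},1-s)$-set of cardinality $\approx \delta^{(s-1)/2}$ (this comes from the bound $M_{T_0}\lessapprox \delta^{(s-1)/2}$, which in turn requires a separate energy argument at scale $\delta^{1/2}$, see \eqref{form6}--\eqref{form7}). The resulting quasi-product set then has the specific shape $\bigcup_{b\in A_1} A_b\times\{b\}$ with $A_1$ a $(\delta^{1/2},1-s)$-set and each $A_b$ a $(\delta^{1/2},s)$-set; the $s$ versus $1-s$ split is essential, and ``one coordinate spread, one refined'' does not capture it. Moreover, the quasi-product set is not literally $A(P\cap T_0)$: it is built from the $x$-projections of the sets $P_B'$ (requiring Lemma~\ref{projections} to guarantee these projections are large $(\delta,s)$-sets) paired with the $y$-coordinates of the ball-centres.

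Second, your endgame---``$\gtrsim \delta^{-s/2}$ directions under which a $1$-dimensional set projects into $\lesssim\delta^{-s/2}$ intervals''---is not what happens. After the additive-combinatorial reduction in Proposition~\ref{productProp}, Bourgain's Theorem~5 is applied to the $2s$-dimensional product $D^2\times D^2$ (where $D^2\subset A_{b_3}$ is a $(\delta,s)$-set extracted via BSG), and the ``bad'' projection directions $c_b=(b-b_1)/(b_3-b)$ are parametrised by a $(\delta,1-s)$-set $B_0$ of cardinality $\approx \delta^{-(1-s)/2}$, not $\delta^{-s/2}$. The contradiction is that $\calL^1(cD^2+c_bD^2)\lessapprox\delta^{1-s}$ for all $b\in B_0$, whereas Bourgain guarantees a gain $\delta^{1-s-\epsilon_s}$ for at least one such $b$.
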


\begin{remark}\label{calTcalP} Without loss of generality, one may clearly assume that
\begin{displaymath} \calT = \bigcup_{p \in P} \calT_{p}. \end{displaymath}
However, it is good to keep in mind that $p \in T \in \calT$ can, nevertheless, happen for some tubes $T \notin \calT_{p}$. It would make life somewhat easier, if one could assume 
\begin{displaymath} ``T \in \calT_{p} \Longleftrightarrow p \in T", \end{displaymath}
but I do not know how to make such a reduction. \end{remark}

\begin{remark} I suspect that the assumptions of Theorem \ref{main} are unnecessarily strong. The following conjecture seems plausible. Let $0 < s < 1$ and $s \leq \tau < 1$. Assume that $P$ is a $(\delta,1,\delta^{-\epsilon})$-set with $|P| \geq \delta^{-1 + \epsilon}$, and assume that $\calT$ is a family of (dyadic) $\delta$-tubes such that, for every $p \in P$, the sub-family $\{T \in \calT : p \in T\}$ contains a $(\delta,\tau,\delta^{-\epsilon})$-set $\calT_{p}$ of cardinality $|\calT_{p}| \geq \delta^{-s + \epsilon}$. Then $|\calT| \geq \delta^{-2s - \epsilon}$, if $\epsilon > 0$ is small enough in a manner depending only on $s$ and $\tau$. This would give an improvement for the lower Minkowski dimension -- and possibly even Hausdorff dimension -- of Furstenberg sets. \end{remark} 

\subsection{The main counter assumption}\label{mainCounterAss} The proof of Theorem \ref{main} begins, and I make a counter assumption:
\begin{equation}\label{counterAss} \delta^{-2s + 6\epsilon} \lesssim_{\log} |\calT| \leq \delta^{-2s - \epsilon} \quad \text{and} \quad N(\calT,\delta^{1/2}) \leq \delta^{-s - \epsilon}. \end{equation}
(The lower bound for $|\calT|$ is simply a consequence of Lemma \ref{2sBound}.) In the sequel, I will constantly use the notations $A \lessapprox B$, $A \gtrapprox B$ and $A \approx B$ to signify equivalence up to a factor of $C_{\epsilon}\delta^{-C\epsilon}$, where $\epsilon > 0$ is the counter assumption parameter from \eqref{counterAss}. So, for instance $A \lessapprox B$ means that $A \leq C_{\epsilon}\delta^{-C\epsilon}B$ for some constant $C \geq 1$ depending only on $s$, and some constant $C_{\epsilon} \geq 1$ depending only on $\epsilon$ and $s$. It is also convenient to define that a finite collection of points or $\delta$-balls is a $(\delta,t)$-set, if it is a $(\delta,t,C_{\epsilon}\delta^{-C\epsilon})$-set for constants $C,C_{\epsilon} \geq 1$ as above.

In brief, the proof below shows that, under the counter assumption \eqref{counterAss}, certain quantities $A,B$ satisfy $A \lessapprox B$. However, Proposition \ref{productProp} below states that $A \geq \delta^{-\epsilon_{s}}B$ for some $\epsilon_{s} > 0$ depending only on $s$. Consequently, the counter assumption cannot hold for arbitrarily small $\epsilon > 0$, and Theorem \ref{main} follows.

Before starting in earnest, I gather a list of notation which will be introduced more carefully during the proof:
\begin{itemize}
\item[$B,\calB$] The letter $B$ stands for a ball of radius $\delta^{1/2}$, and $\calB$ stands for a family of $\delta^{1/2}$-balls, see the start of Section \ref{scaleDeltaHalf}.
\item[$P$] The letter $P$ stands for the $\delta$-separated set from the hypothesis of Theorem \ref{main}.
\item[$\calT,\calT_{\delta^{1/2}}$] The letter $\calT$ stands for the family of $\delta$-tubes from Theorem \ref{main}, whereas $\calT_{\delta^{1/2}}$ is a cover of the tubes in $\calT$ by $\delta^{1/2}$-tubes; see the definition above \eqref{form3}. 
\item[$'$] Apostrophes generally mean refinements: $P',\calT',\calB'$ are large subsets of $P,\calT,\calB$.
\item[$\calT_{p},\calT^{B}_{\delta^{1/2}}$] Subsets of $\calT$ or $\calT_{\delta^{1/2}}$ containing a fixed point $p$, or intersecting a fixed ball $B$, are denoted by $\calT_{p}$ or $\calT^{B}_{\delta^{1/2}}$; see the line after \eqref{form3}, and the hypothesis of Theorem \ref{main}. 
\item[$P_{B},p_{B},P_{\delta^{1/2}}$] A certain subset of $P \cap B$ is denoted by $P_{B}$. The notation $p_{B}$ stands for a point in $P \cap B$ (not necessarily in $P_{B}$), which has been "singled-out". For both the definitions, see the lines after \eqref{form5}. The union of the points $p_{B}$, over all the balls $B \in \calB$, is denoted by $P_{\delta^{1/2}}$, see \eqref{PDeltaHalf}.
\item[$M_{T},N_{T}$] The letter $M_{T}$ stands for the number of balls $B \in \calB$ intersecting the $\delta^{1/2}$-tubes $T$. Similarly, $N_{T}$ stands for the number of points in $P$ contained in a $\delta$-tube $T$. See the lines after \eqref{form7} and Lemma \ref{monotonicity} for definitions. 
\item[$\calT_{0},\calB_{0}$] The letter $\calT_{0}$ stands for a sub-family of $\calT$, consisting of $\delta$-tubes contained in a fixed $\delta^{1/2}$-tube $T_{0}$. The family $\calB_{0}$ consists of balls in $\calB$ meeting the same tube $T_{0}$. See \eqref{calT0} and below.
\end{itemize}

\subsection{Considerations at scale $\delta^{1/2}$}\label{scaleDeltaHalf} Let $\calB$ be a a collection of $\delta^{1/2}$-balls covering $P$. Then $|\calB| \lessapprox \delta^{-1/2}$ by the assumption \eqref{deltaHalfAssumption}, but clearly also $|\calB| \gtrapprox \delta^{-1/2}$, since $P$ is a $(\delta,1)$-set with $|P| \approx \delta^{-1}$. Moreover, since $|P \cap B| \lessapprox \delta^{-1/2}$ for every $B \in \calB$ by the $(\delta,1)$-set assumption, and $|P| \approx \delta^{-1}$, one sees that a subset $P' \subset P$ with $|P'| \approx \delta^{-1}$ is covered by balls $B \in \calB$ with
\begin{equation}\label{form2} |P \cap B| \approx \delta^{-1/2}. \end{equation}
Since $P'$ satisfies all the same assumptions as $P$ -- with slightly worse constants perhaps -- I may and will assume that $P' = P$; thus, one may assume that \eqref{form2} holds for all balls $B \in \calB$. By throwing away an additional fraction of the points in $P$, one can assume that the balls in $\calB$ are $\delta^{1/2}$-separated:
\begin{equation}\label{distB} \dist(B,B') \geq \delta^{1/2}, \qquad B,B' \in \calB. \end{equation}
(In the most relevant application of Theorem \ref{main}, to the set $P$ from the previous section, both \eqref{form2} and \eqref{distB} are \emph{a priori} guaranteed by \eqref{form210} and the construction of $P$ below \eqref{form210}). Furthermore, \eqref{form2} implies that $\calB$ is a $(\delta^{1/2},1)$-set: if the midpoints of the balls $B \in \calB$ are temporarily denoted by $R$, then
\begin{displaymath} \frac{r}{\delta} \gtrapprox |P \cap B(x,2r)| \gtrapprox |R \cap B(x,r)| \cdot \delta^{-1/2}, \qquad x \in \R^{2}, \: r \geq \delta^{1/2}. \end{displaymath}

Now, let $\calT_{\delta^{1/2}}$ be a collection of $\delta^{1/2}$-parents of the tubes in $\calT$. By the counter assumption \eqref{counterAss},
\begin{equation}\label{form3} |\calT_{\delta^{1/2}}| \lessapprox \delta^{-s}. \end{equation}
For $B \in \calB$, let $\calT_{\delta^{1/2}}^{B}$ be the collection of tubes in $\calT_{\delta^{1/2}}$ intersecting $B$. Pick a large constant $C \geq 1$. I now claim that at most half of the balls $B \in \calB$ can satisfy 
\begin{displaymath} |\calT_{\delta^{1/2}}^{B}| \geq \delta^{-s/2 - C\epsilon}. \end{displaymath}
Indeed, since the collection of balls $\calB$ is a $(\delta^{1/2},1)$-set, Lemma \ref{2sBound} applies at scale $\delta^{1/2}$ (see also Remark \ref{ballsNotPoints}). The conclusion is that if $|\calB|/2 \approx \delta^{-1/2}$ balls in $\calB$ did satisfy $|\calT_{\delta^{1/2}}^{B}| \geq \delta^{-s/2 - C\epsilon}$ the inequality above, then $|\calT_{\delta^{1/2}}| \gtrapprox \delta^{-s - C\epsilon + 6\epsilon}$.  For $C \geq 1$ large enough, this would contradict \eqref{form3}. 

I now discard all the balls from $\calB$ with $|\calT_{\delta^{1/2}}^{B}| \geq \delta^{-s/2 - C\epsilon}$ along with the points of $P$ contained in them. Since $|\calB|/2 \approx \delta^{-1/2}$ balls remain, and each of these balls satisfies \eqref{form2}, also $\approx \delta^{-1}$ points of $P$ remain. Thus, passing to these subsets of $P$ and $\calB$ if necessary, one may assume without loss of generality the uniform bound
\begin{equation}\label{secondRed} |\calT_{\delta^{1/2}}^{B}| \lessapprox \delta^{-s/2}, \qquad B \in \calB. \end{equation}
On the other hand, the tubes in $\calT_{\delta^{1/2}}^{B}$ cover all the tubes $T \in \calT_{p}$, for any individual $p \in B$. Since $\calT_{p}$ is a $(\delta,s)$-set, any fixed dyadic $\delta^{1/2}$-tube can only cover $\lessapprox \delta^{-s/2}$ tubes $T \in \calT_{p}$ by Lemma \ref{auxLemma}. Since $|\calT_{p}| \approx \delta^{-s}$ by assumption, it follows that $|\calT_{\delta^{1/2}}^{B}| \gtrapprox \delta^{-s/2}$. Combining this with \eqref{secondRed}, one obtains
\begin{equation}\label{form4} |\calT_{\delta^{1/2}}^{B}| \approx \delta^{-s/2}, \qquad B \in \calB. \end{equation}
By the argument in the paragraph above \eqref{secondRed}, this implies that
\begin{equation}\label{form3+} |\calT_{\delta^{1/2}}| \approx \delta^{-s}, \end{equation}
matching the upper bound in the counter assumption \eqref{counterAss}. I still need to regularise the situation a little further: even though \eqref{form4} now holds uniformly for $B \in \calB$, it can happen that the tubes $T \in \calT_{\delta^{1/2}}^{B}$ contain significantly different numbers tubes $T \in \calT_{p}$ -- and even worse, these numbers can depend on the choice of $p \in B \cap P$.  

To remedy this, fix $B \in \calB$ and any $p \in B \cap P$ for the moment. Then the tubes $T \in \calT_{p}$, are covered by the tubes in $\calT_{\delta^{1/2}}^{B}$ (since $p \in T \subset T_{0} \in \calT_{\delta^{1/2}}$ forces $T_{0} \cap B \neq \emptyset$). As already observed above, by Lemma \ref{auxLemma}, every tube $T \in \calT_{\delta^{1/2}}^{B}$ can only have $\lessapprox \delta^{-s/2}$ children in $\calT_{p}$. Since $|\calT_{p}| \approx \delta^{-s}$, it follows from this and \eqref{form4} that there necessarily exists a family 
\begin{equation}\label{form22} \calT_{\delta^{1/2}}^{B}(p) \subset \calT_{\delta^{1/2}}^{B} \end{equation}
with $|\calT_{\delta^{1/2}}^{B}| \approx \delta^{-s/2}$ such that every tube in $\calT_{\delta^{1/2}}^{B}(p)$ has $\approx \delta^{-s/2}$ children in $\calT_{p}$. Then the family $\calT_{\delta^{1/2}}^{B}(p)$ is a $(\delta^{1/2},s)$-set. To see this, fix a ball $B(x,r) \subset \R$ with $r \geq \delta^{1/2}$. If $B(x,r)$ contains $s(T_{0})$ for some $T_{0} \in \calT_{\delta^{1/2}}^{B}(p)$, then $B(x,2r)$ contains $s(T)$ for each of the $\approx \delta^{-s/2}$ tubes $T \in \calT_{p}$ with $T \subset T_{0}$. There are $\approx \delta^{-s/2}$ such slopes $s(T)$ by Lemma \ref{tubesAndSlopes}. Since $s(\calT_{p})$ is a $(\delta,s)$-set, $B(x,r)$ contains no more than $\lessapprox (r/\delta)^{s}$ elements in $s(\calT_{p})$, and consequently no more than $\lessapprox (r/\delta^{1/2})^{s}$ elements in $s(\calT_{\delta^{1/2}}^{B}(p))$. 

For every $p \in B \cap P$, the family $\calT_{\delta^{1/2}}^{B}(p) \subset \calT_{\delta^{1/2}}^{B}$ is a subset of cardinality 
\begin{displaymath} |\calT_{\delta^{1/2}}^{B}(p)| \approx |\calT_{\delta^{1/2}}^{B}| \approx \delta^{-s/2} \end{displaymath}
by \eqref{form5}, and so
\begin{displaymath} \sum_{T \in \calT_{\delta^{1/2}}^{B}} |\{p : T \in \calT_{\delta^{1/2}}^{B}(p)\}| = \sum_{p \in B \cap P} |\calT_{\delta^{1/2}}^{B}(p)| \gtrapprox |B \cap P||\calT_{\delta^{1/2}}^{B}|. \end{displaymath} 
Applying Cauchy-Schwarz on the left hand side then gives
\begin{displaymath} \sum_{p,q \in B \cap P} |\calT_{\delta^{1/2}}^{B}(p) \cap \calT_{\delta^{1/2}}^{B}(q)| \gtrapprox |B \cap P|^{2}|\calT_{\delta^{1/2}}^{B}|, \end{displaymath}
which implies that there exist $\approx |B \cap P|^{2} \approx \delta^{-1}$ pairs of points $p,q \in B \cap P$ such that
\begin{equation}\label{form5} |\calT_{\delta^{1/2}}^{B}(p) \cap \calT_{\delta^{1/2}}^{B}(q)| \approx |\calT_{\delta^{1/2}}^{B}| \approx \delta^{-s/2}. \end{equation}
Consequently, one can fix a single point $p = p_{B} \in P \cap B$ such that \eqref{form5} holds for $\approx \delta^{-1/2}$ points $q \in B \cap P$. Denote these points by $P_{B}$, so \eqref{form5} becomes
\begin{equation}\label{form5+} |\calT_{\delta^{1/2}}^{B}(p_{B}) \cap \calT_{\delta^{1/2}}^{B}(q)| \approx \delta^{-s/2}, \qquad q \in P_{B}. \end{equation}
Now, write
\begin{displaymath} \calT_{B}'' := \calT_{\delta^{1/2}}^{B}(p_{B}), \end{displaymath}
which is a $(\delta^{1/2},s)$-set of dyadic $\delta^{1/2}$-tubes meeting $B$ of cardinality $|\calT_{B}''| \approx \delta^{-s/2}$. In fact 
\begin{equation}\label{pBAndT} p_{B} \in T \text{ for all } T \in \calT_{B}'', \end{equation}
since the tubes in $\calT_{B}''$ contain some tubes in $\calT_{p_{B}}$. There are two apostrophes in $\calT_{B}''$, because the collection will, eventually, undergo two refinements (or "removals of exceptional sets"), and the end product will be denoted by $\calT_{B}$.

\subsection{Refining the families of $\delta^{1/2}$-tubes} For a tube $T = \calD([a,\delta^{1/2}) \times [b,\delta^{1/2})) \in \calT_{B}''$, let $e_{T} \in S^{1}$ for the unit vector perpendicular to line $\calD(a,b) \subset T$, and let $E_{B} := \{e_{T} : T \in \calT_{B}\}$. Then $E_{B}$ is a $(\delta^{1/2},s)$-set. Denote by $\pi_{e_{T}}$ the orthogonal projection onto the line spanned by $e_{T}$, see Figure \ref{fig3}.
\begin{figure}
\begin{center}
\includegraphics[scale = 0.4]{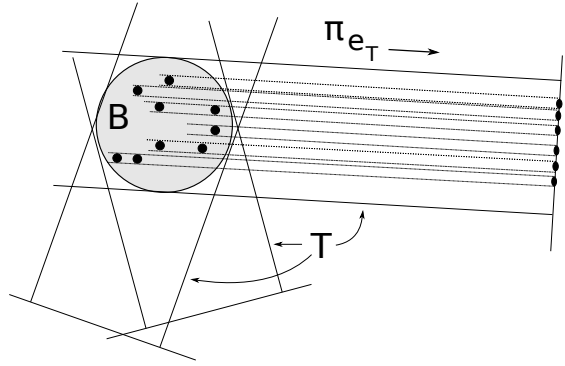}
\caption{The tubes $T \in \calT_{B}''$, and the projections $\pi_{e_{T}}$.}\label{fig3}
\end{center}
\end{figure}
Informally, the next lemma says that "for any $B \in \calB$, in an overwhelming majority of directions $e_{T} \in E_{B}$, the set $\pi_{e_{T}}(P_{B})$, and all its reasonably large subsets, contain a $(\delta,s)$-set of nearly maximal cardinality, namely $\approx \delta^{-s/2}$". 

\begin{lemma}\label{projections} Let $C_{0},C_{1},C_{2} \geq 1$ be constants. Then, if $C_{2}$ is sufficiently large, depending on $C_{0},C_{1}$ and the various constants behind the $\lessapprox$-notation used above, there are at least $(1 - \delta^{C_{1}\epsilon})|\calT_{B}''|$ "good" vectors $e_{T} \in E_{B}$ with the following property: if $P_{B}' \subset P_{B}$ is a subset of cardinality $|P_{B}'| \geq \delta^{C_{0}\epsilon}|P_{B}|$, then $\pi_{e_{T}}(P_{B}')$ contains a $(\delta,s,C_{2})$-set of cardinality $\geq \delta^{C_{2}\epsilon - s/2}$. \end{lemma}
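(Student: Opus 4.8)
The plan is to prove the lemma by a double-counting (dual slicing) argument over the tubes in $\calT_{B}''$, exploiting the fact that $p_{B} \in T$ for all $T \in \calT_{B}''$ and that each such $T$ has $\approx \delta^{-s/2}$ children in $\calT_{p_{B}}$. First I would fix the subset $P_{B}' \subset P_{B}$ with $|P_{B}'| \geq \delta^{C_{0}\epsilon}|P_{B}| \approx \delta^{C_{0}\epsilon - 1/2}$ (the argument has to be uniform over all such $P_{B}'$, but since $P_{B}'$ is determined by a subset of $P_{B}$ and there are at most $2^{|P_{B}|}$ of them, I cannot just union-bound; instead the "good vectors" must be identified \emph{before} $P_{B}'$ is chosen — so the quantifier order is: first throw away bad $e_{T}$'s, then the conclusion holds for \emph{every} large $P_{B}'$). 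The key point is that each $q \in P_{B}$ satisfies \eqref{form5+}: $|\calT_{\delta^{1/2}}^{B}(p_{B}) \cap \calT_{\delta^{1/2}}^{B}(q)| \approx \delta^{-s/2} \approx |\calT_{B}''|$, meaning $q$ lies in almost all tubes of $\calT_{B}''$. Hence for a fixed $T \in \calT_{B}''$, the set of $q \in P_{B}$ with $q \notin T$ has size $\lessapprox \delta^{C'\epsilon}|P_{B}|$ on average, and by pigeonholing over $T$, for all but a $\delta^{C_{1}\epsilon}$-fraction of tubes $T$ we have $|\{q \in P_{B} : q \in T\}| \gtrapprox (1 - \delta^{C'\epsilon})|P_{B}|$. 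Call these the good tubes; so for a good $T$ and any $P_{B}'$ with $|P_{B}'| \geq \delta^{C_{0}\epsilon}|P_{B}|$, still $|P_{B}' \cap T| \gtrapprox \delta^{C_{0}\epsilon}|P_{B}| \approx \delta^{C_{0}\epsilon - 1/2}$.

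Next I would translate "$P_{B}' \cap T$ is large" into "$\pi_{e_{T}}(P_{B}')$ contains a large $(\delta,s)$-set". Here I use the structure of $T$: since $T$ is a dyadic $\delta^{1/2}$-tube containing $p_{B}$ and having $\approx \delta^{-s/2}$ children in $\calT_{p_{B}}$, these children are $\delta$-tubes through $p_{B}$ whose slopes form a $(\delta,s)$-set inside an interval of length $\delta^{1/2}$. Now observe that $T \cap B(0,1)$ is (up to bounded error) an ordinary $\delta^{1/2}$-tube, and $\pi_{e_{T}}$ collapses $T$ to an interval of length $\approx \delta^{1/2}$; a point $q \in P_{B}' \cap T$ is incident to one of the $\delta$-children of $T$ in $\calT_{p_{B}}$ roughly when $q$ and $p_{B}$ determine a line of the corresponding slope, i.e. when $\pi_{e_{T}}(q)$ lands in the correct $\delta$-subinterval. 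The nontrivial input is: the points of $P_{B}' \cap T$, projected by $\pi_{e_{T}}$, cannot concentrate in too few $\delta$-intervals, because $P_{B}'$ inherits the $(\delta,1,\delta^{-\epsilon})$-property from $P$; combined with $|P_{B}' \cap T| \gtrapprox \delta^{C_{0}\epsilon - 1/2}$ and the fact that $T$ has transversal thickness only $\delta^{1/2}$, a slicing/Fubini argument forces $\pi_{e_{T}}(P_{B}' \cap T)$ to meet $\gtrapprox \delta^{C\epsilon - s/2}$ distinct $\delta$-intervals. The $(\delta,s)$-set property of the resulting set of intervals is then extracted by a further pigeonholing, throwing away the (few) $\delta^{1/2}$-subintervals of the projection that carry too many $\delta$-points — this is where the exponent $-s$ in $(\delta,s,C_{2})$ comes from, matching the $(\delta,s)$-structure of the slopes of the children of $T$, and where the dependence of $C_{2}$ on $C_{0},C_{1}$ and the ambient constants is fixed.

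The main obstacle I anticipate is precisely the uniformity over all large subsets $P_{B}' \subset P_{B}$: the "good direction" set $E_{T}$ must be selected using only information about $P_{B}$ and $\calT_{B}''$ (via \eqref{form5+}), so the reduction of $|P_{B}' \cap T|$ being large must be robust — it relies solely on $T$ being a good tube (containing a $(1-\delta^{C'\epsilon})$-fraction of $P_{B}$), which is a property of $T$ alone. Given that, passing from $|P_{B}' \cap T| \gtrapprox \delta^{C_{0}\epsilon}|P_{B}|$ to a large $(\delta,s)$-set in $\pi_{e_{T}}(P_{B}')$ is the quantitative heart of the matter, and the exponent bookkeeping (ensuring the final cardinality is $\geq \delta^{C_{2}\epsilon - s/2}$ with $C_{2}$ chosen \emph{after} $C_{0},C_{1}$) needs to be done carefully but is routine. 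A secondary technical point is handling the dyadic-versus-ordinary tube discrepancy and the affine distortion of $\pi_{e_{T}}$ restricted to $T$, both of which only cost bounded (hence $\approx 1$) factors and can be absorbed into the $\lessapprox$-notation.
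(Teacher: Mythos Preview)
Your proposal has a genuine gap in both steps, and the overall approach will not work.

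\textbf{Step 1 fails because you conflate ``$\approx$'' with ``$(1-o(1))$''.} From \eqref{form5+} you only know $|\calT_{B}'' \cap \calT_{\delta^{1/2}}^{B}(q)| \geq \delta^{C\epsilon}|\calT_{B}''|$ for some constant $C$; this is what ``$\approx$'' means here. It does \emph{not} say that $q$ lies in a $(1-\delta^{C'\epsilon})$-fraction of the tubes in $\calT_{B}''$. Double counting therefore only yields: a $\delta^{C'\epsilon}$-fraction of tubes $T \in \calT_{B}''$ contain a $\delta^{C\epsilon}$-fraction of $P_{B}$. But then a subset $P_{B}'$ of size $\delta^{C_{0}\epsilon}|P_{B}|$ may be chosen \emph{disjoint} from $T \cap P_{B}$, so the conclusion $|P_{B}' \cap T| \gtrapprox \delta^{C_{0}\epsilon}|P_{B}|$ is simply false in general. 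The uniformity over $P_{B}'$ cannot be obtained this way.

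\textbf{Step 2 fails because the $(\delta,1)$-set property does not control projections.} Even granting that $P_{B}'$ has many points in $T$ (or in $B$), a Fubini/slicing argument gives nothing: a $\delta$-strip perpendicular to $e_{T}$, intersected with the $\delta^{1/2}$-ball $B$, is a $\delta \times \delta^{1/2}$ rectangle that can contain up to $\approx \delta^{-1/2}$ points of a $(\delta,1)$-set. So the trivial lower bound on the number of $\delta$-intervals in $\pi_{e_{T}}(P_{B}')$ is $\approx 1$, not $\approx \delta^{-s/2}$. Concretely, nothing prevents $P_{B}'$ from lying on a line segment parallel to $T$, in which case $\pi_{e_{T}}(P_{B}')$ is a single point. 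The ``$(\delta,s)$-structure of the slopes of the children of $T$'' that you invoke is irrelevant: the lemma is a statement about projections of $P_{B}'$, and the children of $T$ in $\calT_{p_{B}}$ play no role.

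The paper's proof instead runs a Kaufman-style potential-theoretic argument. One puts a Frostman-type measure $\mu_{B}$ on (a rescaled copy of) $P_{B}$ and a measure $\nu$ on the direction set $E_{B}$, and computes $\int I_{s}(\pi_{e\sharp}\mu_{B})\, d\nu(e) \lessapprox 1$. Chebyshev then gives a set of good directions $e$ (of $\nu$-measure $\geq 1-\delta^{C_{1}\epsilon}$) where $I_{s}(\pi_{e\sharp}\mu_{B}) \leq \delta^{-C_{2}'\epsilon}$. The crucial point, which resolves the uniformity issue you identified as the main obstacle, is that \emph{bounded $s$-energy is stable under restriction to large subsets}: if $|P_{B}'| \geq \delta^{C_{0}\epsilon}|P_{B}|$, the restricted renormalised measure $\mu'$ satisfies $I_{s}(\pi_{e\sharp}\mu') \leq \delta^{-2C_{0}\epsilon} I_{s}(\pi_{e\sharp}\mu_{B})$, and bounded $s$-energy implies $\calH^{s}_{\infty}(\pi_{e}(\spt\mu')) \gtrapprox 1$, whence Proposition~\ref{deltasSet} produces the required $(\delta,s)$-set. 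The good directions are thus selected once and for all, independently of $P_{B}'$, exactly because the energy bound transfers to every large subset.
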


\begin{proof} The proof is a variation of the standard "potential theoretic" argument, invented by Kaufman \cite{Ka}; if the reader is not familiar with the technique, a similar but cleaner statement is Theorem 2 in \cite{FM}. First, observe that $\delta^{-1/2}P_{B}$ is a $(\delta^{1/2},1)$-set of cardinality $\approx \delta^{-1/2}$. Next, consider the measures
\begin{displaymath} \mu_{B} := \frac{1}{|P_{B}|} \sum_{p \in \delta^{-1/2}P_{B}} \frac{\chi_{B(p,\delta^{1/2})}}{\delta} \end{displaymath}
and
\begin{displaymath} \nu := \frac{1}{|E_{B}|} \sum_{e \in E_{B}} \frac{\chi_{B(e,\delta^{1/2}) \cap S^{1}}}{\delta^{1/2}}, \end{displaymath}
and note that $\mu_{B}(\R^{2}) \sim 1 \sim \nu(S^{1})$. For $r \geq \delta^{1/2}$, one has the uniform estimates $\mu_{B}(B(x,r)) \lessapprox r$ and $\nu(B(e,r)) \lessapprox r^{s}$, while for $0 < r \leq \delta^{1/2}$ one has the obvious improved estimates. After some straightforward computations, it follows that
\begin{equation}\label{form29} \int_{S^{1}} I_{s}(\pi_{e\sharp}\mu_{B}) \, d\nu e := \iint \left[ \int_{S^{1}} \frac{d\nu e}{|\pi_{e}(x) - \pi_{e}(y)|^{s}} \right] \, d\mu_{B} x \, d\mu_{B} y \lessapprox 1. \end{equation}
Indeed, the inner integral (in brackets) can be estimated by $\lessapprox 1/|x - y|^{s}$, and then
\begin{displaymath} \int_{S^{1}} I_{s}(\pi_{e\sharp}\mu_{B}) \, d\nu e \lessapprox \int \left[\int \frac{d\mu_{B} x}{|x - y|^{s}} \right] \, d\mu_{B} y \lessapprox 1, \end{displaymath}
since the inner integral is again bounded by $\lessapprox 1$ for any $y \in \R^{2}$. Consequently, by Chebyshev's inequality, 
\begin{displaymath} \nu(\{e \in S^{1} : I_{s}(\pi_{e\sharp}\mu_{B}) \geq \delta^{-C\epsilon}\}) \lessapprox \delta^{C\epsilon}, \quad C \geq 1. \end{displaymath}
Now, first, choose $C_{2}' \geq 1$ so large that $\nu(\{e : I_{s}(\pi_{e\sharp}\mu_{B}\}) \geq \delta^{-C_{2}'\epsilon}\}) \leq \delta^{C_{1}\epsilon}\nu(S^{1})$, and let $E' := \{e : I_{s}(\pi_{e\sharp}\mu_{B}) \leq \delta^{-C_{2}'\epsilon}\}$. One evidently needs $\geq (1 - \delta^{C_{1}\epsilon})|E_{B}|$ arcs of the form $B(e,\delta^{1/2}) \cap S^{1}$, $e \in E_{B}$ to cover $E'$, and this gives rise to a subset $E \subset E_{B}$ with $|E| \geq (1 - \delta^{C_{1}\epsilon})|E_{B}|$. I claim that these are of desired "good" vectors. 

For every $e \in E$, by definition, there exists a vector $e' \in B(e,\delta^{1/2}) \cap S^{1}$ with 
\begin{equation}\label{form20} \iint \frac{d\mu x \, d\mu y}{|\pi_{e'}(x) - \pi_{e'}(y)|^{s}} = I_{s}(\pi_{e'\sharp}\mu) \lessapprox \delta^{-C_{2}'\epsilon}. \end{equation}
Now, if $P_{B}' \subset P$ is a subset of cardinality $|P_{B}'| \geq \delta^{C_{0}\epsilon}|P_{B}|$ (as in the statement of the lemma), then the probability measure $\mu'$, defined in the obvious way by restricting and re-normalising $\mu_{B}$ to the subset $\delta^{-1/2}P_{B}'$, still satisfies \eqref{form20} with the $\lessapprox$-parameters depending on $C_{0}$. It follows that $\calH^{s}_{\infty}(\pi_{e'}(\spt \mu')) \gtrapprox 1$ (with similar dependence on $C_{0}$), and hence $\pi_{e'}(\spt \mu')$ contains a $(\delta^{1/2},s,C_{2})$-set of cardinality $\geq \delta^{C_{2}\epsilon -s/2}$ by Proposition \ref{deltasSet}, if $C_{2} \geq 1$ is large enough (depending on $C_{0}$ and $C_{2}'$, which just depends on $C_{1}$). Since $\pi_{e'}(\spt \mu')$ is contained in the $\delta^{1/2}$-neighbourhood of $\pi_{e'}(\delta^{-1/2}P_{B}')$, the same conclusion holds for $\pi_{e'}(\delta^{-1/2}P_{B}')$. Finally, using $|e' - e| \leq \delta^{1/2}$, the conclusion remains valid for $\pi_{e}(\delta^{-1/2}P_{B}')$, and thus, rescaling by $\delta^{1/2}$, the projection $\pi_{e}(P_{B}')$ contains a $(\delta,s)$-set of cardinality $\geq \delta^{C_{2}\epsilon-s/2}$ for every $e \in E$. \end{proof}

Fix some constants $C_{0},C_{1} \geq 1$ and let $C_{2}$ be specified by the lemma (the constant $C_{0}$ will be fixed far below, wheres $C_{1}$ will be specified momentarily). I define $\calT_{B}' \subset \calT_{B}''$ by selecting the $\geq (1 - \delta^{C_{1}\epsilon})|\calT_{B}''|$ tubes indicated by the lemma. Then, if $C_{1} \geq 1$ is large enough, \eqref{form5+} continues to hold for every $q \in P_{B}$, and with $\calT_{B}''$ replaced by $\calT_{B}'$:
\begin{equation}\label{form5++} |\calT_{B}' \cap \calT_{\delta^{1/2}}^{B}(p)| \geq |\calT_{B}'' \cap \calT_{\delta^{1/2}}(p)| - \delta^{C_{1}\epsilon}|\calT_{B}''| \approx \delta^{-s/2}, \quad p \in P_{B}. \end{equation}
Note that the parameters in the "$\approx$"-notation here do not depend on $C_{1}$, assuming that $C_{1}$ is large enough. This completes the first refinement of $\calT_{B}''$: roughly speaking, the conclusion was that "without loss of generality", the sets $P_{B}$, $B \in \calB$, can be assumed to have large projections in every direction perpendicular to the tubes meeting $B$. The second refinement (from $\calT_{B}'$ to $\calT_{B}$) is concerned with the distribution of the balls $B \in \calB$ meeting a fixed $\delta^{1/2}$-tube $T \in \calT_{\delta^{1/2}}$. Roughly speaking, I claim that "without loss of generality", every tube $T \in \calT_{\delta^{1/2}}$ only meets a $(1 - s)$-dimensional family of balls $B \in \calB$.  

To formalise such thoughts, write
\begin{equation}\label{PDeltaHalf} P_{\delta^{1/2}} := \{p_{B} : B \in \calB\}. \end{equation}
Since the balls $B \in \calB$ were assumed $\delta^{1/2}$-separated (see \eqref{distB}), it follows that $P_{\delta^{1/2}}$ is a $(\delta^{1/2},1)$-set of cardinality $|P_{\delta^{1/2}}| = |\calB| \approx \delta^{-1/2}$.  Consider the following inequality (see explanations below):
\begin{equation}\label{form6} \sum_{T \in \calT_{\delta^{1/2}}} \mathop{\sum_{B,B' \in \calB}}_{B \neq B'}  \frac{\chi_{\calT_{B}' \cap \calT_{B'}'}(T)}{|p_{B} - p_{B'}|^{1 - s}} = \mathop{\sum_{B,B' \in \calB}}_{B \neq B'} \frac{|\calT_{B}' \cap \calT_{B'}'|}{|p_{B} - p_{B'}|^{1 - s}} \lessapprox \mathop{\sum_{B,B' \in \calB}}_{B \neq B'} \frac{1}{|p_{B} - p_{B'}|} \lessapprox \delta^{-1}. \end{equation} 
The first "$\lessapprox$"-inequality uses the fact that $\calT_{B}'$ (or $\calT_{B'}'$) is a $(\delta^{1/2},s)$-set of tubes: since $T \in \calT_{B}' \cap \calT_{B'}'$ implies that $p_{B},p_{B'} \in T$ (recall \eqref{pBAndT}), this can only hold for $\lessapprox 1/|p_{B} - p_{B'}|^{s}$ choices of $T \in \calT_{B}'$ (or $T \in \calT_{B'}'$). The second "$\lessapprox$"-inequality in \eqref{form6} follows simply from the fact that $P_{\delta^{1/2}}$ is a $(\delta^{1/2},1)$-set.

Fix a large constant $C_{3} \geq 1$. It follows from \eqref{form6} and Chebyshev's inequality that
\begin{equation}\label{form21} \mathop{\sum_{B,B' \in \calB}}_{B \neq B'}  \frac{\chi_{\calT_{B}' \cap \calT_{B'}'}(T)}{|p_{B} - p_{B'}|^{1 - s}} \geq \delta^{-C_{3}\epsilon + s - 1} \end{equation}
can only hold for $\lessapprox \delta^{C_{3}\epsilon - s}$ tubes $T \in \calT_{\delta^{1/2}}$. Recalling that $|\calT_{\delta^{1/2}}| \approx \delta^{-s}$ by \eqref{form3+}, this roughly says that the tubes satisfying \eqref{form21} are exceptional. I need a the following slightly more accurate statement: for half of the balls $B \in \calB$, only a tiny fraction of the tubes in $T \in \calT_{B}$ can satisfy \eqref{form21}, if $C_{3} \geq 1$ was chosen large enough. Indeed, recall the constant $C_{1}$ from the previous page and assume that, for a certain $C_{3} \geq 1$, it holds that $\geq \delta^{C_{1}\epsilon}|\calT_{B}'| \approx \delta^{C_{1}\epsilon - s/2}$ tubes in $\calT_{B}'$ satisfy \eqref{form21} for $B \in \calB' \subset \calB$, where $|\calB'| \geq |\calB|/2$. Then Lemma \ref{2sBound} applies at scale $\delta^{1/2}$, and with the $(\delta^{1/2},1)$-set $\calB'$, and implies that the total number of tubes in $\calT_{\delta^{1/2}}$ satisfying \eqref{form21} is $\gtrapprox \delta^{-s}$, where the implicit parameters depend on $C_{1}$, but clearly not on $C_{3}$. Comparing this with the upper bound $\lessapprox \delta^{C_{3}\epsilon - s}$ gives an upper bound for $C_{3}$, which depends on $C_{1}$. 

Hence, assuming that $C_{3} \geq 1$ is large enough, the converse of \eqref{form21} holds for all $B \in \calB'$, and for $\geq (1 - \delta^{C_{1}\epsilon})|\calT_{B}'|$ tubes in $\calT_{B}'$. These tubes will be denoted by $\calT_{B}$. And once more, if $C_{1} \geq 1$ is large enough, the analogue of \eqref{form5++} continues to hold for $\calT_{B}$:
\begin{equation}\label{form5+++} |\calT_{B} \cap \calT_{\delta^{1/2}}^{B}(p)| \approx \delta^{-s/2}, \quad p \in P_{B}, \: B \in \calB'. \end{equation}
Again, the parameters in the "$\approx$"-notation do not, in fact, depend on $C_{1}$, assuming that $C_{1}$ is large enough. For each tube $T \in \calT_{B}$, I further observe that the number $M_{T} := |\{B \in \calB' : T \in \calT_{B}\}|$ satisfies
\begin{equation}\label{form7} M_{T} \lessapprox \delta^{(s - 1)/2}, \end{equation}
where the implicit constants depend on $C_{3}$. Indeed, the failure of \eqref{form7} (say: $M_{T} \geq \delta^{-(2C_{3}\epsilon + s - 1)/2}$) would imply that there are far more than $\delta^{s - 1}$ pairs $B,B' \in \calB' \subset \calB$ such that $T \in \calT_{B} \cap \calT_{B'} \subset \calT_{B}' \cap \calT_{B'}'$, which would violate \eqref{form21} (since $|p_{B} - p_{B'}| \lesssim 1$ for all pairs $B,B' \in \calB$). 

\subsection{Considerations at scale $\delta$} From now on, only the points in $P_{B}$, $B \in \calB'$, play any role in the proof. Set
\begin{displaymath} P' := \bigcup_{B \in \calB'} P_{B}, \end{displaymath}
which is a $(\delta,1)$-set of cardinality $\approx \delta^{-1}$, since $|\calB'| \approx \delta^{-1/2}$ and $|P_{B}| \approx \delta^{-1/2}$ for $B \in \calB$ by the definition of $P_{B}$ (just above \eqref{form5}). Fix $p \in P'$ and $B \in \calB'$ such that $p \in P_{B}$. For every 
\begin{displaymath} T \in \calT_{B} \cap \calT^{B}_{\delta^{1/2}}(p), \end{displaymath}
define $\calT_{p}'$ to consist of all the $\delta$-tubes $\calT_{p}$, which are $\delta$-children of $T$. By definition of $\calT_{\delta^{1/2}}^{B}(p)$ (see \eqref{form22}) and \eqref{form5+++}), the resulting subset $\calT_{p}' \subset \calT_{p}$ is a $(\delta,s)$-set of tubes containing $p$, and with 
\begin{equation}\label{calTq} |\calT_{p}'| \approx \delta^{-s}. \end{equation}
Thus, the set $P' \subset P$ and the families $\calT_{p}' \subset \calT_{p}$, $p \in P'$, satisfy precisely the same hypotheses as the original families $P$ and $\calT_{p}$ in Theorem \ref{main}. So, for notational, convenience, I re-define $P := P'$, $\calT_{p} := \calT_{p}'$, and $\calB := \calB'$. As before (in Remark \ref{calTcalP}), I continue to assume, without loss of generality, that
\begin{displaymath} \calT := \bigcup_{p \in P} \calT_{p}. \end{displaymath}
I also re-define $\calT_{\delta^{1/2}}$ to be the union of the families $\calT_{B}$, $B \in \calB$. Note that, with this definition of $\calT_{\delta^{1/2}}$, one has \eqref{form7} for all tubes in $\calT_{\delta^{1/2}}$. 

Compared with the original families $\calT_{p}$, the new families $\calT_{p}$ now enjoy additional regularity properties, which will be useful during the remainder of the proof. To exploit these, I record the following observation:
\begin{lemma}\label{monotonicity} Assume that $p \in B \in \calB$ and $T \in \calT_{p}$. Then the $\delta^{1/2}$-parent of $T$ belongs to $\calT_{B}$. In particular, the $\delta^{1/2}$-parents of the tubes in $\calT$ belong to $\calT_{\delta^{1/2}}$.
\end{lemma}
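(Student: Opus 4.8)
The plan is to unwind the construction of the re-defined families $\calT_{p}$ carried out in the paragraph containing \eqref{calTq}, using only the disjointness of the balls of $\calB$ and the uniqueness of $\delta^{1/2}$-parents. Note first that for $\calT_{p}$ to be defined one needs $p \in P$, so the point $p$ in the statement automatically lies in $P = \bigcup_{B \in \calB} P_{B}$.

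The first step is an auxiliary observation: every $p \in P$ lies in exactly one ball of $\calB$, and in $P_{B}$ for that ball. Indeed, the balls of $\calB$ are $\delta^{1/2}$-separated by \eqref{distB}, hence pairwise disjoint; since moreover $P_{B} \subset B$ for each $B \in \calB$, a point $p \in P$ lying in some $B \in \calB$ must lie in $P_{B}$, and this $B$ is unique. Consequently, if $p \in B \in \calB$ is as in the statement, then this $B$ is precisely the ball used to define $\calT_{p}$.

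Now fix such $p$ and $B$, and let $T \in \calT_{p}$. By construction, the re-defined family $\calT_{p}$ consists of $\delta$-tubes each of which is a $\delta$-child of some tube in $\calT_{B} \cap \calT_{\delta^{1/2}}^{B}(p)$; in particular there is a dyadic $\delta^{1/2}$-tube $T_{0}$ with $T \subset T_{0}$ and $T_{0} \in \calT_{B}$. By Definition \ref{dyadicTubes}, $T$ has a unique $\delta^{1/2}$-parent, namely the unique dyadic $\delta^{1/2}$-tube containing it; since $T_{0}$ is such a tube, $T_{0}$ is the $\delta^{1/2}$-parent of $T$. Hence the $\delta^{1/2}$-parent of $T$ belongs to $\calT_{B}$, which is the first assertion.

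The "in particular" is then immediate: a tube $T \in \calT = \bigcup_{p \in P} \calT_{p}$ lies in $\calT_{p}$ for some $p \in P$; letting $B \in \calB$ be the unique ball containing $p$, the first part gives that the $\delta^{1/2}$-parent of $T$ lies in $\calT_{B} \subset \bigcup_{B' \in \calB} \calT_{B'} = \calT_{\delta^{1/2}}$, by the re-definition of $\calT_{\delta^{1/2}}$. There is no substantial obstacle here; the only point deserving care is the uniqueness of the ball containing a given $p$, which is exactly what makes the definition of $\calT_{p}$, and hence the statement of the lemma, well posed.
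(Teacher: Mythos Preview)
Your proof is correct and follows essentially the same approach as the paper's: the paper simply says the claim ``follows immediately from the construction of $\calT_{p}'$,'' and you have carefully unwound that construction, making explicit the uniqueness of the ball $B$ containing $p$ (via \eqref{distB}) and the uniqueness of dyadic parents. There is nothing to add.
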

\begin{proof} This follows immediately from the construction of $\calT_{p}'$ -- which is now called $\calT_{p}$. \end{proof}

For $T \in \calT$, write
\begin{displaymath} N_{T} := |\{p \in P : T \in \calT_{p}\}|, \end{displaymath}
which is the analogue of the number $M_{T}$ at scale $\delta$. I make the following (rather familiar) claim: for at least half of the points $p \in P$, only a tiny fraction of the tubes in $\calT_{p}$ can \textbf{fail} to satisfy $N_{T} \lessapprox \delta^{s - 1}$. The proof is virtually the same as for the numbers $M_{T}$. One starts with the inequality
\begin{displaymath} \sum_{T \in \calT} \mathop{\sum_{p,q \in P}}_{p \neq q} \frac{\chi_{\calT_{p} \cap \calT_{q}}(T)}{|p - q|^{1 - s}} \lessapprox \delta^{-2}, \end{displaymath} 
which is analogous to and proven in the same way as \eqref{form6}. Thus, only $\lessapprox \delta^{C_{4}\epsilon - 2s}$ tubes in $\calT_{p}$ can satisfy 
\begin{displaymath} \mathop{\sum_{p,q \in P}}_{p \neq q} \frac{\chi_{\calT_{p} \cap \calT_{q}}(T)}{|p - q|^{1 - s}} \geq \delta^{-C_{4}\epsilon + 2s - 2}. \end{displaymath}
Hence, using Lemma \ref{2sBound} as before, the inequality above can only hold for a tiny fraction (depending on $C_{4}$) of the tubes in $\calT_{p}$, for half of the points in $P$. This implies the statement about the numbers $N_{T}$. Now, as final refinement, I only keep the "good" half of the points in $P$, and for those $p \in P$, I re-define $\calT_{p}$ to consist of the tubes $T$ with 
\begin{equation}\label{form1} N_{T} \lessapprox \delta^{s - 1}. \end{equation}
If $C_{4}$ was large enough, the cardinality estimate \eqref{calTq} stays valid. Finally, if $\calT$ is re-defined as the union of the (remaining) tubes in $\calT_{p}$, $p \in P$, one may assume that \eqref{form1} holds uniformly for all $T \in \calT$. 

Fix $p \in P$ and $T \in \calT$. Recall that the pair $(p,T)$ is called an \emph{incidence}, if $T \in \calT_{p}$, and the collection of all incidences is denoted by $I(P,\calT) := \{(p,T) : T \in \calT_{p}\}$. Evidently
\begin{equation}\label{incidences2} |I(P,\calT)| = \sum_{p \in P} |\calT_{p}| \approx \delta^{-s - 1}. \end{equation}
By the uniform upper bound \eqref{form1}, any tube $T \in \calT$ can only be incident to $\lessapprox \delta^{s - 1}$ points in $P$. Since $|\calT| \lessapprox \delta^{-2s}$ by the main counter assumption \eqref{counterAss}, the estimate \eqref{incidences2} shows that there exist $\approx \delta^{-2s}$ tubes in $\calT$ with $N_{T} \approx \delta^{s - 1}$. These tubes will be called \emph{good} tubes, and they will be denoted by $\calT^{G}$. 
\begin{lemma}\label{goodTubes} Any fixed tube $T_{0} \in \calT_{\delta^{1/2}}$ can only have $\lessapprox \delta^{-s}$ children in $\calT^{G}$. 
\end{lemma}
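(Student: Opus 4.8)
The plan is to prove the bound by a double count of incidences $(p,T)$ with $p \in P$ and $T$ a \emph{good child} of $T_0$, meaning $T \in \calT^{G}$, $T \subset T_0$ and $T \in \calT_p$. Fix $T_0 \in \calT_{\delta^{1/2}}$ and write $\calT_0^{G} := \{T \in \calT^{G} : T \subset T_0\}$ for the family whose size we wish to bound. Since $\calT^{G} \subset \calT = \bigcup_{p \in P}\calT_p$, every $T \in \calT_0^{G}$ lies in $\calT_p$ for exactly $N_T$ points $p \in P$, and $N_T \gtrapprox \delta^{s-1}$ because $T \in \calT^{G}$; hence
\[ |\calT_0^{G}| \cdot \delta^{s-1} \lessapprox \sum_{T \in \calT_0^{G}} N_T = |\{(p,T) \in P \times \calT_0^{G} : T \in \calT_p\}|, \]
so it suffices to show the right-hand side is $\lessapprox \delta^{-1}$.

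I would estimate this number of incidences by controlling its two ``marginals''. First, for a fixed $p$, any $T \in \calT_p$ with $T \subset T_0$ is a $\delta$-child of the $\delta^{1/2}$-tube $T_0$; since $\calT_p$ is a $(\delta,s)$-set, the sub-collection of its members contained in $T_0$ is still a $(\delta,s)$-set, so Lemma \ref{auxLemma} (with $\delta_1 = \delta$, $\delta_2 = \delta^{1/2}$) gives $|\{T \in \calT_p : T \subset T_0\}| \lessapprox (\delta^{1/2}/\delta)^{s} = \delta^{-s/2}$. Second — and this is the decisive localisation — I claim that only $\lessapprox \delta^{s/2 - 1}$ points of $P$ can be \emph{relevant}, in the sense that $\calT_p$ contains some child of $T_0$. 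Indeed, if $p \in P \cap B$ with $B \in \calB$ and $T \in \calT_p$ satisfies $T \subset T_0$, then $T_0$ is the $\delta^{1/2}$-parent of $T$, so Lemma \ref{monotonicity} forces $T_0 \in \calT_B$; hence $B$ lies in $\calB_0 := \{B \in \calB : T_0 \in \calT_B\}$. By \eqref{form7} we have $|\calB_0| = M_{T_0} \lessapprox \delta^{(s-1)/2}$, and since the balls in $\calB$ are $\delta^{1/2}$-separated with $|P \cap B| \approx \delta^{-1/2}$ for each $B \in \calB$ (recall \eqref{form2} and the redefinition $P = \bigcup_{B}P_B$), the relevant points number at most $\sum_{B \in \calB_0}|P \cap B| \lessapprox \delta^{(s-1)/2}\cdot\delta^{-1/2} = \delta^{s/2-1}$.

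Combining the two marginal bounds,
\[ |\{(p,T) \in P \times \calT_0^{G} : T \in \calT_p\}| \leq \sum_{p \textrm{ relevant}} |\{T \in \calT_p : T \subset T_0\}| \lessapprox \delta^{s/2-1}\cdot\delta^{-s/2} = \delta^{-1}, \]
and feeding this back into the first display yields $|\calT_0^{G}| \lessapprox \delta^{-1}\cdot\delta^{1-s} = \delta^{-s}$, which is the claim. The only step with genuine content is the bound on the number of relevant points: an arbitrary $\delta^{1/2}$-tube can meet as many as $\approx \delta^{-1/2}$ of the balls in $\calB$, and if that were all one knew, the argument would only give the useless bound $|\calT_0^{G}| \lessapprox \delta^{-3s/2}$. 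It is precisely the second refinement of the families $\calT_B$ — encapsulated in \eqref{form7} together with Lemma \ref{monotonicity} — that caps the number of relevant balls at $\lessapprox \delta^{(s-1)/2}$ and makes the estimate work; everything else is routine double counting and bookkeeping with the $\approx$/$\lessapprox$-notation, much as in Remark \ref{calTcalP}.
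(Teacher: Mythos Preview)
Your proof is correct and follows essentially the same approach as the paper's: a double count of incidences $(p,T)$ with $T\in\calT^{G}$, $T\subset T_{0}$, $T\in\calT_{p}$, with the lower bound coming from $N_{T}\approx\delta^{s-1}$ and the upper bound from Lemma~\ref{monotonicity} combined with \eqref{form7} and Lemma~\ref{auxLemma}. The paper organises the upper bound as a sum over balls $B$ with $T_{0}\in\calT_{B}$ (your ``relevant'' balls) and then over points in $P_{B}$, which is exactly your two ``marginals'' written in a slightly different order.
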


\begin{proof} Write $\calT^{G}(T_{0}) := \{T \in \calT^{G} : T \subset T_{0}\}$, and let $I^{G}(T_{0})$ be the set of incidences
\begin{displaymath} I^{G}(T_{0}) := \{(p,T) \in P \times \calT : T \in \calT_{p} \cap \calT^{G}(T_{0})\}. \end{displaymath}
By the definition of good tubes, evidently
\begin{displaymath} |I^{G}(T_{0})| \gtrapprox |\calT^{G}(T_{0})|\delta^{s - 1}. \end{displaymath}
On the other hand, by Lemma \ref{monotonicity}, an incidence $(p,T) \in P \times \calT_{p}$ can only occur, if the $\delta^{1/2}$-parent of $T$ belongs to $\calT_{B_{p}}$ for the (unique) ball $B_{p} \in \calB$ containing $p$. For $T \subset T_{0}$, the $\delta^{1/2}$-parent is evidently $T_{0}$, so 
\begin{displaymath} T \subset T_{0} \text{ and } (p,T) \in \calT_{p} \quad \Longrightarrow \quad T_{0} \in \calT_{B_{p}}. \end{displaymath}
Now, recall from the estimate $M_{T_{0}} \lessapprox \delta^{(s - 1)/2}$ (see \eqref{form7}), that there are only $\lessapprox \delta^{(s - 1)/2}$ balls $B$ with $T_{0} \in \calT_{B}$. For every such a ball $B$, every point $p \in P_{B}$ can be incident to $\lessapprox \delta^{-s/2}$ tubes $T \in \calT_{p}$ with $T \subset T_{0}$ (for the simple reason that $T_{0}$ only contains $\lessapprox \delta^{-s/2}$ tubes in $\calT_{p}$ by Lemma \ref{auxLemma}). Recalling that $|P_{B}| \lessapprox \delta^{-1/2}$, this gives the upper bound
\begin{equation}\label{form23} |I^{G}(T_{0})| \leq \sum_{B : T_{0} \in \calT_{B}} |\{(p,T) \in B \times \calT : T \in \calT_{p} \cap \calT^{G}(T_{0})\}| \lessapprox \delta^{(s - 1)/2} \cdot \delta^{-1/2} \cdot \delta^{-s/2} = \delta^{-1}. \end{equation} 
Comparing with the lower bound for $|I^{G}(T_{0})|$ completes the proof. \end{proof}
To sum up the most recent observations, there are $\approx \delta^{-2s}$ good tubes, each one of which is contained in some tube of $\calT_{\delta^{1/2}}$, and each tube in $\calT_{\delta^{1/2}}$ can only contain $\lessapprox \delta^{-s}$ good tubes. By the main counter assumption \eqref{counterAss}, moreover, one has $|\calT_{\delta^{1/2}}| \lessapprox \delta^{-s}$, which finally implies that there exists a tube $T_{0} \in \calT$ with $|\calT^{G}(T_{0})| \approx \delta^{-s}$. For simplicity, write 
\begin{equation}\label{calT0} \calT_{0} := \calT^{G}(T_{0}). \end{equation} 

I now claim that there are $\approx \delta^{(s - 1)/2}$ balls $B \in \calB$, say $\calB_{0}$, such that $T_{0} \in \calT_{B}$ for $B \in \calB_{0}$, and such that in each ball $B \in \calB_{0}$ one finds $\approx \delta^{-1/2}$ points of $P_{B}$, say $P_{B}'$, with a near-maximal number of incidences with $\calT_{0}$, namely
\begin{equation}\label{form8} |\calT_{p} \cap \calT_{0}| \approx \delta^{-s/2}, \qquad p \in P_{B}', \: B \in \calB_{0}. \end{equation}
This follows directly from the proof of Lemma \ref{goodTubes}. First observe that $|I^{G}(T_{0})| \approx \delta^{-1}$, since $|\calT_{0}| \approx \delta^{-s}$. Next, have a look at the upper bound \eqref{form23}, and observe that if any part of the claim failed, the bound would be lower than $\delta^{-1}$. This establishes the claim. 

Now, since $T_{0} \in \calT_{\delta^{1/2}}$, the converse of \eqref{form21} holds for $T_{0}$ (recall the definition of $\calT_{B}$ next to \eqref{form5+++}, and recall that every tube in $\calT_{\delta^{1/2}}$ belongs to $\calT_{B}$ for some $B$):
\begin{displaymath} \mathop{\sum_{B,B \in \calB_{0}}}_{B \neq B'} \frac{1}{|p_{B} - p_{B'}|^{1 - s}} \lessapprox \delta^{s - 1}. \end{displaymath}
Using Chebyshev's inequality, this implies that a further subset $\calB_{0}' \subset \calB_{0}$ of cardinality $|\calB_{0}'| \approx \delta^{(s - 1)/2}$ satisfies
\begin{displaymath} \sum_{B' \in \calB_{0}} \frac{1}{|p_{B} - p_{B}'|^{1 - s}} \lessapprox \delta^{(s - 1)/2}, \quad B \in \calB_{0}'. \end{displaymath}
and then $\calB_{0}'$ is a $(\delta^{1/2},1 - s)$-set of cardinality $\approx \delta^{(s - 1)/2}$. Since the balls $\calB_{0}'$ satisfy precisely the same estimates as $\calB_{0}$, I will continue writing $\calB_{0} := \calB_{0}'$. For convenience, assume that $T_{0}$ is a vertical tube (that is, change coordinates so that this holds). Then the $y$-coordinates of the points $p_{B} \in B$, $B \in \calB_{0}$, form a $(\delta^{1/2},1 - s)$-set in $[-1,1]$. Denote these $y$-coordinates by $A_{1} := \{p_{B}^{y}$, $B \in \calB_{0}\}$.

\subsection{Quasi-product sets, and concluding the proof of Theorem \ref{main}}\label{quasiProduct} Now, recall (from above \eqref{form8}) the subsets $P_{B}' \subset P_{B}$, defined for $B \in \calB_{0}$. They have cardinality $|P_{B}'| \geq \delta^{C_{0}\epsilon}|P_{B}|$ for some constant $C_{0} \geq 1$. This is the constant with which one wants to apply Lemma \ref{projections}: since $T_{0} \in \calT_{B}'$ (recall \eqref{form5++}), the projection of $\pi(P_{B}')$ of $P_{B}'$ to the $x$-axis contains a $(\delta,s)$-set $\Delta_{B}$ of cardinality $|\Delta_{B}| \geq \delta^{C_{2}\epsilon-s/2}$.

Consider the "quasi-product set"
\begin{displaymath} F' := \bigcup_{p_{B}^{y} \in A_{1}} \Delta_{B} \times \{p_{B}^{y}\}. \end{displaymath}
Fix $(a,b) = (a,p_{B}^{y}) \in F'$, $B \in \calB_{0}$. Then $a = \pi(p)$ for some $p = p_{(a,b)} \in P_{B}'$, so that $|(a,b) - p| \leq 2\delta^{1/2}$. Recall that \eqref{form8} holds for $p$, and let $T \in \calT_{p} \cap \calT_{0}$. By elementary geometry, using $|(a,b) - p| \leq 2\delta^{1/2}$ and $p \in T \subset T_{0}$, the point $(a,b) \in F'$ is covered by $B(0,10) \cap T'$ for some dyadic $\delta$-tube $T'$ in the $C\delta$-neighbourhood of $T$. (By this, I mean that if $T = \calD(Q)$, then $T' = \calD(Q')$ for some dyadic $\delta$-square $Q'$ with $\dist(Q,Q') \leq C\delta$.) This is best explained by a picture, see Figure \ref{fig1}.
\begin{figure}[h!]
\begin{center}
\includegraphics[scale = 0.7]{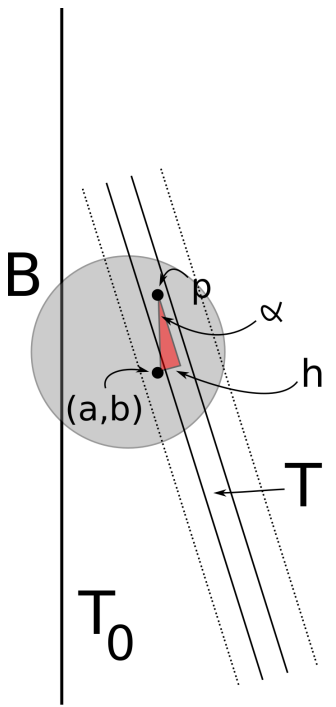}
\caption{The geometry of the tubes $T_{0},T$ and the points $(a,b)$ and $p = p_{(a,b)}$. Since $T \subset T_{0}$, the angle $\alpha$ is $\lesssim \delta^{1/2}$. Since $p,(a,b) \in B$, the distance between the points $p$ and $(a,b)$ is no greater than $2\delta^{1/2}$. Hence $h \lesssim \delta$, which ensures that $(a,b)$ is contained in $B(0,10) \cap T'$ for some dyadic $\delta$-tube $T'$ parallel to, and at distance $\lesssim \delta$, from $T \in \calT_{p}$.}\label{fig1}
\end{center}
\end{figure}

Now, for each $T \in \calT_{0}$, choose an ordinary $C\delta$-tube parallel to $T$, which covers $B(0,10) \cap T'$ for all the dyadic $\delta$-tubes $T'$ in the $C\delta$-neighbourhood of $T$. The collection of all ordinary $C\delta$-tubes so obtained is denoted by $\calT^{o}$ (here "$o$" stands for "ordinary"). Then 
\begin{displaymath} |\calT^{o}| \sim |\calT_{0}| \approx \delta^{-s}. \end{displaymath}
In particular, $\calT^{o}$ contains the ordinary $C\delta$-tubes produced from the dyadic $\delta$-tubes in $\calT_{p_{(a,b)}} \cap \calT_{0}$. By \eqref{form8} and the discussion above, this gives rise to a $(\delta,s)$-subset $\calT^{o}_{(a,b)} \subset \calT^{o}$ of ordinary $C\delta$-tubes of cardinality $|\calT^{o}_{(a,b)}| \approx \delta^{-s/2}$, with the property that
\begin{displaymath} (a,b) \in T, \qquad T \in \calT^{o}_{(a,b)}. \end{displaymath}

Finally, consider the following affine transformation of $F'$:
\begin{displaymath} F := \bigcup_{b \in A_{1}} A_{b} \times \{b\}, \end{displaymath}
where $A_{b} := \delta^{-1/2}\Delta_{B}$. Note that each $A_{b}$ is a $(\delta^{1/2},s)$-set, and recall that $A_{1}$ is a $(\delta^{1/2},1 - s)$-set. Clearly $F = \operatorname{Aff}(F')$, where $\operatorname{Aff}(x,y) = (\delta^{-1/2}x,y)$. Then $\calT' := \operatorname{Aff}(\calT^{o}) = \{\operatorname{Aff}(T) : T \in \calT^{o}\}$ is a family of ordinary $C'\delta^{1/2}$-tubes of cardinality $|\calT'| \sim |\calT| \approx \delta^{-s}$. Moreover, every point $x = (\delta^{-1/2}a,b) \in F$ is is contained in a $(\delta^{1/2},s)$-subset $\calT_{x}' \subset \calT'$ of ordinary $C'\delta^{1/2}$-tubes with $|\calT_{x}'| \approx \delta^{-s/2}$, namely $\calT_{x}' := \operatorname{Aff}(\calT_{(a,b)}^{o})$. The existence of $F$ and the families $\calT',\calT_{x}'$, now contradict the next proposition (at scale $\delta^{1/2}$, with $P = F$, $\calT = \calT'$ and $\tau = 1 - s > 0$). This completes the proof of Theorem \ref{main}.

\subsection{An incidence theorem for quasi-product sets} The wording "quasi-product set" is rather informal, and simply refers to sets $P$ of the form \eqref{P} below (if all the sets $A_{b}$ were the same, then $P$ would truly be a product set). On the last meters of the proof above, such a set, namely $F$, was constructed: it turned out that the points $x \in F$ were each incident to a large family $\calT_{x}'$ of not-too-concentrated tubes, and all the families $\calT_{x}'$ were subsets of a fixed small family $\calT'$. The next, and final, proposition shows that this is simply not possible. 

\begin{proposition}\label{productProp} Given $0 < s < 1$ and $\tau > 0$, there exists a number $\epsilon = \epsilon(s,\tau) > 0$ such that the following holds. Let $B \subset [0,1]$ be a $(\delta,\tau,\delta^{-\epsilon})$-set of cardinality $|B| \gtrsim \delta^{-\tau + \epsilon}$, and for each $b \in B$, assume that $A_{b} \subset [0,1]$ is a $(\delta,s,\delta^{-\epsilon})$-set of cardinality $|A_{b}| \gtrsim\delta^{-s + \epsilon}$. Consider the $(\delta,s + \tau,\delta^{-2\epsilon})$-set
\begin{equation}\label{P} P := \bigcup_{b \in B} A_{b} \times \{b\}. \end{equation}
Assume that $\calT$ is a collection of (ordinary) $\delta$-tubes such every family $\{T \in \calT : p \in T\}$, $p \in P$, contains in a $(\delta,s,\delta^{-\epsilon})$-set $\calT_{p} \subset \calT$ with $|\calT_{p}| \gtrsim \delta^{-s + \epsilon}$. Then $|\calT| \gtrsim \delta^{-2s - \epsilon}$. 
\end{proposition}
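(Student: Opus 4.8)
\textbf{Proof plan for Proposition \ref{productProp}.}

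The plan is to show that the incidence configuration of a quasi-product set with too few tubes would force a quantitative sum-product–type phenomenon that is forbidden by Bourgain's discretised projection theorem (Theorem 5 of \cite{Bo}, whose qualitative shadow is Theorem \ref{bourgain} above). First I would argue by contradiction: suppose $|\calT| \leq \delta^{-2s-\epsilon}$. The trivial incidence bound (Lemma \ref{2sBound}, applied with the role of ``points'' played by $P$, which is a $(\delta,1,\delta^{-C\epsilon})$-set after discarding the part of $P$ lying in a few heavy $\delta^{1/2}$-balls — here one uses $s+\tau \geq$ something, or more simply just that $P \subset [0,1]^2$ and $|P| \gtrsim \delta^{-s-\tau+O(\epsilon)}$) already gives $|\calT| \gtrsim_{\log} \delta^{-2s+O(\epsilon)}$, so $|\calT| \approx \delta^{-2s}$, and a Cauchy–Schwarz/refinement step (as in the proof of Lemma \ref{2sBound} and in Section \ref{contradictionSection}) shows that, after passing to large subsets, there are $\approx \delta^{-2s}$ ``good'' tubes $T$ with $N_T := |\{p \in P : T \in \calT_p\}| \approx \delta^{-s}$, i.e. almost every incidence is between a good tube and a point, and good tubes are essentially a ``$(\delta,2s)$-set'' in the space of tubes in the appropriate heuristic sense.

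Next I would exploit the product structure. Fix a typical $b_0 \in B$ and a typical point $p_0 = (a_0, b_0) \in A_{b_0} \times \{b_0\}$. The tubes through $p_0$ in $\calT_{p_0}$ form a $(\delta,s,\delta^{-\epsilon})$-set of directions; think of a direction as a slope $\theta$. For a second row $b \in B$, the line through $p_0$ with slope $\theta$ meets the horizontal line $\{y = b\}$ at the point $a_0 + \theta(b - b_0)$; so the pushforward of $A_{b}$ along ``which tube through $p_0$ contains a given point of $A_b \times \{b\}$'' is, essentially, an affine image $(\theta \mapsto \text{slope from } p_0 \text{ to } (a, b))$, i.e. the map $a \mapsto (a - a_0)/(b - b_0)$. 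The point is this: since $|\calT| \approx \delta^{-2s}$ and $\calT$ must contain, for \emph{every} $p = (a,b) \in P$, a $(\delta,s)$-set $\calT_p$ of $\approx \delta^{-s}$ tubes through $p$, the set of slopes occurring in $\calT$ that pass near $(a_0,b_0)$ and near $(a, b)$ is forced to be small; running this over $a$ ranging in $A_b$ (a $(\delta,s)$-set) and over $b \in B$ (a $(\delta,\tau)$-set), one obtains that the dilated difference sets $\{(A_b - a_0)/(b - b_0)\}$ all ``fit'' into a common $(\delta,s)$-set of slopes of size $\approx \delta^{-s}$ — up to the $\lessapprox$ fudge — for a positive-proportion (indeed $(\delta,\tau)$-dense) set of dilation parameters $1/(b - b_0)$. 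This is precisely the hypothesis of a $\delta$-discretised projection/sum-product statement: a single $(\delta,s)$-set $A$ (here $A = A_{b_0}$, or a refinement) whose images under a $\tau$-dimensional family of ``projections'' $x \mapsto x/(b-b_0)$ all remain $(\delta,s)$-sets of the \emph{same} size $\delta^{-s}$, with no dimension increase.

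Finally I would invoke Theorem 5 of \cite{Bo}: it asserts a gain, namely that for a $(\delta,s)$-set $A$ with $1/2 < s < 1$ (or, with the Balog–Szemer\'edi–Gowers and Pl\"unnecke–Ruzsa machinery described in the outline, reducible to this range) and a non-degenerate $(\delta,\tau)$-set of directions, all but a $(\delta,\tau)$-negligible set of the projections of $A$ have size $\geq \delta^{-s-\epsilon_0}$ for some $\epsilon_0 = \epsilon_0(s,\tau) > 0$. Combined with the ``no dimension increase'' conclusion above, this yields $\delta^{-s-\epsilon_0} \lessapprox \delta^{-s}$, i.e. $\delta^{-\epsilon_0} \lessapprox 1$, which is false once $\epsilon$ (the counter-assumption parameter, hidden in $\lessapprox$) is small enough relative to $\epsilon_0$. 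This contradiction proves $|\calT| \gtrsim \delta^{-2s-\epsilon}$. The main obstacle is the middle paragraph: carefully extracting from the sole hypothesis ``$|\calT|$ is small and every point of the quasi-product set has its $(\delta,s)$-set of tubes'' the clean statement that a fixed $(\delta,s)$-set admits a $\tau$-dimensional family of \emph{non-expanding} projections. This requires the pigeonholing/refinement of Sections \ref{scaleDeltaHalf}–\ref{quasiProduct} transported to the product setting, controlling the various exceptional sets (heavy rows $b$, heavy points $a_0$, degenerate dilation parameters $b - b_0 \approx 0$), and checking that the affine reparametrisation $a \mapsto (a-a_0)/(b-b_0)$ preserves the $(\delta,s,C_\epsilon\delta^{-C\epsilon})$-set property — after which the additive-combinatorics input of \cite{Bo} can be quoted as a black box.
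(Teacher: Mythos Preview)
Your overall architecture --- counter-assumption, extract a non-expansion statement, contradict Bourgain's discretised projection theorem --- matches the paper, but the middle paragraph has a genuine gap. Fixing a single basepoint $p_{0}=(a_{0},b_{0})$ and looking at slopes $(a-a_{0})/(b-b_{0})$ to other points of $P$ does yield (once one knows most pairs are joined by a tube) that for most $b$ a large $A_{b}'\subset A_{b}$ satisfies $(A_{b}'-a_{0})/(b-b_{0})\subset s(\calT_{p_{0}})(C\delta)$. But this is a constraint on a \emph{varying} family of sets $A_{b}'$, not on any fixed set: nothing here produces ``a fixed $(\delta,s)$-set with a $\tau$-dimensional family of non-expanding projections'', and your sentence asserting that step is precisely where the argument is missing. (Two incidental slips: near-vertical tubes hit each row once, so $N_{T}\lessapprox \delta^{-\tau}$, not $\delta^{-s}$; and Lemma~\ref{2sBound} requires a $(\delta,1)$-set, which $P$ is not --- the paper gets $|\calT|\approx\delta^{-2s}$ instead from $\sum_{T}N_{T}\gtrapprox\delta^{-2s-\tau}$ together with $N_{T}\lessapprox\delta^{-\tau}$ and the counter-assumption.)

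The device you are missing is a \emph{three-row pivot}. The paper shows, by Cauchy--Schwarz on $\sum_{b_{1},b_{2},b_{3}}|\calT^{b_{1},b_{2}}\cap\calT^{b_{2},b_{3}}|$, that for $\approx|B|^{3}$ triples there are $\approx\delta^{-2s}$ tubes passing through some $(a_{1},b_{1})$, $(a_{2},b_{2})$, $(a_{3},b_{3})$ simultaneously; collinearity then forces $a_{1}+\tfrac{b_{2}-b_{1}}{b_{3}-b_{2}}\,a_{3}\in A_{b_{2}}(C\delta)$, i.e.\ a large subset $G'_{b_{1},b_{2},b_{3}}\subset A_{b_{1}}\times A_{b_{3}}$ has $\pi_{b_{1},b_{2},b_{3}}$-image of size $\lessapprox\delta^{-s}$. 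Now fix the \emph{outer} rows $b_{1},b_{3}$ and vary the \emph{middle} row $b_{2}$: this gives many directions in which large subsets of the \emph{same} product $A_{b_{1}}\times A_{b_{3}}$ project small. Balog--Szemer\'edi--Gowers followed by Pl\"unnecke--Ruzsa upgrades this partial-sumset information to a fixed $D^{2}\subset A_{b_{3}}$ with $N(cD^{2}+c_{b}D^{2},\delta)\lessapprox\delta^{-s}$ for a $(\delta,\tau)$-set of parameters $c_{b}$, and Bourgain's Theorem~5 is applied to the \emph{planar} set $D^{2}\times D^{2}$ with $\alpha=2s$, not to a one-dimensional $(\delta,s)$-set as you propose.
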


The proof of Proposition \ref{productProp} is, again, based on a counter assumption, namely $|\calT| \leq \delta^{-2s - \epsilon}$. For the remainder of the paper, the notations $\lessapprox$, $\gtrapprox$ and $\approx$, and the concept of $(\delta,t)$-set, are defined exactly as before, in Section \ref{mainCounterAss}, but now relative to the "$\epsilon$" in this counter assumption. Naturally, the implicit constants $C,C_{\epsilon}$ are now also allowed to depend on $\tau$, in addition to $s$. 

Before starting the proof of Proposition \ref{productProp} in earnest, I recall two standard results from additive combinatorics. The first is the \emph{Balog-Szemer\'edi-Gowers theorem}. The statement below is taken verbatim from p. 196 in \cite{Bo}. For a proof, see \cite{TV}, p. 267.
\begin{thm}[Balog-Szemer\'edi-Gowers]\label{BSG} There exists an absolute constant $C \geq 1$ such that the following holds. Let $A,B \subset \R$ be finite sets, and assume that $G \subset A \times B$ is a set of pairs such that
\begin{displaymath} |G| \geq \frac{|A||B|}{K} \quad \text{and} \quad |\{x + y : (x,y) \in G\}| \leq K|A|^{1/2}|B|^{1/2} \end{displaymath}
for some $K > 1$. Then, there exist $A' \subset A$ and $B' \subset B$ satisfying
\begin{itemize}
\item $|A'| \geq K^{-C}|A|$, $|B'| \geq K^{-C}|B|$,
\item $|A' + B'| \leq K^{C}|A|^{1/2}|B|^{1/2}$, and
\item $|G \cap (A' \times B')| \geq K^{-C}|A||B|$. 
\end{itemize}
\end{thm}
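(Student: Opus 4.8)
The plan is to argue by contradiction, starting from the counter assumption $|\calT| \leq \delta^{-2s - \epsilon}$, and to push the incidence data through additive combinatorics until it contradicts Bourgain's discretised sum--product/projection theorem (Theorem 5 in \cite{Bo}). Throughout I use the conventions $\lessapprox, \gtrapprox, \approx$ and the notion of $(\delta,t)$-set relative to the counter assumption $\epsilon$, exactly as in Section \ref{mainCounterAss}.

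First I would carry out some standard reductions. After a dyadic decomposition of the slopes of the tubes in $\calT$ and pigeonholing (losing only $\lessapprox 1$ factors), one may assume all tubes in $\calT$ have slope in a single dyadic interval of length $\sim 1$ — say slopes in $[1,2]$ — so that every $T \in \calT$ meets $P$ in $\lessapprox \delta^{-\tau}$ points (a $\delta$-tube of bounded slope meets each fiber $\{y = b\}$ in a $\lesssim \delta$-interval, hence in $\lesssim 1$ point of the $(\delta,s)$-set $A_{b}$, and there are $\lessapprox \delta^{-\tau}$ fibers). Since $|I(P,\calT)| = \sum_{p} |\calT_{p}| \approx \delta^{-2s - \tau}$ by \eqref{P} and the hypotheses, while $|\calT| \lessapprox \delta^{-2s}$ and $N_{T} \lessapprox \delta^{-\tau}$ for every $T$, a Cauchy--Schwarz/popularity argument (exactly as in the proof of Lemma \ref{2sBound}, which also supplies the matching lower bound $|\calT| \gtrapprox \delta^{-2s + 6\epsilon}$) shows the configuration is \emph{saturated}: after refining $P$ and $\calT$ one may assume $N_{T} \approx \delta^{-\tau}$ for all $T \in \calT$ and $|\calT_{p}| \approx \delta^{-s}$ for all $p \in P$. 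A further dyadic pigeonholing on the number $k_{v}$ of tubes of $\calT$ with a given direction $v$ lets me assume $k_{v} \approx k$ for all directions $v$ occurring; then a count gives that the set $V$ of occurring directions has $|V| \approx \delta^{-2s}/k$, and that for each $v \in V$ the set $P^{v} := \{p \in P : \calT_{p} \text{ contains a tube of direction } v\}$ satisfies $|P^{v}| \approx k \delta^{-\tau}$ and is covered by $\approx k$ tubes of direction $v$, i.e. $N(\pi_{v^{\perp}}(P^{v}),\delta) \lessapprox k$.

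The heart of the matter is to show this is impossible for small $\epsilon$. I would translate it into sum--product language: writing $v = v(m)$ for the direction of slope $m$, the projection $\pi_{v(m)^{\perp}}$ sends $(a,b) \mapsto b - ma$ (up to harmless normalisation), so "$P^{v}$ is covered by $\approx k$ $\delta$-tubes of slope $m$" says that $\bigcup_{b} (b - mA_{b}^{v})$ has $\delta$-covering number $\lessapprox k$, where $A_{b}^{v} := \{a : (a,b) \in P^{v}\}$. Comparing this across pairs of fibers forces the dilated-translated fibers $b_{1} - mA_{b_{1}}^{v}$ and $b_{2} - mA_{b_{2}}^{v}$ to nearly coincide, for every $m$ with $v(m) \in V$ — a strong rigidity in a large family of dilations. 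Dualising the incidence structure between $P$ and $\calT$ and applying the Balog--Szemer\'edi--Gowers theorem (Theorem \ref{BSG}), fiber by fiber, I would extract structured subsets of the $A_{b}$ on which the relevant ratio/difference sets are genuinely small; the Pl\"unnecke--Ruzsa inequalities (see \cite{TV}) then let me iterate these sumset bounds and pass to a single large quasi-product subset $P' \subset P$ (with the same fiber dimensions $s$ and base dimension $\tau$) whose projection in \emph{every} direction of the family $V$, of cardinality $|V| \approx \delta^{-2s}/k$, has $\delta$-covering number much smaller than the expected near-maximal value $\approx \delta^{-s-\epsilon_{s}}$ furnished by a discretised projection theorem. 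Using the $(\delta,s)$-set property of the fibers and the $(\delta,\tau)$-set property of $B$ to verify the non-concentration hypotheses (no large mass near a point or near a line — both follow from the quasi-product structure), this final configuration is precisely the one ruled out by Theorem 5 in \cite{Bo}, because $|V|$ slightly exceeds the admissible "exceptional set of directions" threshold. The resulting contradiction caps $\epsilon$ by some $\epsilon(s,\tau) > 0$, proving the proposition.

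The only genuinely non-routine part — and the step I expect to be the main obstacle — is the middle one: converting the $v$-dependent sets $P^{v}$ (a different subset of $P$ for each $v$, possibly thin when $k$ is small) into one fixed, large quasi-product subset of $P$ with small projections in a large family of directions, so that Bourgain's theorem applies cleanly for \emph{all} values of the pigeonholed multiplicity $k$. This is exactly where the Balog--Szemer\'edi--Gowers theorem and the Pl\"unnecke--Ruzsa inequalities are indispensable, and making all the accumulated $\delta^{-C\epsilon}$ and logarithmic losses close against the fixed gain $\epsilon_{s}$ from Theorem 5 of \cite{Bo} will require careful bookkeeping. A secondary technicality is checking that the subsets of the fibers produced by the refinements are still $(\delta,s)$-sets and that the refined point set is not concentrated near a line; both are consequences of the quasi-product structure but must be tracked explicitly through each refinement.
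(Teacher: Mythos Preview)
Your proposal does not address the stated theorem. The statement in question is the Balog--Szemer\'edi--Gowers theorem (Theorem~\ref{BSG}), which the paper does \emph{not} prove: it is quoted as a known result, with a reference to p.~267 of \cite{TV} for the proof. What you have written is instead a proof sketch for Proposition~\ref{productProp} (the incidence theorem for quasi-product sets), which \emph{uses} Theorem~\ref{BSG} as a black-box tool. You even cite Theorem~\ref{BSG} inside your argument, so you are plainly not attempting to establish it.

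As a side remark, even read as an outline for Proposition~\ref{productProp}, your route diverges from the paper's. The paper does not pigeonhole on tube multiplicities $k_{v}$ or work with direction-dependent subsets $P^{v}$; instead it passes through triples $(b_{1},b_{2},b_{3}) \in B^{3}$, shows that for $\approx |B|^{3}$ such triples a large subset $G'_{b_{1},b_{2},b_{3}} \subset A_{b_{1}} \times A_{b_{3}}$ has small projection under the explicit linear map $\pi_{b_{1},b_{2},b_{3}}(x,y) = x + \tfrac{b_{2}-b_{1}}{b_{3}-b_{2}}y$, then applies BSG and Pl\"unnecke--Ruzsa to a \emph{fixed} good triple to produce sets $D^{1},D^{2}$ with controlled sumsets, and finally fixes $b_{1},b_{2},b_{3}$ and varies $b$ to feed $D^{2} \times D^{2}$ into Theorem~5 of \cite{Bo}. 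Your sketch leaves the ``conversion'' step you flag as the main obstacle genuinely unresolved, whereas the paper's triple-counting device sidesteps it entirely. But the primary issue remains that you have targeted the wrong statement.
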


The second auxiliary result is the \emph{Pl\"unnecke-Ruzsa inequality}, whose proof can also be found in \cite{TV}:
\begin{thm}[Pl\"unnecke-Ruzsa]\label{PR} Assume that $A,B \subset \R$ are finite sets such that
\begin{displaymath} |A + B| \leq C|A| \end{displaymath}
for some integer $C \geq 1$. Then
\begin{displaymath} |B^{m} \pm B^{n}| \leq C^{m + n}|A| \end{displaymath}
for all $m,n \in \N$.
\end{thm}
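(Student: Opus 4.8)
I would prove this by Petridis's elementary, graph-free argument (an exposition of which is in \cite{TV}); throughout, $mB$ denotes the $m$-fold sumset $B + \cdots + B$ written $B^{m}$ in the statement, and $0B := \{0\}$. The first move is to replace $A$ by a convenient subset: among all nonempty $X \subseteq A$, fix one, say $A_{0}$, minimizing the ratio $|X+B|/|X|$, and set $K := |A_{0}+B|/|A_{0}|$. Since $X = A$ is admissible, $K \le |A+B|/|A| \le C$; and by the minimizing property every nonempty $Y \subseteq A_{0}$ satisfies $|Y+B| \ge K|Y|$. It then suffices to prove $|mB \pm nB| \le K^{m+n}|A_{0}|$, since $K \le C$ and $|A_{0}| \le |A|$.

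The heart of the argument is Petridis's lemma: for every finite set $S$ one has $|A_{0}+B+S| \le K|A_{0}+S|$. I would prove this by induction on $|S|$, the cases $|S| \le 1$ being immediate from the definition of $K$. For the inductive step write $S = S' \cup \{\gamma\}$ with $\gamma \notin S'$, and let $E := \{a \in A_{0} : a + \gamma + B \subseteq A_{0} + S' + B\}$ be the set of points "already absorbed". Since $E + \gamma + B$ lies in both $A_{0}+S'+B$ and $A_{0}+\gamma+B$, inclusion--exclusion together with $|A_{0}+\gamma+B| = K|A_{0}|$, the induction hypothesis, and the lower bound $|E+B| \ge K|E|$ (here, and only here, the minimizing property is used) yields $|A_{0}+S+B| \le K|A_{0}+S'| + K|A_{0}\setminus E|$. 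To conclude, one observes that translation $a \mapsto a+\gamma$ maps $A_{0}\setminus E$ injectively into $(A_{0}+S)\setminus(A_{0}+S')$ --- indeed $a+\gamma \in A_{0}+S'$ would force $a + \gamma + B \subseteq A_{0}+S'+B$, i.e. $a \in E$ --- so $|A_{0}\setminus E| \le |A_{0}+S| - |A_{0}+S'|$, and the bound follows. Applying the lemma repeatedly with $S = (n-1)B$ gives $|A_{0} + nB| \le K^{n}|A_{0}|$ for all $n$.

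For the difference set I would invoke the Ruzsa-type inequality $|X-Y|\,|Z| \le |X+Z|\,|Y+Z|$, valid for all finite $X,Y,Z$: fixing for each $d \in X-Y$ a representation $d = x_{d}-y_{d}$, the map $(d,z) \mapsto (x_{d}+z,\,y_{d}+z)$ embeds $(X-Y)\times Z$ into $(X+Z)\times(Y+Z)$, because subtracting the two coordinates recovers $d$, hence $x_{d},y_{d}$, hence $z$. Taking $X = mB$, $Y = nB$, $Z = A_{0}$ and inserting the bound of the previous paragraph gives $|mB-nB|\,|A_{0}| \le K^{m+n}|A_{0}|^{2}$, i.e. $|mB-nB| \le K^{m+n}|A_{0}|$. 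The sum case is easier still: $|mB+nB| = |(m+n)B| \le |A_{0}+(m+n)B| \le K^{m+n}|A_{0}|$, using $|S| \le |A_{0}+S|$ (translate by a point of $A_{0}$). Combined with the first paragraph, this gives $|B^{m} \pm B^{n}| \le C^{m+n}|A|$.

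The one genuinely delicate step is Petridis's lemma --- pinning down the correct definition of the absorbed set $E$ and checking that translation by $\gamma$ lands $A_{0}\setminus E$ in the complement $(A_{0}+S)\setminus(A_{0}+S')$. Everything else (the passage to the minimizer, the iteration, and the Ruzsa embedding) is routine bookkeeping.
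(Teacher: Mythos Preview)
Your proof is correct. The paper does not actually prove Theorem~\ref{PR}: it simply states the result and refers the reader to \cite{TV}, so there is no ``paper's own proof'' to compare against. Your argument via Petridis's lemma is clean and complete; the inductive step is handled correctly (in particular the verification that $a \mapsto a+\gamma$ sends $A_{0}\setminus E$ into $(A_{0}+S)\setminus(A_{0}+S')$), and the passage to differences via the Ruzsa embedding $(d,z)\mapsto(x_{d}+z,y_{d}+z)$ is the standard one.

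One minor bibliographic remark: the first edition of \cite{TV} (2006) predates Petridis's argument (2011--2012), so if that is the edition cited, the exposition found there is the original graph-theoretic Pl\"unnecke--Ruzsa proof via Menger's theorem and layered commutative graphs, not the argument you have written. This does not affect the correctness of your proof, but you should either cite Petridis directly or adjust the parenthetical. Compared to the graph-theoretic route, your approach is shorter and entirely self-contained, at the cost of the slightly subtle choice of the absorbed set~$E$ that you rightly flag as the delicate point.
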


\begin{remark} Theorem \ref{PR} will be applied in the following form: if $A,B \subset \R$ are $\delta$-separated sets with $|A| \approx |B|$ and
\begin{displaymath} N(A + B,\delta) \lessapprox |A|, \end{displaymath}
then $N(B + B,\delta) \lessapprox |A|$. This statement follows easily from Theorem \ref{PR} by considering the sets $[A]_{\delta} = \{[a]_{\delta} : a \in A\} \subset \delta \Z$ and $[B]_{\delta} := \{[b]_{\delta} ; b \in B\} \subset \delta \Z$, where $[x]_{\delta} \in \delta \Z$ stands for the largest number $\delta n \in \delta \Z$ satisfying $\delta n \leq x$. Then the hypothesis $N(A + B,\delta) \lessapprox |A|$ implies that $|[A]_{\delta} + [B]_{\delta}| \lessapprox |[A]|_{\delta}$, so Theorem \ref{PR} can be applied.
\end{remark}

\begin{proof}[Proof of Proposition \ref{productProp}] I start by making three convenient extra assumptions, which are not difficult to arrange. First, every tube in $\calT$ meets only one point in each set $A_{b} \times \{b\}$ (that is, the tubes in $\calT$ are "roughly vertical"); this can be arranged by restricting attention to those tubes in each $\calT_{p}$, which form an angle $\gtrapprox 1$ with horizontal lines. By the $(\delta,s)$-set hypothesis, $\approx \delta^{-s}$ tubes remain in each $\calT_{p}$, and then one can re-define $\calT$ as the union of the reduced families $\calT_{p}$. In particular, now each tube in $\calT$ only intersects the lines $\R \times \{b\}$, $b \in B$, inside a single interval of length $\lessapprox \delta$. After this procedure, one can remove some points from each $A_{b} \times \{b\}$ so that the mutual separation exceeds the length of the intervals mentioned above; again, by the $(\delta,s)$-set hypothesis, this can be arranged to that $\approx \delta^{-s}$ points remain for every $b$. 

Second, 
\begin{displaymath} A_{b} \subset \delta \Z, \qquad b \in B. \end{displaymath}
This can be arranged by perturbing the points of $A_{b}$ by $\leq \delta$. The tubes in $\calT_{p}$ may no longer contain $p$, but the $100\delta$-neighbourhoods of the tubes in $\calT_{p}$ certainly do. These neighbourhoods can be covered by $200$ ordinary $\delta$-tubes, each, which gives rise to a new family $\calT'$ of ordinary $\delta$-tubes with $|\calT'| \sim |\calT|$. Then, one can prove the proposition for $\calT'$ instead of $\calT$.

 Third, if $b_{1},b_{2},b_{3} \in B$, and $T,T' \in \calT$ are two tubes both containing certain points $(a_{1},b_{1}) \in A_{b_{1}} \times \{b_{1}\}$ and $(a_{3},b_{3}) \in A_{b_{3}} \times \{b_{3}\}$, then $T \cup T'$ can only contain one point in $A_{b_{2}} \times \{b_{2}\}$. This is similar to the first reduction: it follows from the assumption $T \cup T'$ intersects $\R \times \{b_{2}\}$ inside a single interval of length $\lessapprox \delta$ (since $T \cup T'$ is contained in the $\delta$-neighbourhood of the line connecting $(a_{1},b_{1})$ to $(a_{3},b_{3})$, and both tubes were already assumed to be roughly vertical). Thus, if the separation of $A_{b_{2}}$ exceeds the length of any such interval, the claim is true. And this can, as before, be arranged by discarding a few points from $A_{2}$.

The proof starts in earnest now, and I make the counter assumption $|\calT| \leq \delta^{-2s - \epsilon}$, or, in short,
\begin{equation}\label{counterAssumption} |\calT| \lessapprox \delta^{-2s}. \end{equation} 

For $T \in \calT$, write
\begin{displaymath} N_{T} := |\{p \in P : T \in \calT_{p}\}|. \end{displaymath}
Then, by the first "convenient extra assumption" above, one has the uniform bound 
\begin{equation}\label{form11} N_{T} \leq |B| \lessapprox \delta^{-\tau}. \end{equation}
On the other hand
\begin{displaymath} \sum_{T \in \calT} N_{T} = \sum_{p \in P} |\calT_{p}| \gtrapprox |P|\delta^{-s} = \delta^{-2s - \tau}. \end{displaymath}
By the counter assumption \eqref{counterAssumption}, one sees that $N_{T} \approx \delta^{-\tau}$ for $\approx \delta^{-2s}$ tubes in $\calT$. Consequently,
\begin{equation}\label{form12} \sum_{T \in \calT} |\{(p,q) \in P \times P : p \sim_{T} q\}| \gtrapprox \delta^{-2s - 2\tau}, \end{equation}
where $p \sim_{T} q$, if and only if $p \neq q$ and $T \in \calT_{p} \cap \calT_{q}$. Write $p \sim q$, if $p \sim_{T} q$ for some $T \in \calT$, and $Q := \{(p,q) : p \sim q\}$. Then the left hand side of the inequality above can be re-written and estimated as
\begin{displaymath} \sum_{p \sim q} |\calT_{p} \cap \calT_{q}| \lessapprox \sum_{p \sim q} \frac{1}{|p - q|^{s}} \lesssim |Q|^{1/r_{1}} \left( \sum_{p \neq q} \frac{1}{|p - q|^{s + \tau}} \right)^{1/r_{2}} \lessapprox |Q|^{1/r_{1}}|P|^{2/r_{2}}. \end{displaymath} 
The first inequality follows from the $(\delta,s)$-set hypothesis of either $\calT_{p}$ or $\calT_{q}$ (as in \eqref{form6}). The numbers $r_{1},r_{2} > 1$ are dual exponents such that $sr_{2} = s + \tau$, and the last inequality follows from the fact that $P$ is a $(\delta,s + \tau)$-set with $|P| \approx \delta^{-s - \tau}$. From this and \eqref{form12}, one infers that
\begin{displaymath} |Q| \gtrapprox |P|^{2}. \end{displaymath}
In heuristic terms, this shows that the graph with vertex set $P$ and edge set $\{(p,q) : p \sim q\}$ has almost maximal connectivity. Since there are no "edges" $p \sim q$ with $p,q \in A_{b} \times \{b\}$ for any fixed $b \in B$, the inequality above implies
\begin{equation}\label{form130} \sum_{b_{1} \neq b_{2}} |\{(p,q) \in A^{b_{1}} \times A^{b_{2}} : p \sim q\}| = |Q| \gtrapprox |P|^{2} \approx \delta^{-2s - 2\tau}, \end{equation}
where $A^{b_{i}} = A_{b_{i}} \times \{b_{i}\}$. 

Let $b_{1},b_{2} \in B$, and assume that $(p,q) \in A^{b_{1}} \times A^{b_{2}}$ satisfy $p \sim q$. Then, by definition, there exists at least one tube $T_{(p,q)} \in \calT_{p} \cap \calT_{q}$. If there are several, pick exactly one and call it $T_{(p,q)}$. Also, make these choices so that $T_{(p,q)} = T_{(q,p)}$. Then, set 
\begin{displaymath} \calT^{b_{1},b_{2}} := \{T_{(p,q)} : (p,q) \in A^{b_{1}} \times A^{b_{2}} \text{ and } p \sim q\}, \end{displaymath}
and note that 
\begin{equation}\label{form25} \calT^{b_{1},b_{2}} = \calT^{b_{2},b_{1}}, \quad b_{1}, b_{2} \in B. \end{equation}
Now, if $(p_{1},q_{1}),(p_{2},q_{2}) \in A^{b_{1}} \times A^{b_{2}}$ are two distinct pairs, then the collections $\calT_{p_{1}} \cap \calT_{q_{1}}$ and $\calT_{p_{2}} \cap \calT_{q_{2}}$ are disjoint. Indeed, if $p_{1} \neq p_{2}$, say, then no tube can lie in both $\calT_{p_{1}}$ and $\calT_{p_{2}}$ (since this would imply $p_{1} \sim p_{2}$). This implies that $T_{(p_{1},q_{1})} \neq T_{(p_{2},q_{2})}$, and consequently $|\calT^{b_{1},b_{2}}| \geq |\{(p,q) \in A^{b_{1}} \times A^{b_{2}} : p \sim q\}|$. Hence
\begin{equation}\label{form15} \sum_{b_{1}, b_{2}} |\calT^{b_{1},b_{2}}| \gtrapprox |P|^{2} \approx \delta^{-2s - 2\tau} \end{equation}
by \eqref{form130}. Now, using the counter assumption $|\calT| \lessapprox \delta^{-2s}$, and recalling \eqref{form25}, one can perform the following estimate:
\begin{align*} \sum_{b_{1},b_{2},b_{3}} |\calT^{b_{1},b_{2}} \cap \calT^{b_{2},b_{3}}| & = \sum_{T \in \calT} \sum_{b_{2}} \sum_{b_{1}, b_{3}} \chi_{\calT^{b_{1},b_{2}}}(T)\chi_{\calT^{b_{2},b_{3}}}(T)\\
& = \sum_{T \in \calT} \sum_{b_{2}} \left(\sum_{b} \chi_{\calT^{b,b_{2}}}(T) \right)^{2}\\
& \geq \frac{1}{|\calT||B|} \left( \sum_{T \in \calT} \sum_{b,b_{2}} \chi_{\calT^{b,b_{2}}}(T) \right)^{2}\\
& \gtrapprox \frac{|P|^{4}}{|\calT||B|} \gtrapprox \delta^{-2s}|B|^{3}. \end{align*} 
Since evidently $|\calT^{b_{1},b_{2}} \cap \calT^{b_{2},b_{3}}| \leq |\calT| \lessapprox \delta^{-2s}$ for any triple $(b_{1},b_{2},b_{3})$, it follows that there exist $\approx |B|^{3}$ triples $(b_{1},b_{2},b_{3})$ with the property that 
\begin{equation}\label{form18} |\calT^{b_{1},b_{2}} \cap \calT^{b_{2},b_{3}}| \approx \delta^{-2s}. \end{equation}
As will be made precise in a moment, the condition $|\calT^{b_{1},b_{2}} \cap \calT^{b_{2},b_{3}}| \approx \delta^{-2s}$ roughly means that there are $\approx \delta^{-2s}$ points in $A_{b_{1}} \times A_{b_{3}}$ such that the projection of these points is small in a certain direction, determined by $b_{1},b_{2},b_{3}$. 

Consider a triple of distinct points $b_{1},b_{2},b_{3} \in B^{3}$ with $\calT^{b_{1},b_{2}} \cap \calT^{b_{2},b_{3}} \neq \emptyset$. Fix $T \in \calT^{b_{1},b_{2}} \cap \calT^{b_{2},b_{3}}$. Since $T \in \calT^{b_{1},b_{2}}$, one has $T \in \calT_{p_{1}} \cap \calT_{p_{2}}$ for some unique pair of points 
\begin{displaymath} p_{1} = (a_{1},b_{1}) \in A^{b_{1}} \quad \text{and} \quad p_{2} = (a_{2},b_{2}) \in A^{b_{2}}. \end{displaymath}
Similarly, because $T \in \calT^{b_{2},b_{3}}$, there exists yet another unique point 
\begin{displaymath} p_{3} = (a_{3},b_{3}) \in A^{b_{3}} \end{displaymath}
such that $T \in \calT_{p_{2}} \cap \calT_{p_{3}}$. In particular, gathering all the pairs $(a_{1},a_{3}) \in A_{b_{1}} \times A_{b_{3}}$ obtained this way, one sees that the tubes $T \in \calT^{b_{1},b_{2}} \cap \calT^{b_{2},b_{3}}$ give rise to a subset 
\begin{displaymath} G_{b_{1},b_{2},b_{3}}' \subset A_{b_{1}} \times A_{b_{3}}. \end{displaymath}
of cardinality 
\begin{equation}\label{form14} |G_{b_{1},b_{2},b_{3}}'| = |\calT^{b_{1},b_{2}} \cap \calT^{b_{2},b_{3}}|. \end{equation}
To see the cardinality claim, one needs to check that distinct tubes $T,T' \in \calT^{b_{1},b_{2}} \cap \calT^{b_{2},b_{3}}$ give rise to distinct pairs in $(a_{1},a_{3}),(a_{1}',a_{3}') \in A_{b_{1}} \times A_{b_{3}}$. For $T$ and $T'$, let $p_{1},p_{2},p_{3}$ and $p_{1}',p_{2}',p_{3}'$ be the unique points above. Suppose, for contradiction, that $a_{1} = a_{1}'$ and $a_{3} = a_{3}'$, which means that $p_{1} = p_{1}'$ and $p_{3} = p_{3}'$. Then $T,T' \in \calT_{p_{1}} \cap \calT_{p_{2}}$ and $T,T' \in \calT_{p_{1}} \cap \calT_{p_{2}'}$. This implies that $p_{2} \neq p_{2}'$, since otherwise two tubes in $T,T' \in \calT_{p_{1}} \cap \calT_{p_{2}}$ would have been chosen to $\calT^{b_{1},b_{2}}$ contrary to the construction. But then $T,T'$ are tubes both containing the points $p_{1} \in A^{b_{1}}$ and $p_{3} \in A^{b_{3}}$ such that the union $T \cup T'$ contains two distinct points $p_{2},p_{2}' \in A^{b_{2}}$. This contradicts the third "convenient extra assumption" made at the beginning of the proof, and establishes \eqref{form14}.

From now on, restrict attention to triples $(b_{1},b_{2},b_{3}) \in B^{3}$ such that
\begin{equation}\label{separation} \min_{i \neq j} |b_{i} - b_{j}| \approx 1. \end{equation}
Since the set of triples satisfying $\min |b_{i} - b_{j}| \leq \delta^{C\epsilon}$ for $C \geq 1$ has cardinality no larger than $\lessapprox \delta^{C\tau \epsilon - \tau}|B|^{2} \lessapprox \delta^{C \tau \epsilon}|B|^{3}$ (using the $(\delta,\tau)$-set hypothesis of $B$), a large enough choice of $C$, depending on $\tau$, guarantees that $|\calT^{b_{1},b_{2}} \cap \calT^{b_{2},b_{3}}| \approx \delta^{-2s}$ holds for $\approx |B|^{3}$ triples satisfying \eqref{separation}. Fix one such triple, and consider a pair $(a_{1},a_{3}) \in G_{b_{1},b_{2},b_{3}}'$. Recall how such points arise, and the notation for $p_{1},p_{2},p_{3}$. Let 
\begin{displaymath} L = \left\{x = \frac{a_{3} - a_{1}}{b_{3} - b_{1}}y + \frac{a_{1}b_{3} - a_{3}b_{1}}{b_{3} - b_{1}} : y \in \R\right\} \end{displaymath}
be the line spanned by $p_{1}$ and $p_{3}$; then, since $p_{1},p_{2},p_{3}$ all lie in the common $\delta$-tube $T \in \calT$, the line $L$ passes at distance $\lesssim \delta$ from $p_{2} = (a_{2},b_{2}) \in A_{b_{2}} \times \{b_{2}\}$, which implies
\begin{displaymath} \left| \frac{a_{3}(b_{2} - b_{1}) + a_{1}(b_{3} - b_{2})}{b_{3} - b_{1}} - a_{2} \right| \lessapprox \delta, \end{displaymath}
using the fact that the tubes in $\calT$ are nearly vertical. Recalling \eqref{separation}, this further implies that
\begin{displaymath} \left| \left(a_{1} + \frac{b_{2} - b_{1}}{b_{3} - b_{2}} a_{3} \right) - \frac{b_{3} - b_{1}}{b_{3} - b_{2}}a_{2} \right| \lessapprox \delta. \end{displaymath}
Consequently, if $\pi_{b_{1},b_{2},b_{3}}$ stands for the projection-like mapping
\begin{equation}\label{pi} \pi_{b_{1},b_{2},b_{3}}(x,y) = x + \frac{b_{2} - b_{1}}{b_{3} - b_{2}}y, \end{equation}
then $\pi_{b_{1},b_{2},b_{3}}(G_{b_{1},b_{2},b_{3}}')$ is contained in the $\lessapprox \delta$-neighbourhood of 
\begin{displaymath} \frac{b_{3} - b_{1}}{b_{3} - b_{2}}A_{b_{2}}. \end{displaymath}
Observing that $N([(b_{3} - b_{1})/(b_{3} - b_{2})]A_{b_{2}},\delta) \lessapprox \delta^{-s}$ by \eqref{separation}, it follows that
\begin{equation}\label{form11} N(\pi_{b_{1},b_{2},b_{3}}(G_{b_{1},b_{2},b_{3}}'),\delta) \lessapprox \delta^{-s}. \end{equation}

This holds for any triple $(b_{1},b_{2},b_{3}) \in B^{3}$ satisfying \eqref{separation} by definition of $G_{b_{1},b_{2},b_{3}}'$, but the information is most useful, if $|G_{b_{1},b_{2},b_{3}}'| \approx \delta^{-2s} \approx |A_{b_{1}} \times A_{b_{3}}|$, which holds for $\approx |B|^{3}$ triples (recall \eqref{form14} and \eqref{form18}). Write
\begin{displaymath} F_{b_{1},b_{2},b_{3}} := \left\{\left(a_{1},\left[\frac{b_{2} - b_{1}}{b_{3} - b_{2}}a_{3}\right]_{\delta} \right) : (a_{1},a_{3}) \in G'_{b_{1},b_{2},b_{3}}\right\} \subset A_{b_{1}} \times \left[\frac{b_{2} - b_{1}}{b_{3} - b_{2}}A_{b_{3}}\right]_{\delta}. \end{displaymath}
Recall that $[r]_{\delta}$ stands for the largest number of the form $\delta n$, $n \in \Z$, with $\delta n \leq r$, and $[R]_{\delta} := \{[r]_{\delta} : r \in R\}$. It follows easily from \eqref{form11} (and recalling $A_{b_{1}} \subset \delta \Z$) that
\begin{displaymath} |\{t_{1} + t_{2} : (t_{1},t_{2}) \in F_{b_{1},b_{2},b_{3}} \}| \lessapprox \delta^{-s}. \end{displaymath}
Moreover, since $|(b_{2} - b_{1})/(b_{3} - b_{2})| \approx 1$ for every triple $(b_{1},b_{2},b_{3})$ satisfying \eqref{separation}, it follows that $|F_{b_{1},b_{2},b_{3}}| \approx \delta^{-2s}$ whenever \eqref{separation} holds and $|G_{b_{1},b_{2},b_{3}}'| \approx \delta^{-2s}$. For such a \emph{good} triple $(b_{1},b_{2},b_{3})$, the Balog-Szemer\'edi-Gowers theorem, Theorem \ref{BSG}, implies that there exist subsets
\begin{displaymath} D^{1}_{b_{1},b_{2},b_{3}} \subset A_{b_{1}} \quad \text{and} \quad \tilde{D}^{2}_{b_{1},b_{2},b_{3}} \subset \left[\frac{b_{2} - b_{1}}{b_{3} - b_{2}}A_{b_{3}}\right]_{\delta} \end{displaymath}
such that $|D^{1}_{b_{1},b_{2},b_{3}}|,|\tilde{D}^{2}_{b_{1},b_{2},b_{3}}| \approx \delta^{-s}$,
\begin{equation}\label{form17} |(D^{1}_{b_{1},b_{2},b_{3}} \times \tilde{D}^{2}_{b_{1},b_{2},b_{3}}) \cap F_{b_{1},b_{2},b_{3}}| \approx \delta^{-2s} \end{equation}
and
\begin{equation}\label{form118} |D^{1}_{b_{1},b_{2},b_{3}} + \tilde{D}^{2}_{b_{1},b_{2},b_{3}}| \lessapprox \delta^{-s}. \end{equation}
Let
\begin{displaymath} D_{b_{1},b_{2},b_{3}}^{2} := \left\{a_{3} \in A_{b_{3}} : \left[\frac{b_{2} - b_{1}}{b_{3} - b_{2}}a_{3} \right]_{\delta} \in \tilde{D}^{2}_{b_{1},b_{2},b_{3}} \right\}. \end{displaymath}
It then follows from the definition of $F_{b_{1},b_{2},b_{3}}$ and \eqref{form17} that
\begin{equation}\label{form112} |G_{b_{1},b_{2},b_{3}}| := |(D^{1}_{b_{1},b_{2},b_{3}} \times D^{2}_{b_{1},b_{2},b_{3}}) \cap G_{b_{1},b_{2},b_{3}}'| \approx \delta^{-2s}. \end{equation}
for a good triple $(b_{1},b_{2},b_{3})$. Moreover, \eqref{form118} easily implies that
\begin{equation}\label{form13} N_{1} := N\left(D^{1}_{b_{1},b_{2},b_{3}} + \frac{b_{2} - b_{1}}{b_{3} - b_{2}}D^{2}_{b_{1},b_{2},b_{3}},\delta\right) \lessapprox \delta^{-s}. \end{equation}
Finally, combining \eqref{form13} with the Pl\"unnecke-Ruzsa inequality, Theorem \ref{PR}, gives
\begin{equation}\label{form16} N_{2} := N(D^{2}_{b_{1},b_{2},b_{3}} + D^{2}_{b_{1},b_{2},b_{3}},\delta) \approx N\left(\frac{b_{2} - b_{1}}{b_{3} - b_{2}}D^{2}_{b_{1},b_{2},b_{3}} + \frac{b_{2} - b_{1}}{b_{3} - b_{2}}D^{2}_{b_{1},b_{2},b_{3}},\delta \right) \lessapprox \delta^{-s} \end{equation}
for any good triple $(b_{1},b_{2},b_{3})$. Since there are $\approx |B|^{3}$ good triples $(b_{1},b_{2},b_{3})$, one can find $b_{1},b_{3}$ such that \eqref{form112}--\eqref{form16} hold for $\approx |B|$ choices of $b_{2}$ (and so that $(b_{1},b_{2},b_{3})$ remains a good triple). Fix such $b_{1},b_{3} \in B$. Then, a simple Cauchy-Schwarz argument (similar to the one before \eqref{form5}) shows that $|G_{b_{1},b_{2},b_{3}} \cap G_{b_{1},b_{2}',b_{3}}| \approx \delta^{-2s}$ for $\approx |B|^{2}$ pairs $(b_{2},b_{2}')$, with both $(b_{1},b_{2},b_{3})$ and $(b_{1},b_{2}',b_{3})$ being good triples. Now, one can finally fix $b_{2} \in B$ such that $(b_{1},b_{2},b_{3})$ is a good triple, and
\begin{equation}\label{form114} |G_{b}| := |G_{b_{1},b_{2},b_{3}} \cap G_{b_{1},b,b_{3}}| \approx \delta^{-2s} \end{equation}
for $\approx |B|$ choices of $b \in B$ such that $(b_{1},b,b_{3})$ is a good triple. I denote the set of $b \in B$ satisfying these conditions by $B_{0}$. With \eqref{form13} in mind, write 
\begin{displaymath} c_{b} := \frac{b - b_{1}}{b_{3} - b}, \qquad b \in B_{0}, \end{displaymath}
and abbreviate $c := c_{b_{1}}$ (note that $|c|,|c_{b}| \approx 1$ for all $b \in B_{0}$ by \eqref{separation}). Also, write
\begin{displaymath} D^{1} := D_{b_{1},b_{2},b_{3}}^{1}(\delta) \quad \text{and} \quad D^{2} := D^{2}_{b_{1},b_{2},b_{3}}(\delta), \end{displaymath}
where $R(\delta)$ stands for the $\delta$-neighbourhood of $R \subset \R^{d}$. To complete the proof, I repeat an argument of Bourgain (see p. 219 in \cite{Bo}). Assume for a moment that $x \in cD^{2} \times D^{2} \subset \R^{2}$ and $b \in B_{0}$. Then $\chi_{-G_{b}(\delta) - y}(x) = 1$, whenever 
\begin{displaymath} y \in -G_{b}(\delta) - x \subset -(D^{1} \times D^{2}) - (cD^{2} \times D^{2}) = -(D^{1} + cD^{2}) \times -(D^{2} + D^{2}), \end{displaymath}
(the first inclusion uses \eqref{form112} and \eqref{form114}) and the Lebesgue measure of such choices $y$ is evidently $\calL^{2}(G_{b}(\delta))$. This gives the inequality
\begin{equation}\label{form27} \chi_{cD^{2} + c_{b}D^{2}} \leq \frac{1}{\calL^{2}(G_{b}(\delta))} \int_{-(D^{1} + cD^{2}) \times -(D^{2} + D^{2})} \chi_{\pi_{b_{1},b,b_{3}}(-G_{b}(\delta)) - \pi_{b_{1},b,b_{3}}(y)} \, dy, \quad b \in B_{0}, \end{equation}
by the definition of $\pi_{b_{1},b,b_{3}}$ (see \eqref{pi}). Indeed, if $t \in cD^{2} + c_{b}D^{2} = \pi_{b_{1},b,b_{3}}(cD^{2} \times D^{2})$, then $t = \pi_{b_{1},b,b_{3}}(x)$ for some $x \in cD^{2} \times D^{2}$. As discussed above,
\begin{displaymath} t = \pi_{b_{1},b,b_{3}}(x) \in \pi_{b_{1},b,b_{3}}(-G_{b}(\delta) - y) = \pi_{b_{1},b,b_{3}}(-G_{b}(\delta)) - \pi_{b_{1},b,b_{3}}(y), \end{displaymath}
whenever for $y \in -G_{b}(\delta) - x \subset -(D^{1} + cD^{2}) \times -(D^{2} + D^{2})$, and the set of such points $y$ has measure $\calL^{2}(G_{b}(\delta))$.

Finally, integrating inequality \eqref{form27} and recalling \eqref{form13}, \eqref{form16}, \eqref{form11} and \eqref{form114}, one obtains
\begin{equation}\label{conclusion} \calL^{1}(cD^{2} + c_{b}D^{2}) \lesssim \frac{(N_{1}\delta)(N_{2}\delta)}{\calL^{2}(G_{b}(\delta))} \calL^{1}(\pi_{b_{1},b,b_{2}}(G_{b})) \lessapprox \delta^{1 - s}, \quad b \in B_{0}. \end{equation}
However, $D^{2} \times D^{2}$ is the $\delta$-neighbourhood of a generalised $(\delta,2s)$-set in the plane, so Bourgain's discretized projection theorem, Theorem 5 in \cite{Bo}, can be applied with $\alpha := 2s < 2 =: d$. If $\mu_{1}$ is the normalised counting measure on the set $\{c_{b}/c : b \in B_{0}\}$, then it is not hard to check that $\mu_{1}$ satisfies assumption (0.14) from \cite{Bo} for any $\tau_{0} > 0$ and some $\kappa > 0$ depending only on $\tau$ (using the fact that $B_{0}$ is a $(\delta,\tau)$-set with $|B_{0}| \approx \delta^{-\tau}$; if preferred, this is even easier to check, if one first reduces $B_{0}$ slightly so that the the derivative of $b \mapsto c_{b}$ has absolute value $\approx 1$ uniformly for $b \in B_{0}$). Thus, the conclusion (0.19) of \cite{Bo} states that some $c_{b}/c$ with $b \in B_{0}$ should satisfy $\calL^{1}(D^{2} + (c_{b}/c)D^{2}) \geq \delta^{1 - s - \epsilon_{2}}$ for some constant $\epsilon_{2} > 0$ depending only on $s$ and $\tau$. Recalling that $|c| \approx 1$, this evidently violates \eqref{conclusion}. A contradiction is reached, and the proof of Proposition \ref{productProp} is complete.  \end{proof}


\begin{thebibliography}{1234}
\bibitem{Bo1} \textsc{J. Bourgain}: \emph{On the Erd\H{o}s-Volkmann and Katz-Tao ring conjectures}, Geom. Funct. Anal. \textbf{13} (2003), 334--365
\bibitem{Bo} \textsc{J. Bourgain}: \emph{The discretised sum-product and projection theorems}, J. Anal. Math \textbf{112} (2010), pp. 193--236
%\bibitem{DZ} \textsc{S. Dyatlov and J. Zahl}: \emph{Spectral gaps, additive energy, and a fractal uncertainty principle}, Geom. Funct. Anal. (to appear), arXiv:1504.06589
\bibitem{EE} \textsc{J. Ellenberg and D. Erman}: \emph{Furstenberg sets and Furstenberg schemes over finite fields}, to appear in Algebra Number Theory, available at arXiv:1502.03736
\bibitem{EM} \textsc{G. A. Edgar and C. Miller}: \emph{Borel Subrings of the Reals}, Proc. Amer. Math. Soc. \textbf{131} (4) (2003), 1121--1129
\bibitem{FFJ} \textsc{K. Falconer, J. Fraser and Z. Jin}: \emph{Sixty Years of Fractal Projections}, in Fractal Geometry and Stochastics V, Vol. 70 of Progress in Probability, 3--25
\bibitem{FM} \textsc{K. Falconer and P. Mattila}: \emph{Strong Marstrand theorems and dimensions of sets formed by subsets of hyperplanes}, to appear in J. Fractal Geom., available at arXiv:1503.01284
\bibitem{FO} \textsc{K. F\"assler and T. Orponen}: \emph{On restricted families of projections in $\R^{3}$}, Proc. London Math. Soc. \textbf{109} (2) (2014), 353--381
\bibitem{Fu} \textsc{H. Furstenberg}: \emph{Intersections of Cantor sets and transversality of semigroups}, Problems in Analysis (Sympos. Salomon Bochner, Princeton Univ., Princeton, N.J., 1969), Princeton Univ. Press, Princeton, N.J., 1970, 41--59
%\bibitem{Ho} \textsc{M. Hochman}: \emph{On self-similar sets with overlaps and inverse theorems for entropy}, Ann. of Math. \textbf{180} (2) (2014), 773--822
%\bibitem{KS} \textsc{N. Katz and C.-Y. Shen}: \emph{A slight improvement to Garaev's sum product estimate}, Proc. Amer. Math. Soc. \textbf{136} (7) (2008), 2499--2504
\bibitem{KT} \textsc{N. Katz and T. Tao}: \emph{Some connections between Falconer's distance set conjecture, and sets of Furstenberg type}, New York J. Math. \textbf{7} (2001), pp. 149--187
\bibitem{KLT} \textsc{N. Katz, I. \L aba and T. Tao}: \emph{An improved bound on the Minkowski dimension of Besicovitch sets in $\R^{3}$}, Ann. of Math. \textbf{152} (2000), 383--446
\bibitem{KZ} \textsc{N. Katz and J. Zahl}: \emph{An improved bound on the Hausdorff dimension of Besicovitch sets in $\R^{3}$}, preprint (2017), arXiv:1704.07210
\bibitem{Ka} \textsc{R. Kaufman}: \emph{On Hausdorff dimension of projections}, Mathematika \textbf{15} (1968), 153--155
\bibitem{KM} \textsc{R. Kaufman and P. Mattila}: \emph{Hausdorff dimension and exceptional sets of linear transformations}, Ann. Acad. Sci. Fenn. Ser. A I Math. \textbf{1} (1975), 387--392
\bibitem{Ma} \textsc{J.~M. Marstrand}: \emph{Some fundamental geometrical properties of plane sets of fractional dimensions}, Proc. London Math. Soc.(3), {\bf 4}, (1954), 257--302
\bibitem{Ob} \textsc{D. Oberlin}: \emph{Restricted Radon transforms and projections of planar sets}, published electronically in Canadian Math. Bull. (2014), available at arXiv:0805.1678
\bibitem{Ob2} \textsc{D. Oberlin}: \emph{Some toy Furstenberg sets and projections of the four-corner Cantor set}, Proc. Amer. Math. Soc. \textbf{142} (4) (2013), 1209--2015
\bibitem{O2} \textsc{T. Orponen}: \emph{On the Packing Dimension and Category of Exceptional Sets of Orthogonal Projections}, Ann. Mat. Pura Appl. \textbf{194} (3) (2015), 843--880
\bibitem{O} \textsc{T. Orponen}: \emph{Projections of planar sets in well-separated directions}, Adv. Math. \textbf{144} (8) (2016), 3419--3430
\bibitem{O1} \textsc{T. Orponen}: \emph{Improving Kaufman's exceptional set estimate for packing dimension}, preprint (2016), available at arXiv:1610.06745
%\bibitem{PS} \textsc{Y. Peres and P. Shmerkin}: \emph{Resonance between Cantor sets}, Ergodic Theory Dynam. Systems \textbf{29} (2009), 201--221
\bibitem{Sh} \textsc{P. Shmerkin}: \emph{On Furstenberg's intersection conjecture, self-similar measures, and the $L^{q}$ norms of convolutions}, preprint (2016), arXiv:1609.07802v1
\bibitem{TV} \textsc{T. Tao and V. Vu}: \emph{Additive combinatorics}, Cambridge University Press (2006)
\bibitem{Wo} \textsc{T. Wolff}: \emph{Recent work connected with the Kakeya problem}, in: Prospects in mathematics (Princeton, NJ, 1996), Amer. Math. Soc., Providence, RI, (1999), 129--162
\bibitem{Wu} \textsc{M. Wu}: \emph{A proof of Furstenberg's conjecture on the intersections of $\times p$ and $\times q$-invariant sets}, preprint (2016), arXiv:1609.08053
\bibitem{Zh} \textsc{R. Zhang}: \emph{Polynomials with dense zero sets and discrete models of the Kakeya conjecture and the Furstenberg set problem}, appeared online in Sel. Math. New Ser. (2016), available at arXiv:1403.1352
\bibitem{Zh2} \textsc{R. Zhang}: \emph{On configurations where the Loomis-Whitney inequality is nearly sharp and applications to the Furstenberg set problem}, Mathematika \textbf{61} (1) (2015), 145--161
\end{thebibliography}
\end{document}